\newtheorem{theorem}{Theorem}[section]
\newtheorem{lemma}[theorem]{Lemma}
\newtheorem{definition}[theorem]{Definition}
\newtheorem{proposition}[theorem]{Proposition}
\newtheorem{corollary}[theorem]{Corollary}
\newtheorem{remark}[theorem]{Remark}
\theoremstyle{definition}
\newenvironment{keywords}{{\bf Key words: }}{}
\newenvironment{AMS}{{\bf AMS subject classification: }}{}
\newcommand{\esssup}{\mathop{\mathrm{esssup}}}
\begin{document}

\title{Indefinite Mean-Field Type Linear-Quadratic Stochastic Optimal Control Problems  \footnote{ Foundation: This work is supported by the National Natural Science Foundation of
China (11871310 and 11801317), 
the National Key R\&D Program of China (2018YFA0703900), the Research Grants Council of Hong Kong under grant (15255416 and
15213218), the Colleges and Universities Youth Innovation Technology Program of Shandong Province (2019KJI011), and the PolyU-SDU Joint Research Centre on Financial Mathematics. }}

\author{Na Li\footnote{School of Statistics, Shandong University of Finance and Economics, Jinan 250014, China; email: {\tt naibor@163.com}}, \quad
Xun Li\footnote{Department of Applied Mathematics, The Hong Kong Polytechnic University, Hong Kong, China; email: {\tt malixun@polyu.edu.hk}} \quad and \quad
Zhiyong Yu\footnote{Corresponding author; School of Mathematics, Shandong University, Jinan 250100, China; email: {\tt yuzhiyong@sdu.edu.cn}}}

\maketitle
%
%
%

\begin{abstract}
This paper focuses on indefinite stochastic mean-field
linear-quadratic (MF-LQ, for short) optimal control
problems, which allow the weighting matrices for state and control
in the cost functional to be indefinite. The solvability of
stochastic Hamiltonian system and Riccati equations is presented
under both positive definite case and indefinite case. The optimal
controls in open-loop form and closed-loop form are obtained,
respectively. Moreover, the dynamic mean-variance problem can be
solved within the framework of the indefinite MF-LQ problem. Other
two examples shed light on the theoretical results
established.
\end{abstract}

\begin{keywords}
    Stochastic linear-quadratic problem, Mean-field, Hamiltonian system, Stochastic differential equations, Forward-backward stochastic differential equations,  Riccati equations
\end{keywords}

\begin{AMS}
93E20, 60H10, 49N10
\end{AMS}
\section{Introduction}
Historically, researchers have made many contributions to
McKean-Vlasov type stochastic differential equation (SDE, for short)
(\cite{Ahmed-Ding-1995,Ahmed-2007,Borkar-Kumar-2010,Chan-1994,Crisan-Xiong-2010,Huang-Malhame-Caines-2006,Kotelenez-Kurtz-2010}), which can be regarded as a kind of
mean-field SDE (MF-SDE, for short).  In recent years, stochastic
mean-field optimal control  problems, mean-field differential
games and their applications have attracted researchers'
attention. Andersson and Djehiche \cite{Andersson-Djehiche-2011}, and Buckdahn et al. \cite{Buckdahn-Djehiche-Li-2011} studied the maximum principle for SDEs of mean-field type, respectively. Buckdahn {\it et al.} \cite{Buckdahn-Djehiche-Li-Peng-2009} considered the mean-field backward SDE (MF-BSDE), Bensoussan {\it et al.} \cite{Bensoussan-Yam-Zhang-2015} obtained the unique solvability of mean-field type forward-backward SDE (MF-FBSDE). Recently, Duncan and Tembine \cite{Duncan-Tembine-2018} applied a direct method to discuss an MF-LQ game. Barreiro-Gomez et al. \cite{Barreiro-Duncan-Tembine-2019} investigated an MF-LQ game of jump-diffusion process with regime switching. 
This paper focuses on MF-LQ stochastic optimal control
problems for the indefinite weighting case, which generalize the
work of mean-field type optimal control problems with positive definite weighting case.

For the positive definite case, MF-LQ problems have been studied
widely over the past decade. Yong \cite{Yong-2013} considered an MF-LQ problem with
deterministic coefficients over a finite time horizon, and presented the optimal feedback using
a system of Riccati equations. Recently, there are some related
works following up Yong \cite{Yong-2013}  (see
\cite{Huang-Li-Yong-2015,Li-Sun-Yong-2016,Yong-2017,Wei-Yong-Yu-2019,Sun-Wang-2019}). Different from deterministic LQ problem, in the cost functional, the cost weighting matrices for the state and the control are allowed to be indefinite.  We notice that in the stochastic LQ setting, the cost functional with indefinite cost weighting matrices may still be convex in control. It is precisely this feature that determines whether an optimal control exists. Indefinite stochastic LQ theory has been extensively developed and has lots of interesting applications. Chen et al. \cite{Chen-Li-Zhou-1998} studied a kind of indefinite LQ problem based on Riccati equation. Rami et al. \cite{Rami-Moore-Zhou} showed that the solvability of the generalized Riccati equation is sufficient and necessary condition for the well-posedness of the indefinite LQ problem. Subsequent research includes various cases, and refer to \cite{Kohlmann-Zhou-2000,Qian-Zhou-2013,Sun-2017}.

 One of the motivations for indefinite MF-LQ problems comes from the mean-variance portfolio selection problem. Markowitz initially proposed and solved the mean-variance problem in the single-period setting in his Novel-Prize winning work \cite{Markowitz-1952,Markowitz-1959}, which is an important foundation of the development of modern finance. After Markowitz's pioneering work, the mean-variance model was extended to multi-period/continuous-time portfolio selection. If one wants to solve the mean-variance portfolio selection, she faces to two-objective: One is to minimize the difference between the terminal wealth and its expected value; the other one is to maximize her expected terminal wealth. Since there are two criteria in one cost functional, this stochastic control problem is significantly different from the classic LQ problem. The main reason is due to the variance term
  \[
 \mbox{Var}(X(T))=\mathbb E\big[X(T)-\mathbb E [X(T)]\big]^2
 \]
 essentially, which involves the nonlinear term of $(\mathbb E[X(T)])^2$.
In general, for nonlinear utility function $U(\cdot)$, there exists an essential difference between $\mathbb E[U(X(T))]$ and $U(\mathbb E [X(T)])$, which leads to the fundamental difficulty to deal with the latter one by dynamic programming. Li and Zhou \cite{Li-Zhou-2000} embedded this problem into an auxiliary stochastic LQ problem, which actually is one of indefinite LQ problems. In this paper, we re-visit the continuous-time mean-variance problem using the theoretical results of indefinite MF-LQ problems in a direct way (see the example in Section \ref{example1}).

Besides the dynamic mean-variance portfolio selection problem, there are many phenomena in finance and engineering fields which involve indefinite weighting parameters in the integral term as well as the terminal term. Another motivation is inspired by multi-objective optimization 
problems involving mean value. These problems can be converted into a single-objective problem by putting weights on the different objectives, which essentially are the indefinite mean-field optimization problems. For example, in a moving high-speed train, the controller wants to improve the speed as high as possible. Except for speeding up the train, the controller also wants to improve the resistance to the stochastic disturbance, which means that the state $X(\cdot)$ of train can not deviate too much from the mean value $\mathbb E[X(\cdot)]$. Therefore, there is a tradeoff between two objectives: One is to maximize the total speed $\mathbb E\int_0^T |u(t)|^2dt$, the other one is to minimize the variance over interval $[0,T]$ measured by $\mathbb E\int_0^T|X(t)-\mathbb E[X(t)]|^2dt$. We convert this multi-objective optimization 
problem into a single-objective problem as: 
 \[
 J(u(\cdot))=\mathbb E\int_0^T\Big\{\alpha \big|X(t)-\mathbb E[X(t)]\big|^2-\beta |u(t)|^2\Big\}dt
  \]
with $\alpha,\beta>0$. When the system is linear, this problem is a special case of indefinite MF-LQ problem.

  In literatures about indefinite LQ problem, the standard matrix inverse is involved in the Riccati equation, requiring the related term to be nonsingular. 
However, sometimes, the theory of Riccati equation is abstract and difficult. For example, the global solvability of Riccati equation (in the indefinite case or/and in the stochastic case) is often not simple. For this reason, we want to find another element with flexible restrictions instead of Riccati equation. Based on Yong \cite{Yong-2013} and inspired by Yu \cite{Yu-2013} and Huang and Yu \cite{Huang-Yu-2014}, we generalize the results of positive definite MF-LQ problem to the indefinite case by introducing a {\emph{relaxed compensator}}, which can be regarded as a generalization of the solution of Riccati equation. The presence of the relaxed compensator guarantees the well-posedness of MF-LQ problem. The open-loop and closed-loop optimal controls are also obtained under indefinite case. There are three main contributions of this paper:

\begin{enumerate}[(i)]
	\item Comparing with the solvability of Riccati equations, the relaxed compensator is defined under more flexible conditions (Condition (RC) in Section \ref{sec4}), which is more general.  
\item Based on the linear transformation involving relaxed
compensator, we analyze the unique solvability of a kind of MF-FBSDEs, which
does not satisfy the monotonicity condition in \cite{Bensoussan-Yam-Zhang-2015}.
\item We obtain the existing of relaxed compensator, which is a  sufficient and necessary condition for the solvability of Riccati equations. 
\end{enumerate}

Recently,  Sun \cite{Sun-2017} studied the MF-LQ problem under a uniform convexity condition, and showed that the convergence of a family of uniformly convex cost functionals is equivalent to the open-loop solvability of the MF-LQ problem. Different from the method in  \cite{Sun-2017}, this paper focuses on how to find a relaxed compensator to extend the condition of cost functional from positive case to the indefinite case. 

The rest of this paper is organized as follows. We present some
preliminaries and formulate an MF-LQ problem in Section \ref{sec2}.
Section \ref{sec3} is devoted to studying the MF-LQ
problem under positive definite case. Section \ref{sec4} focuses on
the indefinite MF-LQ problem, and derives the open-loop optimal
control and the optimal feedback control. Section \ref{sec5}
illustrates some applications including the dynamic mean-variance
problem and other two examples.

\section{Problem formulation and preliminaries}\label{sec2}

We denote by $\mathbb R^n$ the $n$-dimensional
Euclidean space. Let $\mathbb R^{n\times m}$ be the set of all
$(n\times m)$ matrices. Let $\mathbb S^n \subset \mathbb R^{n\times
n}$ be the collection of all symmetric matrices. As usual, if a
matrix $A\in \mathbb S^n$ is positive semidefinite (resp. positive
definite; negative semidefinite; negative definite), we denote
$A\geq 0$ (resp. $>0$; $\leq 0$; $<0$). All the positive
semidefinite (resp. negative semidefinite) matrices are collected by
$\mathbb S^n_+$ (resp. $\mathbb S^n_-$). Let $(\Omega, \mathcal F,
\mathbb P, \mathbb F)$ be a complete filtered probability space on
which a one-dimensional standard Brownian motion $W(\cdot)$ is
defined with $\mathbb F\equiv\{\mathcal F_t\}_{t\geq0}$ being its
natural filtration augmented by all $\mathbb P$-null sets.
For simplicity, we will restrict ourselves to the case of one-dimensional standard Brownian motion.
Some extensions to the case with multi-dimensional standard Brownian motion will be similarly derived examples in Section \ref{sec5}.
Let $T>0$ be a finite time horizon. Let $\mathbb H = \mathbb R^n$, $\mathbb R^{n\times m}$, $\mathbb S^n$, $\mathbb S^n_+$, etc.
We introduce the following notation which will be used in the paper:
\begin{itemize}
\item $L^\infty(0,T;\mathbb
H)$ is the space of $\mathbb H$-valued continuous functions
$\varphi(\cdot)$ such that
$\esssup\limits_{t\in[0,T]}|\varphi(t)|<\infty$.
\item $C^1([0,T];\mathbb H)$ is the space of $\mathbb H$-valued functions $ \varphi(\cdot)$ such that $\dot \varphi(\cdot)$ is continuous.
\item $L^2_{\mathcal F_T}(\Omega;\mathbb H)$ is the space of $\mathbb{H}$-valued $\mathcal F_T$-measurable random variables $\xi$ such that
$\mathbb{E}[ |\xi|^2 ] <\infty$.
\item $L^2_{\mathbb F}(0,T;\mathbb H)$ is the space of $\mathbb{H}$-valued $\mathbb F$-progressively measurable processes
$\varphi(\cdot)$ such that $\mathbb E\int_0^T |\varphi(t)|^2 dt
<\infty$.
\item $L^2_{\mathbb F}(\Omega;C([0,T];\mathbb H))$ is the space of
$\mathbb H$-valued $\mathbb F$-progressively measurable processes
$\varphi(\cdot)$ such that for almost all $\omega\in \Omega$,
$r\mapsto \varphi(r,\omega)$ is continuous and $\mathbb
E\left[\sup\limits_{t\in [0,T]} |\varphi(t)|^2\right] <\infty$.
\end{itemize}

\medskip

 Let $ \mathscr U[0,T] \equiv L^2_{\mathbb
F}(0,T;\mathbb R^m) $ denote the set of admissible controls. For any
initial state $x\in \mathbb R^n$ and any admissible control
$u(\cdot)\in \mathscr U[0,T]$, we consider the following controlled
MF-SDE:
\begin{equation}\label{Sec2_Sys}
\left\{
\begin{aligned}
& dX(t) = \Big\{ A(t)X(t) +\widetilde A(t)\mathbb E[X(t)]+B(t)u(t)+\widetilde B(t)\mathbb E [u(t)] \Big\} \\
& \qquad\qquad +\Big\{ C(t)X(t) +\widetilde C(t)\mathbb E [X(t)]+D(t)u(t)+\widetilde D(t)\mathbb E [u(t)] \Big\} dW(t),\quad t\in [0,T],\\
& X(0) = x,
\end{aligned}
\right.
\end{equation}
where $A(\cdot)$, $\widetilde A(\cdot)$, $C(\cdot)$, $\widetilde
C(\cdot) \in L^\infty(0,T;\mathbb R^{n\times n})$ and $B(\cdot)$,
$\widetilde B(\cdot)$, $D(\cdot)$, $\widetilde D(\cdot) \in
L^\infty(0,T;\mathbb R^{n\times m})$. By Proposition
2.6 in Yong \cite{Yong-2013} (see also Proposition 2.1 in
\cite{Yong-2017} and Proposition 2.2 in \cite{Wei-Yong-Yu-2019} for
wider versions), the MF-SDE \eqref{Sec2_Sys} admits a unique
solution $X(\cdot) \equiv X(\cdot;x,u(\cdot)) \in L^2_{\mathbb
F}(\Omega; C([0,T];\mathbb R^n))$. $X(\cdot)$ is called an
admissible trajectory corresponding to $u(\cdot)$, and
$(X(\cdot),u(\cdot))$ is called an admissible pair.

\medskip

Now, we present a cost functional as follows:
\begin{equation}\label{Sec2_Cost}
\begin{aligned}
J\big(x;u(\cdot)\big) =\ & \mathbb E \bigg\{ \int_0^T \Big[
\big\langle Q(t)X(t),\ X(t) \big\rangle +\big\langle \widetilde
Q(t)\mathbb E [X(t)],\ \mathbb E [X(t)] \big\rangle\\
& +2\big\langle S(t)u(t),\ X(t) \big\rangle +2\big\langle \widetilde
S(t)\mathbb E [u(t)],\ \mathbb E [X(t)] \big\rangle\\
& +\big\langle R(t)u(t),\ u(t) \big\rangle +\big\langle \widetilde
R(t)\mathbb E [u(t)],\ \mathbb E [u(t)] \big\rangle \Big] dt\\
& +\big\langle GX(T),\ X(T) \big\rangle +\big\langle \widetilde G
\mathbb E [X(T)],\ \mathbb E [X(T)] \big\rangle \bigg\},
\end{aligned}
\end{equation}
where $Q(\cdot)$, $\widetilde Q(\cdot) \in L^\infty(0,T;\mathbb
S^n)$, $S(\cdot)$, $\widetilde S(\cdot) \in L^\infty(0,T;\mathbb
R^{n\times m})$, $R(\cdot)$, $\widetilde R(\cdot)\in
L^\infty(0,T;\mathbb S^m)$, and $G$, $\widetilde G \in \mathbb S^n$.
It is clear that, for given $x\in\mathbb R^n$ and any
$u(\cdot)\in \mathscr U[0,T]$, $J(x;u(\cdot))$ is well defined.

\medskip

\noindent{\bf Problem (MF-LQ).} We introduce a family of MF-LQ stochastic optimal control problems: find an admissible
control $u^*(\cdot) \in \mathscr U[0,T]$ such that
\begin{equation*}
J\big(x;u^*(\cdot)\big) =\inf_{u(\cdot)\in \mathscr{U}[0,T]} J\big(x;u(\cdot) \big).
\end{equation*}
Problem (MF-LQ) is called {\it well-posed} if the
 infimum of $J\big( x;u(\cdot)
\big)$ over the set of admissible controls is finite.
If Problem (MF-LQ) is well-posed and the infimum of the cost functional is achieved by an
admissible control $u^*(\cdot)$, then Problem (MF-LQ) is said to be
{\it solvable} and $u^*(\cdot)$ is called an {\it optimal control}.
$X^*(\cdot) \equiv X\big(\cdot;x, u^*(\cdot)\big)$ is called the
{\it optimal trajectory} corresponding to $u^*(\cdot)$, and
$(X^*(\cdot), u^*(\cdot))$ is called an {\it optimal pair}.

\medskip

For simplicity, we use the following notation in this paper:
\[
\left\{
\begin{aligned}
& \widehat A(\cdot) = A(\cdot) +\widetilde A(\cdot),\quad \widehat
B(\cdot) = B(\cdot) +\widetilde B(\cdot), \quad \widehat C(\cdot) =
C(\cdot) +\widetilde C(\cdot),\quad \widehat D(\cdot) = D(\cdot)
+\widetilde D(\cdot),\\
& \widehat Q(\cdot) = Q(\cdot) +\widetilde Q(\cdot), \quad \widehat
S(\cdot) = S(\cdot) +\widetilde S(\cdot), \quad~ \widehat R(\cdot) =
R(\cdot) +\widetilde R(\cdot), \quad \widehat G = G +\widetilde G.
\end{aligned}
\right.
\]
Similar to Yong \cite{Yong-2013}, we give another version of
\eqref{Sec2_Sys} and \eqref{Sec2_Cost}. In detail, by taking
expectation $\mathbb E [\cdot]$ on both sides of
\eqref{Sec2_Sys}, we have
\begin{equation}\label{Sec2_Ex}
\left\{
\begin{aligned}
& d \mathbb E [X(t)] = \Big\{ \widehat A(t) \mathbb E [X(t)]
+\widehat B(t) \mathbb
E[u(t)] \Big\} dt,\quad t\in [0,T],\\
& \mathbb E [X(0)] = x.
\end{aligned}
\right.
\end{equation}
Then, the difference between $X(\cdot)$ and $\mathbb E [X(\cdot)]$
satisfies
\begin{equation}\label{Sec2_x-Ex}
\left\{
\begin{aligned}
 d\big( X(t)-\mathbb E [X(t)] \big)& = \Big\{ A(t)
\big(X(t)-\mathbb E [X(t)]\big) +B(t)\big(u(t)-\mathbb
E[u(t)]\big) \Big\} dt\\
& \qquad +\Big\{ C(t)\big(X(t)-\mathbb E [X(t)]\big)
+\widehat C(t)\mathbb E [X(t)] \\
& \qquad +D\big( u(t)-\mathbb E [u(t)] \big) +\widehat
D(t) \mathbb E [u(t)] \Big\} dW(t),\quad t\in [0,T],\\
 X(0)-\mathbb E [X(0)] &=0.
\end{aligned}
\right.
\end{equation}
It is clear that
the system consisting of \eqref{Sec2_x-Ex} and \eqref{Sec2_Ex} is
equivalent to the equation \eqref{Sec2_Sys}. Also, cost functional
\eqref{Sec2_Cost} can be rewritten into the following form
\begin{equation}\label{Sec2_Cost_xEx}
\begin{aligned}
J\big(x;u(\cdot)\big) =\ & \mathbb E \bigg\{ \int_0^T \Big[
\big\langle  Q(t)(X(t)-\mathbb E[X(t)]),X(t)-\mathbb E[X(t)] \big\rangle +\big\langle \widehat{
Q}(t)\mathbb E [X(t)],\ \mathbb E [X(t)] \big\rangle\\
& ~~~~~~~~+2\big\langle S(t)( u(t)-\mathbb E[ u(t)]), X(t)-\mathbb E[ X(t)]\big\rangle +2\big\langle \widehat
S(t)\mathbb E [u(t)],\ \mathbb E [X(t)] \big\rangle\\
& ~~~~~~~~+\big\langle R(t)( u(t)-\mathbb E[ u(t)]), u(t)-\mathbb E[ u(t)]\big\rangle +\big\langle \widehat
R(t)\mathbb E [ u(t)],\ \mathbb E [u(t)] \big\rangle \Big] dt\\
& ~~~~~~~~+\big\langle G(X(T)-\mathbb E[X(T)],\ X(T)-\mathbb E[X(T)] \big\rangle +\big\langle \widehat G
\mathbb E [X(T)],\ \mathbb E [X(T)] \big\rangle \bigg\}.
\end{aligned}
\end{equation}

For convenience, we
introduce the following notation:

\[
\left\{
\begin{aligned}
& \mathbf Q(t) = \left( \begin{array}{ccc} Q(t) & O\\ O & \widehat
Q(t)
\end{array} \right), \quad \mathbf S(t) = \left( \begin{array}{ccc} S(t) & O\\ O &
\widehat S(t)
\end{array} \right),\\
& \mathbf R(t) = \left( \begin{array}{ccc} R(t) & O\\ O & \widehat
R(t)
\end{array} \right), \quad \mathbf G = \left( \begin{array}{ccc} G & O\\ O &
\widehat G
\end{array} \right),
\end{aligned}
\right.
\]
where $O$ denotes zero matrices with appropriate
dimensions.
\medskip

For an $\mathbb S^n$-valued process $f(\cdot)$, if
$f(t) \geq 0$ (resp. $>0$; $\leq 0$; $<0$) for almost everywhere
$t\in [0,T]$, then we denote $f(\cdot)\geq 0$ (resp. $>0$; $\leq 0$;
$<0$). Moreover, if there exists a constant $\delta>0$ such that
$f(\cdot) -\delta I_n \geq 0$ (resp. $f(\cdot)+\delta I_n \leq 0$),
then we denote $f(\cdot) \gg 0$ (resp. $f(\cdot)\ll 0$), where $I_n$
denotes the $(n\times n)$ identity matrix. Now, for a given
quadruple of $(\mathbf Q(\cdot), \mathbf S(\cdot), \mathbf R(\cdot),
\mathbf G)$, we introduce a positive definite (PD, for short)
condition:

\medskip

\noindent{\bf Condition (PD).}
$
\left( \begin{array}{ccc} \mathbf Q(\cdot) & \mathbf S(\cdot) \\ \mathbf
S(\cdot)^\top & \mathbf R(\cdot)
\end{array} \right) \geq 0, \quad \mathbf R(\cdot) \gg 0, \quad
\mathbf G \geq 0,\qquad t\in [0,T].
$

\medskip

\noindent Here and hereafter, we use the superscript $\top$ to
denote the transpose of a matrix (or a vector).

\begin{remark}\label{Sec2_Rem}
It is clear that, if $(\mathbf Q(\cdot), \mathbf S(\cdot), \mathbf
R(\cdot), \mathbf G)$ satisfies Condition (PD), then we have
$J(x;u(\cdot)) \geq 0$ for any $x\in \mathbb R^n$ and
any $u(\cdot)\in\mathscr{U}[0,T]$. Hence, Problem (MF-LQ) is
well-posed.
\end{remark}

\medskip
\section{Problem (MF-LQ) in Positive Definite Case}\label{sec3}
In this section, we study this problem under Condition (PD). Now we turn our
attention to the issue of the solvability of Problem (MF-LQ).
Firstly, we consider the solvability in the open-loop form. For
simplicity of notation, we introduce a couple of linear functions:
for any $t\in [0,T]$, any $\theta = (x,u,y,z)$ and $\tilde\theta =
(\tilde x,\tilde u,\tilde y,\tilde z) \in \mathbb
R^{n+m+n+n}$, we define
\begin{equation}\label{Sec2.O_gPsi}
\left\{
\begin{aligned}
g(t,\theta, \tilde \theta) =\ & Q(t)x +\widetilde Q(t)\tilde
x +S(t)u +\widetilde S(t) \tilde u \\
& + A(t)^\top y +\widetilde A(t)^\top \tilde y +C^\top(t) z
+\widetilde C(t)^\top \tilde
z,\\
\Psi(t,\theta,\tilde\theta) =\ & S(t)^\top x +\widetilde
S(t)^\top \tilde x + R(t)u +\widetilde R(t)\tilde u\\
& +B(t)^\top y +\widetilde B(t)^\top \tilde y +D(t)^\top z
+\widetilde D(t)^\top \tilde z.
\end{aligned}
\right.
\end{equation}

\begin{lemma}\label{Sec2.O_Lem_Necessity} \sl
Let $(X^*(\cdot), u^*(\cdot))$ be an
optimal pair of Problem (MF-LQ) with initial state $x\in\mathbb R^n$. Let
$(Y(\cdot),Z(\cdot)) \in L^2_{\mathbb F}(\Omega; C([0,T];\mathbb
R^n)) \times L^2_{\mathbb F}(0,T;\mathbb R^n)$ be the unique
solution to the following mean-field backward stochastic
differential equation (MF-BSDE, for short):
\begin{equation}\label{Sec2.O_MF-BSDE}
\left\{
\begin{aligned}
& dY(t) = -g\big(t, \Theta^*(t), \mathbb E [\Theta^*(t)]\big) dt +Z(t) dW(t),\quad t\in [0,T],\\
& Y(T) = GX^*(T) +\widetilde G\mathbb E [X^*(T)],
\end{aligned}
\right.
\end{equation}
where $\Theta^*(\cdot) = (X^*(\cdot),u^*(\cdot),Y(\cdot),Z(\cdot))$
and $\mathbb E [\Theta^*(\cdot)] = (\mathbb E [X^*(\cdot)],
\mathbb E [u^*(\cdot)], \mathbb E [Y(\cdot)], \mathbb
E[Z(\cdot)])$. Then the following stationarity condition holds:
\begin{equation}
\Psi \big(t, \Theta^*(t), \mathbb E [\Theta^*(t)]\big) =0,\quad
t\in [0,T].
\end{equation}
\end{lemma}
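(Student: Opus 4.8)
The plan is to derive the stationarity condition by a standard variational (spike/convex perturbation) argument. Since the cost functional $J(x;\cdot)$ is quadratic in $u(\cdot)$, for any $u(\cdot)\in\mathscr U[0,T]$ and $\varepsilon\in\mathbb R$ consider the perturbed control $u^\varepsilon(\cdot)=u^*(\cdot)+\varepsilon(u(\cdot)-u^*(\cdot))$, or more simply $u^\varepsilon(\cdot)=u^*(\cdot)+\varepsilon v(\cdot)$ for an arbitrary $v(\cdot)\in\mathscr U[0,T]$. Let $X^\varepsilon(\cdot)$ be the corresponding trajectory; by linearity of \eqref{Sec2_Sys} in $(X,\mathbb E[X],u,\mathbb E[u])$, the variation $\xi(\cdot):=\lim_{\varepsilon\to0}\frac{X^\varepsilon(\cdot)-X^*(\cdot)}{\varepsilon}$ is precisely the solution of the linear MF-SDE obtained by replacing $X$ with $\xi$, $u$ with $v$, and dropping the $X(0)=x$ inhomogeneity (so $\xi(0)=0$). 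Optimality of $u^*(\cdot)$ gives $\frac{d}{d\varepsilon}J(x;u^\varepsilon(\cdot))\big|_{\varepsilon=0}=0$, and computing this derivative from \eqref{Sec2_Cost} yields
\begin{equation*}
\begin{aligned}
0=\mathbb E\bigg\{\int_0^T\Big[&\big\langle Q X^*+\widetilde Q\,\mathbb E[X^*]+S u^*+\widetilde S\,\mathbb E[u^*],\ \xi\big\rangle
+\big\langle S^\top X^*+\widetilde S^\top\mathbb E[X^*]+R u^*+\widetilde R\,\mathbb E[u^*],\ v\big\rangle\Big]dt\\
&+\big\langle GX^*(T)+\widetilde G\,\mathbb E[X^*(T)],\ \xi(T)\big\rangle\bigg\},
\end{aligned}
\end{equation*}
where I have used the symmetry of $Q,\widetilde Q,R,\widetilde R,G,\widetilde G$ and the elementary identity $\mathbb E\langle M\,\mathbb E[X^*],\xi\rangle=\mathbb E\langle M\,\mathbb E[X^*],\mathbb E[\xi]\rangle$ to regroup the mean-field terms.

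The next step is to eliminate $\xi(\cdot)$ by pairing it against the adjoint process $(Y(\cdot),Z(\cdot))$ solving \eqref{Sec2.O_MF-BSDE}. Applying Itô's formula to $\langle Y(t),\xi(t)\rangle$ on $[0,T]$, using the dynamics of $\xi$ (drift and diffusion as described above) and of $Y$ (drift $-g(t,\Theta^*(t),\mathbb E[\Theta^*(t)])$, diffusion $Z$), and taking expectations, the stochastic integral terms vanish and one obtains an identity expressing $\mathbb E\langle Y(T),\xi(T)\rangle=\mathbb E\langle GX^*(T)+\widetilde G\,\mathbb E[X^*(T)],\xi(T)\rangle$ in terms of $\int_0^T$ of pairings involving $g$, $\Psi$, $\xi$, $v$, $Y$, $Z$. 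Here one must be careful with the mean-field structure: terms like $\langle \widehat C(t)\mathbb E[X^*(t)],Z(t)\rangle$ arising from the diffusion of $\xi$ must be matched against the $\widetilde C(t)^\top\mathbb E[Z(t)]$ contributions hidden in $g(t,\Theta^*(t),\mathbb E[\Theta^*(t)])$, again invoking $\mathbb E\langle M\,\mathbb E[X^*],Z\rangle=\mathbb E\langle M^\top\mathbb E[Z],X^*\rangle$ and analogous swaps. After this bookkeeping, substituting the Itô identity into the optimality relation above causes all the $\xi$-terms to cancel by the very definition of $g$, leaving
\begin{equation*}
0=\mathbb E\int_0^T\big\langle\Psi\big(t,\Theta^*(t),\mathbb E[\Theta^*(t)]\big),\ v(t)\big\rangle\,dt
\end{equation*}
for every $v(\cdot)\in\mathscr U[0,T]$, whence $\Psi(t,\Theta^*(t),\mathbb E[\Theta^*(t)])=0$ for a.e.\ $t\in[0,T]$, $\mathbb P$-a.s.

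I expect the main obstacle to be the careful matching of the mean-field cross terms in the Itô expansion: the adjoint equation \eqref{Sec2.O_MF-BSDE} carries both $\Theta^*$ and $\mathbb E[\Theta^*]$ through $g$, and the variational equation for $\xi$ carries $\widehat C\,\mathbb E[X^*]$ and $\widehat D\,\mathbb E[u^*]$ in its diffusion (per \eqref{Sec2_x-Ex}); one must repeatedly use the self-adjointness of conditional expectation, $\mathbb E\langle a,\mathbb E[b]\rangle=\mathbb E\langle\mathbb E[a],b\rangle$, to realign every $\widetilde{(\cdot)}$ and $\widehat{(\cdot)}$ term so that the cancellation against $g$ is exact. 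The rest — differentiating the quadratic cost, solvability of the linear variational MF-SDE (by the cited Proposition 2.6 of Yong) and of the MF-BSDE \eqref{Sec2.O_MF-BSDE} — is routine. One also notes that convexity is not needed for necessity: the first-order condition follows purely from $u^*$ being a minimizer along every one-parameter affine family.
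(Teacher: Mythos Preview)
Your proposal is correct and follows essentially the same route as the paper: perturb $u^*$ along $u^*+\varepsilon v$, apply It\^o's formula to $\langle Y,\xi\rangle$ (the paper writes this as $\langle \Delta X,Y\rangle$ with $\Delta X=X-X^*$), use the swap $\mathbb E\langle a,\mathbb E[b]\rangle=\mathbb E\langle\mathbb E[a],b\rangle$ to align the mean-field terms, and read off $\mathbb E\int_0^T\langle v,\Psi\rangle\,dt=0$ from first-order optimality. The only cosmetic difference is that the paper records the full quadratic identity $J(x;u)-J(x;u^*)=J(x;\Delta u)+2\,\mathbb E\int_0^T\langle\Delta u,\Psi\rangle\,dt$ along the way (equation \eqref{Sec2.O_Lem_Eq1}), which it reuses for sufficiency in Theorem~\ref{Sec2.O_THM_Solution}; also, a small slip in your last paragraph---the diffusion of the variational equation for $\xi$ carries $\widetilde C\,\mathbb E[\xi]$ and $\widetilde D\,\mathbb E[v]$, not $\widehat C\,\mathbb E[X^*]$ and $\widehat D\,\mathbb E[u^*]$.
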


\begin{proof}
By Proposition 2.6 in \cite{Wei-Yong-Yu-2019}, MF-BSDE
\eqref{Sec2.O_MF-BSDE} admits a unique solution
\[
(Y(\cdot),Z(\cdot)) \in L^2_{\mathbb F}(\Omega; C([0,T];\mathbb
R^n)) \times L^2_{\mathbb F}(0,T;\mathbb R^n).
\]
Besides the optimal pair $(X^*(\cdot),u^*(\cdot))$, we consider
also another arbitrary admissible pair $(X(\cdot), u(\cdot))$.
Let
\[
\Delta X(\cdot) = X(\cdot) -X^*(\cdot),\qquad \Delta u(\cdot) =
u(\cdot) -u^*(\cdot).
\]
Then $\Delta X(\cdot)$ satisfies the following MF-SDE:
\[
\left\{
\begin{aligned}
& d\Delta X = \Big\{ A\Delta X +\widetilde A \mathbb E [\Delta X]+B\Delta u +\widetilde B\mathbb E [\Delta u]\Big\}dt \\
& \qquad\quad +\Big\{ C\Delta X +\widetilde C \mathbb E [\Delta X]+D\Delta u +\widetilde D\mathbb E [\Delta u] \Big\}dW(t), \quad t\in [0,T], \\
& \Delta X(0) =0,
\end{aligned}
\right.
\]
which is in the form of  MF-SDE \eqref{Sec2_Sys} with the initial state $\Delta X(0) =0$. By
applying It\^{o}'s formula to $\langle \Delta X(\cdot), Y(\cdot)
\rangle$ on the interval $[0,T]$ and taking expectation, we have
\[
\begin{aligned}
& \mathbb E  \bigg\{ \int_0^T \Big[ \big\langle QX^*,\ \Delta X
\big\rangle +\big\langle \widetilde Q\mathbb E [X^*],\ \mathbb
E[\Delta X] \big\rangle +\big\langle Su^*,\ \Delta X \big\rangle
+\big\langle \widetilde S\mathbb E [u^*],\ \mathbb
E[\Delta X] \big\rangle \Big] dt\\
& +\big\langle GX^*(T),\ \Delta X(T) \big\rangle +\big\langle
\widetilde G \mathbb E [X^*(T)],\ \mathbb E [\Delta
X(T)]\big\rangle \bigg\}\\
=\ & \mathbb E \int_0^T \big\langle \Delta u,\ B^\top Y +\widetilde
B^\top \mathbb E [Y] +D^\top Z +\widetilde D^\top\mathbb E [Z]
\big\rangle dt.
\end{aligned}
\]
Adding $\mathbb E  \int_0^T [ \langle S\Delta u,\ X^* \rangle
+\langle \widetilde S\mathbb E [\Delta u],\ \mathbb E [X^*]
\rangle + \langle Ru^*,\ \Delta u \rangle +\langle \widetilde
R\mathbb E [u^*],\ \mathbb E [\Delta u] \rangle] dt$ on both sides
of the above equation leads to
\[
\begin{aligned}
& \mathbb E  \bigg\{ \int_0^T \Big[ \big\langle QX^*,\ \Delta X
\big\rangle +\big\langle \widetilde Q\mathbb E [X^*],\ \mathbb
E[\Delta X] \big\rangle +\big\langle Su^*,\ \Delta X
\big\rangle +\big\langle S\Delta u,\ X^* \big\rangle\\
& \qquad +\big\langle \widetilde S\mathbb E [u^*],\ \mathbb
E[\Delta X] \big\rangle +\big\langle \widetilde S \mathbb
E[\Delta u], \mathbb E [X^*] \big\rangle +\big\langle Ru^*,\
\Delta u \big\rangle +\big\langle \widetilde R\mathbb E [u^*],\
\mathbb
E[\Delta u] \big\rangle \Big] dt\\
& \qquad +\big\langle GX^*(T),\ \Delta X(T) \big\rangle +\big\langle
\widetilde G \mathbb E [X^*(T)],\ \mathbb E [\Delta
X(T)]\big\rangle \bigg\} = \mathbb E \int_0^T \big\langle \Delta
u,\ \Psi \big(\Theta^*, \mathbb E [\Theta^*]\big) \big\rangle dt.
\end{aligned}
\]
We note that $\langle QX, \ X\rangle -\langle QX^*,\ X^* \rangle =
\langle Q\Delta X,\ \Delta X \rangle +2\langle QX^*,\ \Delta X
\rangle$, $\langle Su,\ X\rangle - \langle Su^*,\ X^* \rangle =
\langle S\Delta u,\ \Delta X \rangle +\langle Su^*,\ \Delta X
\rangle +\langle S\Delta u,\ X^* \rangle$ and so on. Using the above equation, we reduce the difference between
$J(x;u(\cdot))$ and $J(x;u^*(\cdot))$ to
\begin{equation}\label{Sec2.O_Lem_Eq1}
J\big(x;u(\cdot) \big) -J\big(x;u^*(\cdot) \big) =
J\big(x;\Delta u(\cdot) \big) +2\mathbb E \int_0^T \big\langle\Delta u,\ \Psi \big(t, \Theta^*(t), \mathbb E [\Theta^*(t)]\big)\big\rangle dt.
\end{equation}
Hence, for any $\alpha\in\mathbb{R}$ and any $u(\cdot) \in
\mathscr{U}[0,T]$, we have
\[
J\big(x;u^*(\cdot) +\alpha u(\cdot) \big) -J\big(x;
u^*(\cdot) \big)= \alpha^2 J\big(x;u(\cdot) \big) +2\alpha \mathbb
E \int_0^T \big\langle u, \  \Psi \big(t, \Theta^*(t), \mathbb
E[\Theta^*(t)]\big) \big\rangle dt.
\]
Since $u^*(\cdot)$ is optimal, the above equation implies
\[
\mathbb E \int_0^T \big\langle u(t),\ \Psi \big(t, \Theta^*(t),
\mathbb E [\Theta^*(t)]\big) \big\rangle dt =0,\quad \mbox{for all
} u(\cdot) \in \mathscr U[0,T],
\]
therefore $\Psi \big(\cdot, \Theta^*(\cdot), \mathbb
E[\Theta^*(\cdot)]\big) =0$. We complete the proof.
\end{proof}

Denote
\begin{equation*}
M^2_{\mathbb F}(0,T) = L^2_{\mathbb F}(\Omega;C([0,T];\mathbb R^n))
\times \mathscr U[0,T] \times L^2_{\mathbb F}(\Omega;C([0,T];\mathbb
R^n)) \times L^2_{\mathbb F}(0,T;\mathbb R^n).
\end{equation*}

\begin{theorem}\label{Sec2.O_THM_Solution} \sl
Assume that the quadruple $(\mathbf Q(\cdot), \mathbf S(\cdot),
\mathbf R(\cdot), \mathbf G)$ satisfies Condition (PD). Then, for a
given $x\in \mathbb R^n$, the following stochastic Hamiltonian
system
\begin{equation}\label{Sec2.O_Hamil_Sys}
\left\{
\begin{aligned}
& 0 = \Psi\big( \Theta^*, \mathbb E [\Theta^*] \big),\quad t\in [0,T],\\
& dX^* = \Big\{ AX^* +\widetilde A \mathbb E [X^*] +Bu^* + \widetilde B\mathbb E [u^*] \Big\}dt \\
& \qquad\quad +\Big\{ CX^* +\widetilde C\mathbb E [X^*] + Du^* + \widetilde D\mathbb E [u^*] \Big\}dW(t), \quad t\in [0,T],\\
& dY = -g\big(\Theta^*, \mathbb E [\Theta^*]\big) dt + ZdW(t), \quad t\in [0,T],\\
& X^*(0) = x,\quad Y(T) = GX^*(T) +\widetilde G\mathbb E[X^*(T)]
\end{aligned}
\right.
\end{equation}
admits a unique solution $\Theta^*(\cdot) \in M^2_{\mathbb F}(0,T)$.
Moreover, $(X^*(\cdot),u^*(\cdot))$ is the unique optimal pair of
Problem (MF-LQ).
\end{theorem}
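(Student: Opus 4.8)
The plan is to derive everything from one fact: Condition (PD) forces the cost functional $v\mapsto J(x;v)$ to be \emph{uniformly convex} on the Hilbert space $\mathscr U[0,T]$. Granting this, a unique optimal control $u^*$ exists by standard convex analysis; feeding the optimal pair into Lemma~\ref{Sec2.O_Lem_Necessity} produces a solution of the Hamiltonian system \eqref{Sec2.O_Hamil_Sys}; and running the identity \eqref{Sec2.O_Lem_Eq1} in the reverse direction shows that \emph{any} solution of \eqref{Sec2.O_Hamil_Sys} is an optimal pair, so that uniqueness of $u^*$ upgrades to uniqueness of the Hamiltonian system solution.

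\emph{Step 1 (uniform convexity — the technical core).} For $x=0$ the trajectory $X(\cdot;0,v)$ depends linearly and homogeneously on $v$ (linear MF-SDE with zero initial value), so $v\mapsto J(0;v)$ is a pure quadratic form, bounded above by $C\|v\|_{\mathscr U}^2$ via the standard a priori estimate for \eqref{Sec2_Sys}. Writing $J(0;v)$ in the block form \eqref{Sec2_Cost_xEx} and completing the square in the running cost: by $\mathbf R(\cdot)\gg0$ there is $\delta>0$ with $\mathbf R(t)\ge\delta I$, and the block matrix in Condition (PD) is $\ge0$, so its Schur complement $\mathbf Q(t)-\mathbf S(t)\mathbf R(t)^{-1}\mathbf S(t)^\top\ge0$; hence the running-cost integrand is bounded below by $\delta\,\big|\mathbf v(t)+\mathbf R(t)^{-1}\mathbf S(t)^\top\mathbf X(t)\big|^2$, where $\mathbf X(t)=\big(X(t)-\mathbb E[X(t)],\ \mathbb E[X(t)]\big)$ and $\mathbf v(t)=\big(v(t)-\mathbb E[v(t)],\ \mathbb E[v(t)]\big)$. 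Combined with $\mathbf G\ge0$ this gives
\[
J(0;v)\ \ge\ \delta\,\mathbb E\int_0^T\big|\mathbf v(t)+\mathbf R(t)^{-1}\mathbf S(t)^\top\mathbf X(t)\big|^2\,dt .
\]
Write $\tilde v$ for the process inside the norm; then $v$ equals $\tilde v$ plus a bounded linear feedback of $X(\cdot)$ and $\mathbb E[X(\cdot)]$, so $X$ solves a linear MF-SDE of type \eqref{Sec2_Sys} with forcing $\tilde v$ and zero initial value. The a priori estimate for such equations (boundedness of all coefficients and Gronwall) gives $\|\mathbf X\|_{L^2}\le C_1\|\tilde v\|_{L^2}$, hence $\|v\|_{\mathscr U}\le(1+C_2)\|\tilde v\|_{\mathscr U}$, and therefore $J(0;v)\ge\lambda\|v\|_{\mathscr U}^2$ with $\lambda:=\delta/(1+C_2)^2>0$. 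Finally, for general $x$ one has $J(x;v)=J(0;v)+2\ell_x(v)+c_x$ with $\ell_x$ linear (its coefficients built from the uncontrolled trajectory $X(\cdot;x,0)$) and $c_x$ constant, so $J(x;\cdot)$ inherits uniform convexity for every $x$.

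\emph{Step 2 (existence and uniqueness).} A continuous, strictly convex, coercive functional on the Hilbert space $\mathscr U[0,T]$ attains its infimum at a unique point $u^*$; set $X^*=X(\cdot;x,u^*)$ and let $(Y,Z)$ be the unique solution of the adjoint MF-BSDE \eqref{Sec2.O_MF-BSDE}. By Lemma~\ref{Sec2.O_Lem_Necessity}, $\Psi\big(t,\Theta^*(t),\mathbb E[\Theta^*(t)]\big)=0$, so $\Theta^*=(X^*,u^*,Y,Z)$ satisfies all four relations in \eqref{Sec2.O_Hamil_Sys}; this is existence. For uniqueness, let $\bar\Theta=(\bar X,\bar u,\bar Y,\bar Z)$ be any solution of \eqref{Sec2.O_Hamil_Sys}. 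Since $\Psi\big(\cdot,\bar\Theta,\mathbb E[\bar\Theta]\big)=0$, the identity \eqref{Sec2.O_Lem_Eq1} — whose derivation uses only that $(\bar X,\bar u)$ is an admissible pair and $(\bar Y,\bar Z)$ solves the corresponding MF-BSDE — yields $J\big(x;u(\cdot)\big)-J\big(x;\bar u(\cdot)\big)=J\big(x;(u-\bar u)(\cdot)\big)\ge0$ for every admissible $u$, the last inequality by Remark~\ref{Sec2_Rem}. Hence $\bar u$ is optimal, so $\bar u=u^*$ by uniqueness of the minimizer; then $\bar X=X^*$, and $(\bar Y,\bar Z)=(Y,Z)$ by uniqueness for \eqref{Sec2.O_MF-BSDE}. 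Therefore \eqref{Sec2.O_Hamil_Sys} has the unique solution $\Theta^*$, and $(X^*,u^*)$ is the unique optimal pair.

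\emph{Main obstacle.} The only step that is not soft is Step 1: upgrading the pointwise matrix inequality provided by Condition (PD) to a genuine coercivity estimate $J(0;v)\ge\lambda\|v\|_{\mathscr U}^2$ for the quadratic form $J(0;\cdot)$. The delicate point is controlling $\|v\|_{\mathscr U}$ by the completed-square quantity $\|\tilde v\|_{\mathscr U}$ despite the state feedback hidden in $\tilde v$, which leans on the well-posedness and a priori bounds for linear MF-SDEs. An alternative is to eliminate $u^*$ from $\Psi(\Theta^*,\mathbb E[\Theta^*])=0$ using invertibility of $R(\cdot)$ and $\widehat R(\cdot)$ — a consequence of $\mathbf R(\cdot)\gg0$ — and then solve the resulting coupled MF-FBSDE in $(X^*,Y,Z)$ by a monotonicity/continuation method; but checking the monotonicity condition there is, under Condition (PD), essentially the same estimate as the uniform convexity above, so the difficulty is unchanged.
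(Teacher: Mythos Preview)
Your proof is correct but follows a route genuinely different from the paper's. The paper outsources the hard part --- unique solvability of the Hamiltonian system \eqref{Sec2.O_Hamil_Sys} --- to Theorem~3.4 of \cite{Wei-Yong-Yu-2019}, and then deduces optimality of $(X^*,u^*)$ from the identity \eqref{Sec2.O_Lem_Eq1} together with $\Psi=0$ and $J(0;\Delta u)\ge0$; uniqueness of the optimal pair is then inferred \emph{from} uniqueness of the Hamiltonian system via Lemma~\ref{Sec2.O_Lem_Necessity}. You reverse the logical flow: your Step~1 establishes uniform convexity of $J(x;\cdot)$ directly from Condition~(PD) through a Schur complement and a feedback/a~priori-estimate argument, so the unique minimizer $u^*$ exists by convex analysis; Lemma~\ref{Sec2.O_Lem_Necessity} then supplies existence for \eqref{Sec2.O_Hamil_Sys}, and you deduce uniqueness of the Hamiltonian system \emph{from} uniqueness of the minimizer by running \eqref{Sec2.O_Lem_Eq1} in the other direction. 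Your approach is more self-contained --- it does not invoke the external MF-FBSDE solvability result --- and it makes explicit the coercivity estimate that is, as you note, essentially the same structural input underlying the monotonicity method in \cite{Wei-Yong-Yu-2019}; the paper's approach is shorter on the page but less transparent about why Condition~(PD) is exactly what is needed. A minor notational point (shared with the paper): in the identity \eqref{Sec2.O_Lem_Eq1} the term written $J(x;\Delta u)$ is really $J(0;\Delta u)$, since $\Delta X(0)=0$; this does not affect either argument.
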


\begin{proof}
Under Condition (PD), from Theorem 3.4 in \cite{Wei-Yong-Yu-2019},
the Hamiltonian system \eqref{Sec2.O_Hamil_Sys} admits a unique
solution $\Theta^*(\cdot) =
(X^*(\cdot),u^*(\cdot),Y(\cdot),Z(\cdot))$. Now, we prove that
$(X^*(\cdot), u^*(\cdot))$ is an optimal pair of Problem (MF-LQ).
For any another admissible pair $(X(\cdot), u(\cdot))$, we adopt the
notation and the derivation procedure of Lemma
\ref{Sec2.O_Lem_Necessity}. Precisely, we start from
\eqref{Sec2.O_Lem_Eq1}. It is clear that $J(x;\Delta
u(\cdot)) \geq 0$ and $\Psi (\cdot, \Theta^*(\cdot), \mathbb
E[\Theta^*(\cdot)]) =0$. Therefore,
\[
J\big(x;u(\cdot) \big) -J\big(x;u^*(\cdot)\big) \geq 0.
\]
Due to the arbitrariness of $u(\cdot)$, we prove the optimality of $u^*(\cdot)$.

Now, we turn to the uniqueness of the optimal control. Let $(\bar
X(\cdot),\bar u(\cdot)) \in L^2_{\mathbb F}(\Omega;C([0,T];\mathbb
R^n)) \times \mathscr U[0,T]$ be another optimal pair. By Lemma
\ref{Sec2.O_Lem_Necessity}, there exists a pair of processes $(\bar
Y(\cdot),\bar Z(\cdot)) \in L^2_{\mathbb F}(\Omega;C([0,T];\mathbb
R^n)) \times L^2_{\mathbb F}(0,T;\mathbb R^n)$ such that the
quadruple $\bar\Theta(\cdot) = (\bar X(\cdot), \bar u(\cdot), \bar
Y(\cdot), \bar Z(\cdot))$ also solves Hamiltonian system
\eqref{Sec2.O_Hamil_Sys}. By the uniqueness of
\eqref{Sec2.O_Hamil_Sys}, we obtain $(\bar X(\cdot),\bar u(\cdot)) =
(X^*(\cdot),u^*(\cdot))$. This implies the desired result.
\end{proof}

In the rest of this section, we derive the solvability of the
corresponding Riccati equations to construct a feedback form of the
optimal control $u^*(\cdot)$. For simplicity of notation, let us
define $\Gamma: [0,T] \times\mathbb{S}^n \rightarrow
\mathbb{R}^{m\times n}$ and $\widehat \Gamma: [0,T] \times
\mathbb{S}^n \times \mathbb{S}^n \rightarrow \mathbb{R}^{m\times n}$ by
\begin{equation}\label{Sec2.F_Gamma}
\left\{\begin{aligned}
& \Gamma\big(t, P\big) = - \big[D(t)^\top PD(t) +R(t)\big]^{-1}
\big[PB(t) +C(t)^\top
PD(t) +S(t)\big]^\top,\\
& \widehat\Gamma\big(t, P, \widehat P\big) = - \big[ \widehat D(t)^\top P
\widehat D(t) +\widehat R(t) \big]^{-1} \big[ \widehat P \widehat B(t)
+\widehat C(t)^\top P \widehat D(t) +\widehat S(t) \big]^\top.
\end{aligned}\right.
\end{equation}

\begin{theorem}\label{Sec2.F_THM_Solution} \sl
Assume that the quadruple $(\mathbf Q(\cdot), \mathbf S(\cdot),
\mathbf R(\cdot), \mathbf G)$ satisfies Condition (PD). Then the
following (decoupled) system of Riccati equations (with $t$
suppressed)
\begin{equation}\label{Sec2.F_Riccati_Eq1}
\left\{
\begin{aligned}
& \dot P +PA +A^\top P +C^\top PC +Q -\Gamma(P)^\top \big[D^\top PD +R\big] \Gamma(P) = 0,\quad t\in [0,T],\\
& P(T) = G,\\
& D^\top PD +R \gg 0, \qquad t\in [0,T]
\end{aligned}
\right.
\end{equation}
and
\begin{equation}\label{Sec2.F_Riccati_Eq2}
\left\{
\begin{aligned}
& \dot {\widehat P} +\widehat P \widehat A +\widehat A^\top \widehat P +\widehat C^\top P
\widehat C +\widehat Q -\widehat \Gamma(P, \widehat P)^\top \big[ \widehat
D^\top P \widehat D +\widehat R \big] \widehat\Gamma(P, \widehat P)
= 0,\quad t\in [0,T],\\
& \widehat P (T) = \widehat G, \\
& \widehat D^\top P\widehat D + \widehat R \gg 0,\qquad t\in [0,T]
\end{aligned}
\right.
\end{equation}
admits a unique pair of solutions $(P(\cdot), \widehat P(\cdot))$ taking
values in $\mathbb S^n_+ \times \mathbb S^n_+$. Moreover, for a given
$x\in\mathbb R^n$, the unique optimal  control $u^*(\cdot)$ of
Problem (MF-LQ) has the following feedback
form:
\begin{equation}\label{Sec2.F_Optimal_Control}
u^* = \Gamma\big(P\big) \big( X^* -\mathbb E [X^*] \big)
+\widehat\Gamma\big(P,\widehat P\big) \mathbb E [X^*],\quad t\in [0,T],
\end{equation}
where $X^*(\cdot)$ is determined by
\begin{equation}\label{Sec2.F_Optimal_State}
\left\{
\begin{aligned}
& dX^* = \Big\{ \big( A+B\Gamma(P) \big) \big( X^* -\mathbb E [X^*]\big) +\big( \widehat A +\widehat B\widehat\Gamma(P,\widehat P) \big)\mathbb E[X^*]\Big\} dt\\
& \qquad\quad +\Big\{ \big( C+D\Gamma(P) \big) \big( X^* -\mathbb E[X^*] \big) +\big( \widehat C +\widehat D\widehat\Gamma(P,\widehat P)\big)\mathbb E[X^*] \Big\}dW(t), \quad t\in [0,T],\\
& X^*(0) = x.
\end{aligned}
\right.
\end{equation}
Moreover,
\begin{equation*}
\inf_{u(\cdot)\in \mathscr U[0,T]}J(x;u^*(\cdot))=J\big(x; u(\cdot) \big)=\big\langle\widehat P(0)x,x\big\rangle.
\end{equation*}
\end{theorem}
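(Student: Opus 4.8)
The plan is to produce the two Riccati solutions one after the other, then to read off the feedback control from a completion-of-squares identity, all the while using the splitting of the state into its fluctuation $X(\cdot)-\mathbb E[X(\cdot)]$ and its mean $\mathbb E[X(\cdot)]$ recorded in \eqref{Sec2_x-Ex}, \eqref{Sec2_Ex} and \eqref{Sec2_Cost_xEx}. First I would observe that \eqref{Sec2.F_Riccati_Eq1} has precisely the form of the Riccati equation of the classical (non-mean-field) stochastic LQ problem with state equation $dx=(Ax+Bu)\,dt+(Cx+Du)\,dW$ and weights $(Q,S,R,G)$; indeed $\Gamma(P)^\top[D^\top PD+R]\Gamma(P)=[PB+C^\top PD+S][D^\top PD+R]^{-1}[PB+C^\top PD+S]^\top$. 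Since Condition (PD) forces $\bigl(\begin{smallmatrix}Q&S\\ S^\top&R\end{smallmatrix}\bigr)\ge0$ (hence $Q\ge0$), $R\gg0$ and $G\ge0$, this is the well-understood positive-semidefinite situation: a local $C^1$ solution exists by the local Lipschitz property of the right-hand side on the set where $D^\top PD+R$ is invertible; a verification argument identifies $\langle P(t)\xi,\xi\rangle$ with the value of that sub-problem started at time $t$, so $0\le P(t)\le CI_n$ with $C$ uniform in $t$ (by comparison with the zero control); then $D^\top PD+R\ge R\gg0$ stays boundedly invertible, $\Gamma(P)$ and the right-hand side remain bounded, and the local solution continues to all of $[0,T]$. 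This gives a unique $P(\cdot)\in\mathbb S^n_+$ with $D^\top PD+R\gg0$.

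With $P(\cdot)$ now fixed, bounded and $\mathbb S^n_+$-valued, I would next observe that \eqref{Sec2.F_Riccati_Eq2} is the Riccati equation of a \emph{deterministic} LQ problem for the mean dynamics \eqref{Sec2_Ex}, namely the one with state equation $\dot{\bar x}=\widehat A\bar x+\widehat B v$ and the $P$-shifted weights $\widehat Q+\widehat C^\top P\widehat C$, $\widehat S+\widehat C^\top P\widehat D$, $\widehat R+\widehat D^\top P\widehat D$ and $\widehat G$. Because $\bigl(\begin{smallmatrix}\widehat C^\top\\ \widehat D^\top\end{smallmatrix}\bigr)P\bigl(\begin{smallmatrix}\widehat C&\widehat D\end{smallmatrix}\bigr)\ge0$ and Condition (PD) gives $\bigl(\begin{smallmatrix}\widehat Q&\widehat S\\ \widehat S^\top&\widehat R\end{smallmatrix}\bigr)\ge0$, $\widehat R\gg0$ and $\widehat G\ge0$, the shifted weights again sit in the positive-semidefinite regime and $\widehat D^\top P\widehat D+\widehat R\ge\widehat R\gg0$. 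Running the same local-existence / a-priori-bound / continuation scheme as above then yields a unique $\widehat P(\cdot)\in\mathbb S^n_+$ with $\widehat D^\top P\widehat D+\widehat R\gg0$, which finishes the solvability claim.

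For the feedback form, the value and uniqueness, I would take an arbitrary admissible pair $(X(\cdot),u(\cdot))$ and apply It\^{o}'s formula to $t\mapsto\langle P(t)(X(t)-\mathbb E[X(t)]),X(t)-\mathbb E[X(t)]\rangle+\langle\widehat P(t)\mathbb E[X(t)],\mathbb E[X(t)]\rangle$, using \eqref{Sec2_x-Ex}, \eqref{Sec2_Ex}, the two Riccati equations and the definitions in \eqref{Sec2.F_Gamma}. The cross terms between fluctuation and mean vanish after taking expectation (since $\mathbb E[X(\cdot)]$ is deterministic and $X(\cdot)-\mathbb E[X(\cdot)]$ is centered), and the remaining terms reassemble — this is exactly the job $\Gamma(P)$ and $\widehat\Gamma(P,\widehat P)$ were built for — into
\begin{equation*}
J(x;u(\cdot))=\langle\widehat P(0)x,x\rangle+\mathbb E\int_0^T\Big\{\big\langle(D^\top PD+R)\rho,\rho\big\rangle+\big\langle(\widehat D^\top P\widehat D+\widehat R)\widehat\rho,\widehat\rho\big\rangle\Big\}\,dt,
\end{equation*}
with $\rho(t)=(u(t)-\mathbb E[u(t)])-\Gamma(P)(X(t)-\mathbb E[X(t)])$ and $\widehat\rho(t)=\mathbb E[u(t)]-\widehat\Gamma(P,\widehat P)\mathbb E[X(t)]$. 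As $D^\top PD+R\gg0$ and $\widehat D^\top P\widehat D+\widehat R\gg0$, the integral is nonnegative, so $J(x;u(\cdot))\ge\langle\widehat P(0)x,x\rangle$, with equality precisely when $\rho\equiv0$ and $\widehat\rho\equiv0$, i.e. when $u=\Gamma(P)(X-\mathbb E[X])+\widehat\Gamma(P,\widehat P)\mathbb E[X]$, which is \eqref{Sec2.F_Optimal_Control}. Boundedness of $P(\cdot),\widehat P(\cdot)$ and of the two inverses makes $\Gamma(P),\widehat\Gamma(P,\widehat P)\in L^\infty$, so the closed-loop system \eqref{Sec2.F_Optimal_State} is well-posed with a unique solution $X^*(\cdot)\in L^2_{\mathbb F}(\Omega;C([0,T];\mathbb R^n))$; the resulting $u^*(\cdot)$ is admissible and optimal, $\inf_{u(\cdot)}J(x;u(\cdot))=J(x;u^*(\cdot))=\langle\widehat P(0)x,x\rangle$, and uniqueness of the optimal control also follows from Theorem \ref{Sec2.O_THM_Solution}.

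The step I expect to be the main obstacle is the global, blow-up-free solvability of the Riccati equations \eqref{Sec2.F_Riccati_Eq1} and \eqref{Sec2.F_Riccati_Eq2}: a priori a local solution could escape to infinity before reaching $t=0$. The remedy is the a priori bound $0\le P(\cdot)\le CI_n$ (and likewise for $\widehat P$) coming from the verification interpretation of the associated positive-semidefinite sub-problems; combined with the strict positivity $R,\widehat R\gg0$ of the control weights this keeps $D^\top PD+R$ and $\widehat D^\top P\widehat D+\widehat R$ boundedly invertible, bounds the right-hand sides of the ODEs, and permits continuation of the local solutions down to $t=0$.
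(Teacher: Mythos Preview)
Your proposal is correct and follows essentially the same approach as the paper: recognize \eqref{Sec2.F_Riccati_Eq1} as the Riccati equation of a standard stochastic LQ problem (the paper simply cites Yong--Zhou \cite{Yong-Zhou-1999}), recognize \eqref{Sec2.F_Riccati_Eq2} as the Riccati equation of a deterministic LQ problem with the $P$-shifted weights (the paper reaches the same conclusion after an explicit algebraic rewriting of the equation), and then derive the completion-of-squares identity \eqref{Ju1u2} via It\^o's formula. The only cosmetic difference is that the paper splits the It\^o computation into two passes---first $\langle P(X-\mathbb E[X]),X-\mathbb E[X]\rangle$ to process the fluctuation part $J_1$, then $\langle\widehat P\,\mathbb E[X],\mathbb E[X]\rangle$ to process the mean part $\widehat J_2$---whereas you treat the sum in a single step; the resulting identity and the conclusions drawn from it are identical.
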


\begin{proof}
If the quadruple $(\mathbf Q(\cdot), \mathbf S(\cdot), \mathbf
R(\cdot), \mathbf G)$ satisfies Condition (PD), the Riccati equation
\eqref{Sec2.F_Riccati_Eq1} is the standard case of Yong and Zhou
\cite{Yong-Zhou-1999}. Therefore, there exists a unique solution $P(\cdot)\in C^1([0,T];\mathbb S^n_+)$. Next, a short calculation for \eqref{Sec2.F_Riccati_Eq2} yields
\begin{equation}\label{Riccatirewrite}
\left\{
\begin{aligned}
    &~\dot {\widehat P}+\widehat P\Big[\widehat A-\widehat B\Big(\widehat R^{-1}\widehat S^\top-\big[ \widehat D^\top P
\widehat D +\widehat R \big]^{-1}\widehat D^\top P\big[\widehat C-\widehat D\widehat R^{-1}\widehat S^\top\big]\Big)\Big]\\
&~~~~+\Big[\widehat A-\widehat B\Big(\widehat R^{-1}\widehat S^\top-\big[\widehat D^\top P
\widehat D +\widehat R \big]^{-1}\widehat D^\top P\big[\widehat C-\widehat D\widehat R^{-1}\widehat S^\top\big]\Big)\Big]^\top\widehat P\\
&~~~~+\big(\widehat C-\widehat D\widehat R^{-1}\widehat S^\top)^\top\big(P-P\widehat D\big[\widehat D^\top P
\widehat D +\widehat R \big]^{-1}\widehat D^\top P
\big)\big(\widehat C-\widehat D\widehat R^{-1}\widehat S^\top)\\
&~~~~+\widehat Q-\widehat S\widehat R^{-1}\widehat S^\top-\widehat P\widehat B\big[\widehat D^\top P
\widehat D +\widehat R \big]^{-1}\widehat B^\top \widehat P=0,\\
&~\widehat P(T)=\widehat G.
\end{aligned}
\right.
\end{equation}
Since $(\mathbf Q(\cdot), \mathbf S(\cdot), \mathbf R(\cdot), \mathbf G)$ satisfies Condition (PD), we have
\begin{equation*}
\left\{
    \begin{aligned}
        &~\big(\widehat C-\widehat D\widehat R^{-1}\widehat S^\top)^\top\big(P-P\widehat D\big[\widehat D^\top P
\widehat D +\widehat R \big]^{-1}\widehat D^\top P
\big)\big(\widehat C-\widehat D\widehat R^{-1}\widehat S^\top)+\widehat Q-\widehat S\widehat R^{-1}\widehat S^\top\geq0,\\
&~\widehat D^\top P
\widehat D +\widehat R \gg 0, \quad \widehat G\geq0,
    \end{aligned}
    \right.
\end{equation*}
Riccati equation \eqref{Riccatirewrite} admits a unique solution $\widehat P(\cdot)\in C^1([0,T];\mathbb S^n_+)$. Then, the system of Riccati equations \eqref{Sec2.F_Riccati_Eq1}-\eqref{Sec2.F_Riccati_Eq2} admits a unique solution $(P(\cdot),\widehat P(\cdot))\in \big(C^1([0,T];\mathbb S^n_+)\big)^2$.

Next, we will prove that $(X^*,u^*)$ is the optimal pair of Problem (MF-LQ). We split the cost functional \eqref{Sec2_Cost_xEx} into two parts:
\begin{equation}\label{Jsplit}
    J(x;u(\cdot))=J_1(x;u(\cdot))+J_2(x;u(\cdot)),
\end{equation}
with
\begin{equation*}\label{J1}
    \begin{aligned}
        J_1(x;u(\cdot))=\mathbb E\int_0^T\big[\langle QX_1,~X_1\rangle+2\langle Su_1,~X_1\rangle+\langle Ru_1,~u_1\rangle\big]dt+\mathbb E\langle GX_1(T),~X_1(T)\rangle
    \end{aligned}
\end{equation*}
and
\begin{equation*}
    \begin{aligned}
        J_2(x;u(\cdot))=\mathbb E\int_0^T\big[\langle \widehat QX_2,~X_2\rangle+2\langle \widehat Su_2,~X_2\rangle+\langle \widehat Ru_2,~u_2\rangle\big]dt+\mathbb E\langle \widehat GX_2(T),~X_2(T)\rangle,
    \end{aligned}
\end{equation*}
where $X_1(\cdot)=X(\cdot)-\mathbb E [X(\cdot)]$,
$X_2(\cdot)=\mathbb E [X(\cdot)]$, $u_1(\cdot)=u(\cdot)-\mathbb E
[u(\cdot)]$, and $u_2(\cdot)=\mathbb E [u(\cdot)]$.

Now, we deal with $J(x;u(\cdot))$ by two steps. 

\noindent {\bf Step 1:} Let $P(\cdot)$ be the solution of Riccati equation \eqref{Sec2.F_Riccati_Eq1}. Applying It\^o's formula to $\langle PX_1,~X_1\rangle$, we obtain
\begin{equation*}
    \begin{aligned}
        &~d\langle PX_1,~X_1\rangle\\&=\Big\{\big\langle(\dot P+PA+A^\top P+C^\top PC)X_1,~X_1\big\rangle+2\big\langle(PB+C^\top PD)u_1,~X_1\big\rangle+\big\langle D^\top PDu_1,~u_1\big\rangle\\
        &~~~~~~+\big\langle\widehat C^\top P\widehat CX_2,~X_2\big\rangle+2\big\langle\widehat C^\top P \widehat Du_2,~X_2\big\rangle+\big\langle\widehat D^\top P\widehat Du_2,~u_2\big\rangle\Big\}dt+\{...\}dW(t).
    \end{aligned}
\end{equation*}
Integrating on $[0,T]$ and   taking expectation $\mathbb E[\cdot]$ on both sides of the above equility,  we have

\begin{equation}\label{PX_1X_1}
    \begin{aligned}
        &~~~\mathbb E\big\langle P(T)X_1(T),~X_1(T)\big\rangle\\
        &=\mathbb E\int_0^T\Big\{\big\langle(\dot P+PA+A^\top P+C^\top PC)X_1,~X_1\big\rangle\\
        &~~~~~~+2\big\langle(PB+C^\top PD)u_1,~X_1\big\rangle+\big\langle D^\top PDu_1,~u_1\big\rangle\Big\}dt\\
        &~~~+\mathbb E\int_0^T\Big\{\big\langle\widehat C^\top P\widehat CX_2,~X_2\big\rangle+2\big\langle\widehat C^\top P \widehat Du_2,~X_2\big\rangle+\big\langle\widehat D^\top P\widehat Du_2,~u_2\big\rangle\Big\}dt.
            \end{aligned}
\end{equation}
Substituting \eqref{PX_1X_1} into $J_1(x;u(\cdot))$ yields
\begin{equation}\label{J1_2}
    \begin{aligned}
        J_1(x;u(\cdot))&=\mathbb E\int_0^T\Big\{\big\langle(\dot P+PA+A^\top P+C^\top PC+Q)X_1,~X_1\big\rangle\\
        &~~~~~~~+2\big\langle(PB+C^\top PD+S)u_1,~X_1\big\rangle+\big\langle (D^\top PD+R)u_1,~u_1\big\rangle\Big\}dt\\
        &~~~+\mathbb E\int_0^T\Big\{\big\langle\widehat C^\top P\widehat CX_2,~X_2\big\rangle+2\big\langle\widehat C^\top P \widehat Du_2,~X_2\big\rangle+\big\langle\widehat D^\top P\widehat
        Du_2,~u_2\big\rangle\Big\}dt    \end{aligned}
\end{equation}
Using the square completion method about $X_1$ and $u_1$ on \eqref{J1_2}, provided $D^\top PD+R>0$, we have
\begin{equation}\label{J1squared}
\begin{aligned}
    J_1(x;u(\cdot))&=\mathbb E\int_0^T\Big\{\big\langle (D^\top PD+R)[u_1-\Gamma(P)X_1],~[u_1-\Gamma(P)X_1]\big\rangle\\
    &~~~~~~+\big\langle\big[\dot P+PA+A^\top P+C^\top PC+Q-\Gamma(P)^\top(D^\top PD+R)\Gamma(P)\big]X_1,~X_1\big\rangle\Big\}dt\\
    &~~~~~~+\mathbb E\int_0^T\Big\{\big\langle\widehat C^\top P\widehat CX_2,~X_2\big\rangle+2\big\langle\widehat C^\top P \widehat Du_2,~X_2\big\rangle+\big\langle\widehat D^\top P\widehat Du_2,~u_2\big\rangle\Big\}dt\\
    &~~~ = \mathbb E\int_0^T\big\langle (D^\top PD+R)[u_1-\Gamma(P)X_1],~[u_1-\Gamma(P)X_1]\big\rangle dt\\
    &~~~~~~+\mathbb E\int_0^T\Big\{\big\langle\widehat C^\top P\widehat CX_2,~X_2\big\rangle+2\big\langle\widehat C^\top P \widehat Du_2,~X_2\big\rangle+\big\langle\widehat D^\top P\widehat Du_2,~u_2\big\rangle\Big\}dt.
\end{aligned}
\end{equation}
%
Substituting \eqref{J1squared} into \eqref{Jsplit} leads to
\[
\begin{aligned}
J(x;u(\cdot))&=J_1(x;u(\cdot))+J_2(x;u(\cdot))\\
    &=\mathbb E\int_0^T\Big\{\big\langle (D^\top PD+R)[u_1-\Gamma(P)X_1],~[u_1-\Gamma(P)X_1]\big\rangle\Big\}dt+\widehat
    J_2(x;u(\cdot)),
\end{aligned}
\]
where
\begin{equation}
\begin{aligned}
\widehat J_2(x;u(\cdot)) :=\ & \mathbb E\int_0^T\Big\{\big\langle(\widehat C^\top P\widehat C+\widehat Q)X_2,~X_2\big\rangle+2\big\langle(\widehat C^\top P \widehat D+\widehat S)u_2,~X_2\big\rangle\\
    & +\big\langle(\widehat D^\top P\widehat D+\widehat R)u_2,~u_2\big\rangle\Big\}dt+\mathbb E \big\langle \widehat
    GX_2(T),~X_2(T)\big\rangle.
\end{aligned}
\end{equation}
%

\noindent{\bf Step 2:} Now, we deal with $\widehat J_2(x;u(\cdot))$.
Note that $X_2(\cdot)=\mathbb E[X(\cdot)]$ and $u_2(\cdot)=\mathbb
E[u(\cdot)]$ are deterministic. Therefore, the LQ
problem of system \eqref{Sec2_Ex} and the cost
functional $\widehat J_2(x;u(\cdot))$ are deterministic. Let
$\widehat P(\cdot)$ be the solution of Riccati equation
\eqref{Sec2.F_Riccati_Eq2}. Differentiating $\langle\widehat
PX_2,~X_2\rangle$ and integrating from $0$ to $T$, we have
\begin{equation}\label{hatPX2}
    \begin{aligned}
        &~~~\big\langle\widehat GX_2(T),~X_2(T)\big\rangle-\big\langle\widehat P(0)X_2(0),~X_2(0)\big\rangle\\
        &=\int_0^T\Big\{\big\langle(\dot {\widehat P} +\widehat P \widehat A +\widehat A^\top \widehat P)X_2,~X_2\big\rangle+2\big\langle\widehat P\widehat Bu_2,~X_2\big\rangle\Big\}dt.
    \end{aligned}
\end{equation}
Adding \eqref{hatPX2} to $\widehat J_2(u(\cdot))$, we have
\begin{equation*}
    \begin{aligned}
        \widehat J_2(x;u(\cdot))&=\int_0^T\Big\{\big\langle(\dot {\widehat P} +\widehat P \widehat A +\widehat A^\top \widehat P+\widehat C^\top P\widehat C+\widehat Q)X_2,~X_2\big\rangle+2\big\langle(\widehat P\widehat B+\widehat C^\top P \widehat D+\widehat S)u_2,~X_2\big\rangle\\
        &~~~~~~~~~+\big\langle(\widehat D^\top P\widehat D+\widehat R)u_2,~u_2\big\rangle\Big\}dt+\big\langle\widehat P(0)x,~x\big\rangle.
    \end{aligned}
\end{equation*}
By completing the square, we reduce $\widehat J_2(x;u(\cdot))$ to
\begin{equation*}
    \begin{aligned}
        \widehat J_2(x;u(\cdot))&=\big\langle\widehat P(0)x,~x\big\rangle+\int_0^T\Big\{\big\langle (\widehat D^\top P\widehat D+\widehat R)[u_2-\widehat\Gamma(P,\widehat P)X_2],~[u_2-\widehat\Gamma(P,\widehat P)X_2]\big\rangle\\
        &~~~+\big\langle\big[\dot {\widehat P} +\widehat P \widehat A +\widehat A^\top \widehat P +\widehat C^\top P
\widehat C +\widehat Q -\widehat \Gamma(P, \widehat P)^\top \big( \widehat
D^\top P \widehat D +\widehat R \big) \widehat\Gamma(P, \widehat P)\big]X_2,~X_2\big\rangle\Big\}dt\\
&~~~ = \big\langle\widehat P(0)x,~x\big\rangle+\int_0^T\big\langle (\widehat D^\top P\widehat D+\widehat R)[u_2-\widehat\Gamma(P,\widehat P)X_2],~[u_2-\widehat\Gamma(P,\widehat P)X_2]\big\rangle dt.\\
    \end{aligned}
\end{equation*}
In summary, since $(P(\cdot), \widehat P(\cdot))$ is the solution of Riccati equations \eqref{Sec2.F_Riccati_Eq1}-\eqref{Sec2.F_Riccati_Eq2}, we have
\begin{equation}
    \begin{aligned}\label{Ju1u2}J(x;u(\cdot))&=J(x;u_1(\cdot),u_2(\cdot))\\&=\mathbb E\int_0^T\Big\{\big\langle (D^\top PD+R)[u_1-\Gamma(P)X_1],~[u_1-\Gamma(P)X_1]\big\rangle\Big\}dt\\
        &~~~+\int_0^T\Big\{\big\langle (\widehat D^\top P\widehat D+\widehat R)[u_2-\widehat\Gamma(P,\widehat P)X_2],~[u_2-\widehat\Gamma(P,\widehat P)X_2]\big\rangle\Big\}dt\\
        &~~~+\big\langle\widehat P(0)x,~x\big\rangle\geq\big\langle\widehat P(0)x,~x\big\rangle.
    \end{aligned}
\end{equation}
If we take
\begin{equation*}
    u^*_1=\Gamma(P)X^*_1,~~~~u^*_2=\widehat\Gamma(P,\widehat P)X^*_2,
\end{equation*}
where $X_1^*$ and $X_2^*$ are determined by
\begin{equation*}
\left\{\begin{aligned}
& dX_1^* =  \Big( AX_1^*+B\Gamma(P)X_1^* \Big) dt+\Big(CX_1^*+D\Gamma(P)X_1^*+ \widehat CX_2^* +\widehat D\widehat\Gamma(P,\widehat P)X_2^*
\Big)dW(t),\quad t\in [0,T],\\
& X_1^*(0) = 0,
\end{aligned}\right.
\end{equation*}
and
\begin{equation*}
\left\{\begin{aligned}
& dX_2^* =  \Big( \widehat A+\widehat B\widehat \Gamma(P,\widehat P)\Big) X_2^* dt,\quad t\in [0,T],\\
& X_2^*(0) = x,
\end{aligned}\right.
\end{equation*}
then the equality of \eqref{Ju1u2} holds. Hence, we get
\[
J(x;u^*_1(\cdot),u^*_2(\cdot))=\big\langle\widehat P(0)x,~x\big\rangle.
\]
Also, we have the following optimal control
\begin{equation*}
\begin{aligned} u^*&=u_1^*+u_2^*=\Gamma(P)X_1^*+\widehat \Gamma(P,\widehat P)X_2^*\\
    &=\Gamma(P)(X^*-\mathbb E[X^*])+\widehat \Gamma(P,\widehat P)\mathbb E[X^*],
    \end{aligned}
\end{equation*}
where $X^*(\cdot)$ is determined by \eqref{Sec2.F_Optimal_State}. Therefore,
we have
\begin{equation*}
     J\big(x;u^*(\cdot) \big)= \inf_{u(\cdot)\in \mathscr{U}[0,T]}
J\big(x;u(\cdot)\big)=\big\langle\widehat P(0)x,x\big\rangle,
\end{equation*}
which implies the desired result.
\end{proof}

\begin{proposition}\label{pro34} \sl
Let
\begin{equation}\label{Sec2.F_Relat_YZ_Xu}
\left\{
\begin{aligned}
& Y = P \big( X^* -\mathbb E[X^*] \big) +\widehat P \mathbb E[X^*],\\
& Z = P \big( CX^* +\widetilde C \mathbb E[X^*] +Du^* +\widetilde
D \mathbb E[u^*] \big),
\end{aligned}
\right. \qquad t\in [0,T],
\end{equation}
where $(X^*(\cdot),u^*(\cdot))$ defined by
\eqref{Sec2.F_Optimal_State}-\eqref{Sec2.F_Optimal_Control} and
$(P(\cdot), \widehat P(\cdot))$ is the solution of Riccati equations
\eqref{Sec2.F_Riccati_Eq1}-\eqref{Sec2.F_Riccati_Eq2}. Then
$\Theta^*(\cdot) = (X^*(\cdot), u^*(\cdot), Y(\cdot),Z(\cdot))$
defined by \eqref{Sec2.F_Optimal_Control},
\eqref{Sec2.F_Optimal_State} and \eqref{Sec2.F_Relat_YZ_Xu} is a
solution to the Hamiltonian system \eqref{Sec2.O_Hamil_Sys}.

\end{proposition}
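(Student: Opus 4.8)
The plan is to verify directly that the quadruple $\Theta^*(\cdot)=(X^*(\cdot),u^*(\cdot),Y(\cdot),Z(\cdot))$ satisfies each of the four relations comprising the Hamiltonian system \eqref{Sec2.O_Hamil_Sys}. Two of them are essentially free. The forward equation for $X^*$ and the initial condition $X^*(0)=x$ are nothing but \eqref{Sec2.F_Optimal_State}: substituting the feedback law \eqref{Sec2.F_Optimal_Control} into the drift and diffusion of \eqref{Sec2_Sys} and regrouping via $X^*=X_1^*+X_2^*$, $u^*=u_1^*+u_2^*$ with $X_1^*=X^*-\mathbb E[X^*]$, $X_2^*=\mathbb E[X^*]$, $u_1^*=\Gamma(P)X_1^*$, $u_2^*=\widehat\Gamma(P,\widehat P)X_2^*$ reproduces \eqref{Sec2.F_Optimal_State}. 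The terminal condition for $Y$ is immediate: evaluating \eqref{Sec2.F_Relat_YZ_Xu} at $t=T$ and using $P(T)=G$, $\widehat P(T)=\widehat G=G+\widetilde G$ gives $Y(T)=G\big(X^*(T)-\mathbb E[X^*(T)]\big)+\widehat G\mathbb E[X^*(T)]=GX^*(T)+\widetilde G\mathbb E[X^*(T)]$.

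The substantive work is the stationarity condition $\Psi(\Theta^*,\mathbb E[\Theta^*])=0$ and the backward equation $dY=-g(\Theta^*,\mathbb E[\Theta^*])\,dt+Z\,dW$. For both I would split into a ``mean part'' and a ``fluctuation part'', using the structure of \eqref{Sec2.F_Relat_YZ_Xu}: since $X_2^*=\mathbb E[X^*]$ is deterministic and $\mathbb E[X_1^*]=0$, one gets $\mathbb E[Y]=\widehat P X_2^*$, $Y-\mathbb E[Y]=PX_1^*$, $\mathbb E[Z]=P(\widehat C X_2^*+\widehat D u_2^*)$ and $Z-\mathbb E[Z]=P(CX_1^*+Du_1^*)$. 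Plugging these into $\Psi$ from \eqref{Sec2.O_gPsi} and separating, the fluctuation part collapses to $(PB+C^\top PD+S)^\top X_1^*+(D^\top PD+R)u_1^*$ and the mean part to $(\widehat P\widehat B+\widehat C^\top P\widehat D+\widehat S)^\top X_2^*+(\widehat D^\top P\widehat D+\widehat R)u_2^*$. Each vanishes because, by the definitions in \eqref{Sec2.F_Gamma}, $(D^\top PD+R)\Gamma(P)=-(PB+C^\top PD+S)^\top$ and $(\widehat D^\top P\widehat D+\widehat R)\widehat\Gamma(P,\widehat P)=-(\widehat P\widehat B+\widehat C^\top P\widehat D+\widehat S)^\top$, together with $u_1^*=\Gamma(P)X_1^*$, $u_2^*=\widehat\Gamma(P,\widehat P)X_2^*$; hence $\Psi(\Theta^*,\mathbb E[\Theta^*])=0$.

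For the backward equation I would apply It\^o's formula to $Y=PX_1^*+\widehat P X_2^*$, using the dynamics of $X_1^*$ and $X_2^*$ already recorded in the proof of Theorem \ref{Sec2.F_THM_Solution}. The diffusion coefficient of $dY$ comes out as $P\big[(C+D\Gamma(P))X_1^*+(\widehat C+\widehat D\widehat\Gamma(P,\widehat P))X_2^*\big]=P\big(CX^*+\widetilde C\mathbb E[X^*]+Du^*+\widetilde D\mathbb E[u^*]\big)$, which is exactly $Z$ by \eqref{Sec2.F_Relat_YZ_Xu}. For the drift, separating into mean and fluctuation and inserting the above relations for $Y$ and $Z$, matching $-g$ from \eqref{Sec2.O_gPsi} reduces to the two identities $(\dot P+PA+A^\top P+C^\top PC+Q)X_1^*+(PB+C^\top PD+S)u_1^*=0$ and $(\dot{\widehat P}+\widehat P\widehat A+\widehat A^\top\widehat P+\widehat C^\top P\widehat C+\widehat Q)X_2^*+(\widehat P\widehat B+\widehat C^\top P\widehat D+\widehat S)u_2^*=0$. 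Substituting the feedback expressions for $u_1^*,u_2^*$, the first follows from \eqref{Sec2.F_Riccati_Eq1} and the $\Gamma$-relation (it becomes $\Gamma(P)^\top(D^\top PD+R)\Gamma(P)X_1^*-\Gamma(P)^\top(D^\top PD+R)\Gamma(P)X_1^*=0$), and the second likewise from \eqref{Sec2.F_Riccati_Eq2} and the $\widehat\Gamma$-relation. I expect this drift-matching computation to be the only delicate point, essentially because of the need to track transposes carefully and to invoke the correct Riccati equation in each piece; the rest is bookkeeping, after which all four relations of \eqref{Sec2.O_Hamil_Sys} hold and the proof is complete.
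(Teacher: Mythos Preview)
Your proposal is correct and follows essentially the same approach as the paper: verify the forward equation and initial condition directly from \eqref{Sec2.F_Optimal_State}, check the terminal condition $Y(T)=GX^*(T)+\widetilde G\mathbb E[X^*(T)]$ from $P(T)=G$ and $\widehat P(T)=\widehat G$, apply It\^o's formula to $Y=P(X^*-\mathbb E[X^*])+\widehat P\mathbb E[X^*]$ to identify the diffusion as $Z$ and match the drift to $-g$ via the Riccati equations, and finally substitute into $\Psi$ and use the $\Gamma,\widehat\Gamma$ relations to get $\Psi(\Theta^*,\mathbb E[\Theta^*])=0$. Your explicit mean/fluctuation split is exactly the bookkeeping the paper carries out (in slightly more compressed form), and your identification of the two key identities needed for the drift is precisely what the paper's computation reduces to.
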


\begin{proof}
    Firstly, it is clear that $(X^*(\cdot),u^*(\cdot))$ solves the
forward SDE (with the initial condition) in
\eqref{Sec2.O_Hamil_Sys}. Secondly, applying It\^{o}'s formula to
$Y(\cdot) = P(\cdot) \big( X^*(\cdot) -\mathbb E[X^*(\cdot)] \big)
+\widehat P(\cdot) \mathbb E[X^*(\cdot)]$, by the definition of
$Z(\cdot)$ and $u^*(\cdot)$, we have
\[
\begin{aligned}
dY =\ & d\big(P\big(X^* -\mathbb E[X^*]\big)\big)
+d\big(\widehat P\mathbb E[X^*]\big)\\
=\ & - \bigg\{ \Big[ A^\top P +C^\top PC +Q\Big] \big(X^*-\mathbb
E[X^*]\big) +\Big[ C^\top PD +S\Big] \big(u^*-\mathbb E[u^*]
\big)\\
& \qquad +\Big[\widehat A^\top\widehat P +\widehat C^\top P\widehat C
+\widehat Q\Big] \mathbb E[X^*] +\Big[ \widehat C^\top P \widehat
D +\widehat S \Big] \mathbb E[u^*] \bigg\} dt +ZdW(t) \\
=\ & -\Big\{ Q\big( X^*-\mathbb E[X^*] \big) +\widehat Q\mathbb
E[X^*] +S\big( u^*-\mathbb E[u^*]\big) +\widehat S\mathbb
E[u^*] + A^\top\big( Y-\mathbb E[Y]\big) \\
& \qquad +\widehat A^\top \mathbb E[Y] +C^\top \big(Z-\mathbb
E[Z]\big) +\widehat C^\top \mathbb E[Z] \Big\} dt +ZdW(t).
\end{aligned}
\]
Due to the definition of $g(\cdot,\Theta^*(\cdot),\mathbb
E[\Theta^*(\cdot)])$, we verify that $\Theta^*(\cdot)$
satisfies the BSDE (with the terminal condition) in the Hamiltonian
system \eqref{Sec2.O_Hamil_Sys}. Finally,
substituting \eqref{Sec2.F_Relat_YZ_Xu} into
$\Psi(\cdot,\Theta^*(\cdot),\mathbb E[\Theta^*(\cdot)])$ yields
\[
\begin{aligned}
\Psi\big(\Theta^*,\mathbb E[\Theta^*]\big) =\ & \big[ PB +C^\top
PD +S \big]^\top \big( X^* -\mathbb E[X^*] \big) +\big[ \widehat P
\widehat B +\widehat C^\top P\widehat D +\widehat S \big]^\top
\mathbb E[X^*]\\
& +\big[ D^\top P D +R \big] \big( u^* -\mathbb E[u^*] \big)
+\big[ \widehat D^\top P\widehat D +\widehat R \big] \mathbb
E[u^*].
\end{aligned}
\]
From the definition of $u^*(\cdot)$ (see
\eqref{Sec2.F_Optimal_Control}), we obtain
$\Psi(\cdot,\Theta^*(\cdot),\mathbb E[\Theta^*(\cdot)]) =0$, i.e.,
the stationarity condition in \eqref{Sec2.O_Hamil_Sys} is satisfied.
In summary, we prove that $\Theta^*(\cdot)$ is a solution to
the Hamiltonian system \eqref{Sec2.O_Hamil_Sys}.
\end{proof}

Under positive definite condition, Yong \cite{Yong-2013} studied the MF-LQ problem without cross-terms in the cost functional, and we study the case with cross-terms.
 Some of the above results of positive definite case can also be obtained by the direct method introduced in Duncan and Pasik-Duncan \cite{Duncan-Duncan-2017}, which is used to further develop a kind of nonlinear
nonquadratic mean-field type game with cross-terms in   Barreiro-Gomez et al. \cite{Barreiro-Duncan-Duncan-Tembine-2020}.

\section{Relaxed compensators and Problem (MF-LQ) in the indefinite case}\label{sec4}

In this section, we are concerned about Problem (MF-LQ) without
Condition (PD). For this indefinite case, inspired by the works of
Yu \cite{Yong-2013} and Huang and Yu
\cite{Huang-Yu-2014}, we introduce a notion named {\it relaxed
compensator} to assist our analysis.

\medskip

In detail, we introduce a space:
\[
\Lambda[0,T] = \bigg\{ F(\cdot)\ \bigg|\ F(t) = F(0) +\int_0^t
f(s)ds,\ t\in [0,T], \mbox{ where } f(\cdot)\in L^\infty(0,T;\mathbb
S^n) \bigg\}.
\]
For a given pair of functions $(H(\cdot),K(\cdot)) \in
\Lambda[0,T]\times \Lambda[0,T]$, we define (for simplicity of
notation, the argument $t$ is suppressed)
\begin{equation}\label{Sec3_Notations_bf_4HK}
\left\{
\begin{aligned}
& \mathbf Q^{H,K} = \left( \begin{array}{ccc} Q^{H,K} & O\\ O &
\widehat Q^{H,K}
\end{array} \right),\quad \mathbf S^{H,K} = \left( \begin{array}{ccc} S^{H,K} & O\\ O &
\widehat S^{H,K}
\end{array} \right),\\
& \mathbf R^{H,K} = \left( \begin{array}{ccc} R^{H,K} & O\\ O &
\widehat R^{H,K}
\end{array} \right),\quad \mathbf G^{H,K} = \left( \begin{array}{ccc} G^{H,K} & O\\ O &
\widehat G^{H,K}
\end{array} \right),
\end{aligned}
\right.
\end{equation}
where
\begin{equation}\label{Sec3_Notations_8HK}
\left\{
\begin{aligned}
& Q^{H,K} = \dot H +HA +A^\top H +C^\top HC +Q,\quad \widehat
Q^{H,K} = \dot K +K\widehat A +\widehat A^\top K
+\widehat C^\top H\widehat C +\widehat Q,\\
& S^{H,K} = HB +C^\top HD +S, \hskip 26.7mm \widehat S^{H,K} =
K\widehat B
+\widehat C^\top H\widehat D +\widehat S,\\
& R^{H,K} = D^\top HD +R,\hskip 36.4mm \widehat R^{H,K} =
\widehat D^\top H \widehat D +\widehat R,\\
& G^{H,K} = G -H(T), \hskip 39.4mm \widehat G^{H,K} = \widehat G
-K(T).
\end{aligned}
\right.
\end{equation}
According to the notation  given by \eqref{Sec3_Notations_8HK},  we introduce
\begin{equation*}
\begin{aligned}
& J^{H,K}\big(x;u(\cdot)\big) \\
=\ & \mathbb E \bigg\{ \int_0^T
\Big[ \big\langle  Q^{H,K}(t)(X(t)-\mathbb E[X(t)]),X(t)-\mathbb
E[X(t)] \big\rangle +\big\langle \widehat{
Q}^{H,K}(t)\mathbb E [X(t)],\ \mathbb E [X(t)] \big\rangle\\
& ~~~~~~~~+2\big\langle S^{H,K}(t)( u(t)-\mathbb E[ u(t)]), X(t)-\mathbb E[ X(t)]\big\rangle +2\big\langle \widehat
S^{H,K}(t)\mathbb E [u(t)],\ \mathbb E [X(t)] \big\rangle\\
& ~~~~~~~~+\big\langle R^{H,K}(t)( u(t)-\mathbb E[ u(t)]), u(t)-\mathbb E[ u(t)]\big\rangle +\big\langle \widehat
R^{H,K}(t)\mathbb E [ u(t)],\ \mathbb E [u(t)] \big\rangle \Big] dt\\
& ~~~~~~~~+\big\langle G^{H,K}(X(T)-\mathbb E[X(T)], X(T)-\mathbb E[X(T)] \big\rangle +\big\langle \widehat G^{H,K}
\mathbb E [X(T)],\ \mathbb E [X(T)] \big\rangle \bigg\}.
\end{aligned}
\end{equation*}

Then, similar to Problem (MF-LQ), we propose another MF-LQ stochastic optimal control problems as follows:

\medskip

\noindent{\bf Problem (MF-LQ)$^{H,K}$.} For given $x\in \mathbb R^n$, the problem is to find an admissible control $u^{H,K}(\cdot)
\in \mathscr U[0,T]$ such that
\begin{equation*}
J^{H,K}\big(x;u^{H,K}(\cdot) \big) = \inf_{u(\cdot)\in\mathscr{U}[0,T]} J^{H,K}\big(x;u(\cdot) \big).
\end{equation*}

\medskip

The next lemma shows the equivalence between $J(x;u(\cdot))$ and
$J^{H,K}(x;u(\cdot))$, which plays a key role in our analysis.

\begin{lemma}\label{Sec3_Lem_Cost_Relation}
Let $(H(\cdot),K(\cdot))\in \Lambda[0,T]\times\Lambda[0,T]$. For any $x\in \mathbb R^n$ and any $u(\cdot)\in \mathscr
U[0,T]$,
\begin{equation}\label{Sec3_Cost_Relation}
J^{H,K}(x;u(\cdot)) = J(x;u(\cdot)) -\langle K(0)x,\ x
\rangle.
\end{equation}
\end{lemma}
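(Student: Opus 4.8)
The plan is to apply It\^o's formula to the processes $\langle H(t)(X(t)-\mathbb E[X(t)]),\ X(t)-\mathbb E[X(t)]\rangle$ and $\langle K(t)\mathbb E[X(t)],\ \mathbb E[X(t)]\rangle$ on $[0,T]$, and then compare the result with the definition of $J^{H,K}$. Write $X_1(t)=X(t)-\mathbb E[X(t)]$, $X_2(t)=\mathbb E[X(t)]$, $u_1(t)=u(t)-\mathbb E[u(t)]$, $u_2(t)=\mathbb E[u(t)]$, so that $X_1$ solves the SDE \eqref{Sec2_x-Ex} (driven by the diffusion involving $C X_1+\widehat C X_2+D u_1+\widehat D u_2$) and $X_2$ solves the ODE \eqref{Sec2_Ex}. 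Since $H(\cdot),K(\cdot)\in\Lambda[0,T]$, they are absolutely continuous with essentially bounded derivatives $\dot H,\dot K$, so It\^o's formula applies with the $\dot H,\dot K$ terms appearing in the drift.

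First I would compute $d\langle H X_1,X_1\rangle$. The drift of $X_1$ is $AX_1+Bu_1$ and its diffusion is $CX_1+\widehat C X_2+Du_1+\widehat D u_2$; hence by It\^o
\begin{align*}
d\langle H X_1,X_1\rangle &=\Big\{\langle(\dot H+HA+A^\top H)X_1,X_1\rangle+2\langle H B u_1,X_1\rangle\\
&\quad+\langle H(CX_1+\widehat C X_2+Du_1+\widehat D u_2),\ CX_1+\widehat C X_2+Du_1+\widehat D u_2\rangle\Big\}dt+\{\cdots\}dW(t),
\end{align*}
where the martingale part integrates to zero in expectation. Expanding the quadratic diffusion term and collecting the coefficients of $X_1,u_1$ one obtains exactly $\langle Q^{H,K}X_1,X_1\rangle-\langle QX_1,X_1\rangle+2(\langle S^{H,K}u_1,X_1\rangle-\langle Su_1,X_1\rangle)+\langle R^{H,K}u_1,u_1\rangle-\langle Ru_1,u_1\rangle$, plus the cross-expectation terms $\langle\widehat C^\top H\widehat C X_2,X_2\rangle+2\langle \widehat C^\top H\widehat D u_2,X_2\rangle+\langle\widehat D^\top H\widehat D u_2,u_2\rangle$ — this matches the computation already carried out in the proof of Theorem \ref{Sec2.F_THM_Solution} (equation \eqref{PX_1X_1} with $H$ in place of $P$). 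Similarly $d\langle K X_2,X_2\rangle=\{\langle(\dot K+K\widehat A+\widehat A^\top K)X_2,X_2\rangle+2\langle K\widehat B u_2,X_2\rangle\}dt$ since $X_2$ is deterministic and has no diffusion; the coefficients here realize $\langle\widehat Q^{H,K}X_2,X_2\rangle-\langle\widehat Q X_2,X_2\rangle-\langle\widehat C^\top H\widehat C X_2,X_2\rangle+2(\langle\widehat S^{H,K}u_2,X_2\rangle-\langle\widehat S u_2,X_2\rangle)-2\langle\widehat C^\top H\widehat D u_2,X_2\rangle+\langle\widehat R^{H,K}u_2,u_2\rangle-\langle\widehat R u_2,u_2\rangle-\langle\widehat D^\top H\widehat D u_2,u_2\rangle$ after substituting the definitions in \eqref{Sec3_Notations_8HK}.

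Then I would integrate both identities over $[0,T]$, take expectations, use $X_1(0)=0$ and $X_2(0)=x$, and add them. Adding $\mathbb E\langle H(T)X_1(T),X_1(T)\rangle+\langle K(T)X_2(T),X_2(T)\rangle$ back to the right side and using $G^{H,K}=G-H(T)$, $\widehat G^{H,K}=\widehat G-K(T)$, the boundary contributions convert $\langle GX_1(T),X_1(T)\rangle+\langle\widehat G X_2(T),X_2(T)\rangle$ into the corresponding $G^{H,K},\widehat G^{H,K}$ terms. The key cancellation is that the spurious cross-expectation terms $\langle\widehat C^\top H\widehat C X_2,X_2\rangle$, $2\langle\widehat C^\top H\widehat D u_2,X_2\rangle$, $\langle\widehat D^\top H\widehat D u_2,u_2\rangle$ produced by $d\langle H X_1,X_1\rangle$ are exactly removed by the matching terms in the definition of $\widehat Q^{H,K},\widehat S^{H,K},\widehat R^{H,K}$ via $d\langle K X_2,X_2\rangle$; this is the one bookkeeping point that needs care. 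Putting everything together, the sum of the two It\^o identities says precisely that $J^{H,K}(x;u(\cdot))-J(x;u(\cdot))$ equals $-\mathbb E\langle H(0)X_1(0),X_1(0)\rangle-\langle K(0)X_2(0),X_2(0)\rangle=-\langle K(0)x,x\rangle$, using $X_1(0)=0$, which is \eqref{Sec3_Cost_Relation}.

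The main obstacle is purely the algebra of matching coefficients: one must carefully expand the quadratic form $\langle H(CX_1+\widehat C X_2+Du_1+\widehat D u_2),\ CX_1+\widehat C X_2+Du_1+\widehat D u_2\rangle$, separate the pieces that are quadratic in $(X_1,u_1)$ from the ones involving $(X_2,u_2)$ (and the genuine cross terms like $\langle HC X_1,\widehat C X_2\rangle$, which have zero expectation because $\mathbb E[X_1]=0$ and $X_2$ is deterministic), and verify term by term against \eqref{Sec3_Notations_8HK}. No analytic difficulty arises — It\^o's formula is licensed since $H,K$ are absolutely continuous with bounded density and $X\in L^2_{\mathbb F}(\Omega;C([0,T];\mathbb R^n))$ — so the proof is a direct verification with the bulk of the work already templated by \eqref{PX_1X_1}–\eqref{hatPX2}.
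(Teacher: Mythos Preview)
Your proposal is correct and follows essentially the same approach as the paper's proof: both apply It\^o's formula to $\langle H(t)X_1(t),X_1(t)\rangle$ and to $\langle K(t)X_2(t),X_2(t)\rangle$, take expectations, and add the resulting identities to the cost functional in the form \eqref{Sec2_Cost_xEx}. The paper packages the bookkeeping in block-matrix notation ($\Delta\mathbf Q^H$, $\Delta\mathbf S^H$, etc.), whereas you write out the components and explicitly note that the mixed terms such as $\mathbb E\langle HCX_1,\widehat C X_2\rangle$ vanish because $\mathbb E[X_1]=\mathbb E[u_1]=0$ and $X_2,u_2$ are deterministic --- a point the paper leaves implicit in its block-diagonal display.
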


\begin{proof}
Using It\^{o}'s formula to $\langle H(\cdot)(X(\cdot)-\mathbb
E[X(\cdot)]),\ X(\cdot)-\mathbb E [X(\cdot)] \rangle$ on the
interval $[0,T]$, we get
\begin{equation}\label{Sec3_H(x-Ex)}
0 = \mathbb E \bigg\{ \int_0^T \Big[ \big\langle \Delta\mathbf Q^H
\mathbf X,\ \mathbf X \big\rangle +2\big\langle \Delta \mathbf S^H
\mathbf u,\ \mathbf X \big\rangle +\big\langle \Delta\mathbf R^H
\mathbf u,\ \mathbf u \big\rangle \Big] dt +\big\langle \Delta
\mathbf G^H \mathbf X(T),\ \mathbf X(T) \big\rangle \bigg\},
\end{equation}
where
\begin{equation*}\label{Sec2_Notations_bf_Xu}\begin{aligned}
\mathbf X(t)& =\left( \begin{array}{ccc} X(t)-\mathbb E [X(t)] \\
\mathbb E [X(t)]
\end{array} \right),~~~
\mathbf u(t)=\left( \begin{array}{ccc} u(t)-\mathbb E [u(t)] \\
\mathbb E [u(t)]
\end{array} \right),~~~
 \mathbf X(0) = \mathbf x,
\end{aligned}
\end{equation*}
and
\[
\left\{
\begin{aligned}
& \Delta\mathbf Q^H = \left( \begin{array}{ccc} \dot H +HA +A^\top H
+C^\top HC & O \\ O & \widehat C^\top H \widehat C
\end{array} \right),\quad \Delta \mathbf S^H = \left( \begin{array}{ccc} HB +C^\top HD & O \\
O & \widehat C^\top H \widehat D
\end{array} \right), \\
& \Delta \mathbf R^H = \left( \begin{array}{ccc} D^\top HD & O \\
O & \widehat D^\top H \widehat D
\end{array} \right),\qquad \Delta\mathbf G^H = \left( \begin{array}{ccc} -H(T) & O \\
O & O
\end{array} \right).
\end{aligned}
\right.
\]
Similarly, applying It\^{o}'s formula to $\langle K(\cdot) \mathbb
E[X(\cdot)],\ \mathbb E [X(\cdot)] \rangle$ leads to
\begin{equation}\label{Sec3_KEx}
\begin{aligned}
-\big\langle K(0)x,\ x \big\rangle =\ & \mathbb E  \bigg\{
\int_0^T \Big[ \big\langle \Delta\mathbf Q^K \mathbf X,\ \mathbf X
\big\rangle +2\big\langle \Delta \mathbf S^K \mathbf u,\ \mathbf X
\big\rangle +\big\langle \Delta\mathbf R^K \mathbf u,\ \mathbf u
\big\rangle \Big] dt\\
& +\big\langle \Delta \mathbf G^K \mathbf X(T),\ \mathbf X(T)
\big\rangle \bigg\},
\end{aligned}
\end{equation}
where
\[
\left\{
\begin{aligned}
& \Delta\mathbf Q^K = \left( \begin{array}{ccc} O & O\\ O & \dot K
+K \widehat A +\widehat A^\top K
\end{array} \right),\qquad \Delta \mathbf S^K = \left( \begin{array}{ccc} O & O\\ O & K
\widehat B
\end{array} \right), \\
& \Delta \mathbf R^K = O, \qquad \Delta \mathbf G^K = \left(
\begin{array}{ccc} O & O\\ O & -K(T)
\end{array} \right).
\end{aligned}
\right.
\]
Now, by the definition of $(\mathbf Q^{H,K}(\cdot), \mathbf
S^{H,K}(\cdot), \mathbf R^{H,K}(\cdot), \mathbf G^{H,K})$ (see
\eqref{Sec3_Notations_bf_4HK} and \eqref{Sec3_Notations_8HK}),
adding \eqref{Sec3_H(x-Ex)} and \eqref{Sec3_KEx} on both sides of
\eqref{Sec2_Cost_xEx} yields
\[
\begin{aligned}
& J\big(x;u(\cdot) \big) -\big\langle K(0)x,\ x \big\rangle\\
=\ & \mathbb E  \bigg\{ \int_0^T \Big[ \big\langle \big(\mathbf Q
+\Delta\mathbf Q^H +\Delta\mathbf Q^K\big) \mathbf X,\ \mathbf X
\big\rangle +2\big\langle \big(\mathbf S +\Delta\mathbf S^H
+\Delta\mathbf S^K\big) \mathbf u,\ \mathbf X \big\rangle\\
& +\big\langle \big(\mathbf R +\Delta\mathbf R^H +\Delta\mathbf
R^K\big) \mathbf u,\ \mathbf u \big\rangle \Big] dt +\big\langle
\big(\mathbf G +\Delta\mathbf G^H +\Delta\mathbf G^K\big) \mathbf
X(T),\ \mathbf X(T) \big\rangle \bigg\}\\
=\ & J^{H,K}(x;u(\cdot)).
\end{aligned}
\]
The equation \eqref{Sec3_Cost_Relation} is obtained.
\end{proof}

\medskip

\begin{definition}\label{Sec3_Def_RC} \sl
If there exists a pair of functions $(H(\cdot),K(\cdot)) \in
\Lambda[0,T] \times \Lambda[0,T]$ such that the quadruple of
functions $(\mathbf Q^{H,K}(\cdot), \mathbf S^{H,K}(\cdot), \mathbf
R^{H,K}(\cdot), \mathbf G^{H,K})$ satisfies Condition (PD), then we
call $(H(\cdot), K(\cdot))$ a relaxed compensator for Problem (MF-LQ).
\end{definition}

\begin{corollary}\label{coro} \sl
If there exists a relaxed compensator for Problem (MF-LQ), then
Problem (MF-LQ) is well-posed.
\end{corollary}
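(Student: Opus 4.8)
The plan is to chain together the two facts already established in the excerpt: the cost-functional identity of Lemma~\ref{Sec3_Lem_Cost_Relation} and the nonnegativity observation of Remark~\ref{Sec2_Rem}. First I would invoke the definition of a relaxed compensator: by hypothesis there is a pair $(H(\cdot),K(\cdot))\in\Lambda[0,T]\times\Lambda[0,T]$ for which the quadruple $(\mathbf Q^{H,K}(\cdot),\mathbf S^{H,K}(\cdot),\mathbf R^{H,K}(\cdot),\mathbf G^{H,K})$ satisfies Condition~(PD). Since Problem~(MF-LQ)$^{H,K}$ has exactly the same structure as Problem~(MF-LQ) but with this new quadruple of weighting data, Remark~\ref{Sec2_Rem} applies verbatim and gives $J^{H,K}(x;u(\cdot))\ge 0$ for every $x\in\mathbb R^n$ and every $u(\cdot)\in\mathscr U[0,T]$.

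Next I would apply Lemma~\ref{Sec3_Lem_Cost_Relation}, which asserts $J^{H,K}(x;u(\cdot))=J(x;u(\cdot))-\langle K(0)x,\ x\rangle$ for all admissible $u(\cdot)$. Combining this with the previous inequality yields
\[
J(x;u(\cdot)) \;=\; J^{H,K}(x;u(\cdot)) +\langle K(0)x,\ x\rangle \;\ge\; \langle K(0)x,\ x\rangle
\]
for all $u(\cdot)\in\mathscr U[0,T]$. Taking the infimum over $u(\cdot)$ shows
\[
\inf_{u(\cdot)\in\mathscr U[0,T]} J(x;u(\cdot)) \;\ge\; \langle K(0)x,\ x\rangle \;>\; -\infty,
\]
so the infimum is a finite real number. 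By the definition of well-posedness given after the statement of Problem~(MF-LQ), this is exactly the assertion that Problem~(MF-LQ) is well-posed.

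There is essentially no hard step here: the corollary is a direct consequence of Lemma~\ref{Sec3_Lem_Cost_Relation} and Remark~\ref{Sec2_Rem}, with the only point worth a sentence being that Remark~\ref{Sec2_Rem} was stated for Problem~(MF-LQ) but transfers to Problem~(MF-LQ)$^{H,K}$ because the latter is an instance of the former with the data $(\mathbf Q,\mathbf S,\mathbf R,\mathbf G)$ replaced by $(\mathbf Q^{H,K},\mathbf S^{H,K},\mathbf R^{H,K},\mathbf G^{H,K})$, which by the relaxed-compensator hypothesis satisfies Condition~(PD). I would simply write the three-line computation above and conclude.
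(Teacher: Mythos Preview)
Your proposal is correct and follows essentially the same approach as the paper's own proof: invoke Definition~\ref{Sec3_Def_RC} so that $(\mathbf Q^{H,K},\mathbf S^{H,K},\mathbf R^{H,K},\mathbf G^{H,K})$ satisfies Condition~(PD), apply Remark~\ref{Sec2_Rem} to get $J^{H,K}(x;u(\cdot))\ge 0$, then use Lemma~\ref{Sec3_Lem_Cost_Relation} to convert this into the uniform lower bound $J(x;u(\cdot))\ge\langle K(0)x,\ x\rangle$. The paper's proof is just the three-line computation you describe.
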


\begin{proof}
Let $(H(\cdot),K(\cdot))$ be a relaxed compensator. By the
definition, the quadruple $(\mathbf Q^{H,K}(\cdot)$, $\mathbf
S^{H,K}(\cdot), \mathbf R^{H,K}(\cdot), \mathbf G^{H,K})$ satisfies
Condition (PD). Then, for the given $x$ and any
$u(\cdot)\in \mathscr U[0,T]$, Remark \ref{Sec2_Rem} and Lemma
\ref{Sec3_Lem_Cost_Relation} imply
\[
J(x;u(\cdot)) = J^{H,K}\big(x;u(\cdot)\big) +\langle
K(0)x,\ x \rangle \geq \langle K(0)x,\ x \rangle.
\]
The conclusion is obtained.
\end{proof}

\medskip

Now we extend the solvability results (see Theorem
\ref{Sec2.O_THM_Solution} and Theorem \ref{Sec2.F_THM_Solution}) of
Problem (MF-LQ) from the positive definite case to the indefinite
case. Similar to \eqref{Sec2.O_gPsi}, for  any
$\theta = (x,u,y,z)$, $\bar\theta = (\bar x,\bar
u,\bar y,\bar z) \in \mathbb R^{n+m+n+n}$, we define
\begin{equation}
\left\{
\begin{aligned}
& g^{H,K}(r,\theta,\bar\theta) = Q^{H,K}(t)x +\big(\widehat
Q^{H,K}(t) -Q^{H,K}(t)\big)\bar x +S^{H,K}(t)u\\
& \qquad +\big(\widehat S^{H,K}(t) -S^{H,K}(t)\big) \bar u +
A(t)^\top y +\widetilde A(t)^\top \bar y +C^\top(t) z
+\widetilde C(t)^\top \bar
z,\\
& \Psi^{H,K}(r,\theta,\bar\theta) = \big(S^{H,K}(t)\big)^\top
x +\big(\widehat S^{H,K}(t)
-S^{H,K}(t)\big)^\top \bar x + R^{H,K}(t)u \\
& \qquad +\big( \widehat R^{H,K}(t) -R^{H,K}(t) \big)\bar u
+B(t)^\top y +\widetilde B(t)^\top \bar y +D(t)^\top z
+\widetilde D(t)^\top \bar z.
\end{aligned}
\right.
\end{equation}
Instead of \eqref{Sec2.O_Hamil_Sys}, the Hamiltonian system related
to Problem (MF-LQ)$^{H,K}$  is given by
\begin{equation}\label{Sec3.O_Hamil_Sys_HK}
\left\{
\begin{aligned}
& 0 = \Psi^{H,K}\big( \Theta^{H,K}, \mathbb E [\Theta^{H,K}] \big),\quad t\in [0,T],\\
& dX^{H,K} = \Big\{ AX^{H,K} +\widetilde A\mathbb E [X^{H,K}]
+Bu^{H,K} +\widetilde B \mathbb E [u^{H,K}] \Big\} dt\\
& \qquad +\Big\{ CX^{H,K} +\widetilde C\mathbb E [X^{H,K}]
+Du^{H,K} +\widetilde D \mathbb E [u^{H,K}] \Big\} dW,\quad t\in
[0,T],\\
& dY^{H,K} = -g^{H,K} \big( \Theta^{H,K}, \mathbb E [\Theta^{H,K}] \big) dt +Z^{H,K} dW,\quad t\in [0,T],\\
& X^{H,K}(0) = x,\quad Y^{H,K}(T) = G^{H,K}X^{H,K}(T)
+\big(\widehat G^{H,K} -G^{H,K} \big) \mathbb E [X^{H,K}(T)].
\end{aligned}
\right.
\end{equation}

\begin{theorem}\label{th Hamil} \sl
If there exists a relaxed compensator $(H(\cdot),K(\cdot)) \in
\Lambda [0,T] \times \Lambda [0,T]$, then for any initial state
$x$, the Hamiltonian system
\eqref{Sec2.O_Hamil_Sys} admits a unique solution $\Theta^*(\cdot)
\in M^2_{\mathbb F}(0,T)$. Moreover, $(X^*(\cdot),u^*(\cdot))$ is
the unique optimal pair of Problem (MF-LQ).
\end{theorem}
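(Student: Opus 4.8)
The plan is to carry everything over to Problem (MF-LQ)$^{H,K}$, where the relaxed compensator makes the cost data positive definite, and then transfer the conclusions back by a linear change of the adjoint processes. By Definition \ref{Sec3_Def_RC}, the quadruple $(\mathbf Q^{H,K}(\cdot),\mathbf S^{H,K}(\cdot),\mathbf R^{H,K}(\cdot),\mathbf G^{H,K})$ satisfies Condition (PD); since Problem (MF-LQ)$^{H,K}$ has the same state dynamics as Problem (MF-LQ) and cost data $Q^{H,K},\widehat Q^{H,K}-Q^{H,K},\dots$ playing the role of $Q,\widetilde Q,\dots$, Theorem \ref{Sec2.O_THM_Solution} applies verbatim and yields a unique solution $\Theta^{H,K}(\cdot)=(X^{H,K}(\cdot),u^{H,K}(\cdot),Y^{H,K}(\cdot),Z^{H,K}(\cdot))\in M^2_{\mathbb F}(0,T)$ of the Hamiltonian system \eqref{Sec3.O_Hamil_Sys_HK}, with $(X^{H,K}(\cdot),u^{H,K}(\cdot))$ the unique optimal pair of Problem (MF-LQ)$^{H,K}$.

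The heart of the argument is that the assignment
\[
\left\{
\begin{aligned}
& Y := Y^{H,K} + H\big(X^{H,K}-\mathbb E[X^{H,K}]\big) + K\,\mathbb E[X^{H,K}],\\
& Z := Z^{H,K} + H\big[C\big(X^{H,K}-\mathbb E[X^{H,K}]\big) + \widehat C\,\mathbb E[X^{H,K}] + D\big(u^{H,K}-\mathbb E[u^{H,K}]\big) + \widehat D\,\mathbb E[u^{H,K}]\big]
\end{aligned}
\right.
\]
sends solutions of \eqref{Sec3.O_Hamil_Sys_HK} to solutions of \eqref{Sec2.O_Hamil_Sys} with the same $(X,u)$-components, and is invertible (read the formulas backwards to recover $Y^{H,K},Z^{H,K}$). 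The forward equation and its initial condition are untouched. For the terminal condition one inserts $G^{H,K}=G-H(T)$ and $\widehat G^{H,K}=\widehat G-K(T)$ into $Y^{H,K}(T)$ and adds the correction, recovering $Y(T)=GX^{H,K}(T)+\widetilde G\,\mathbb E[X^{H,K}(T)]$. For the backward drift one applies It\^{o}'s formula to $H(X^{H,K}-\mathbb E[X^{H,K}])+K\,\mathbb E[X^{H,K}]$ using \eqref{Sec2_x-Ex}--\eqref{Sec2_Ex}, adds $dY^{H,K}$, and matches the outcome---term by term, separately on the fluctuation part (where $H$ enters) and the mean part (where $K$ enters)---against $-g(\Theta^*,\mathbb E[\Theta^*])$ and $Z$; the defining relations $Q^{H,K}=\dot H+HA+A^\top H+C^\top HC+Q$, $S^{H,K}=HB+C^\top HD+S$, $R^{H,K}=D^\top HD+R$ and their hatted analogues are exactly what closes the computation, and the same relations collapse $\Psi^{H,K}(\Theta^{H,K},\mathbb E[\Theta^{H,K}])$ to $\Psi(\Theta^*,\mathbb E[\Theta^*])$, so the stationarity condition carries over as well. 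Since $H,K$ are continuous, hence bounded on $[0,T]$, the transformed quadruple stays in $M^2_{\mathbb F}(0,T)$. Existence of a solution to \eqref{Sec2.O_Hamil_Sys} thus follows by pushing $\Theta^{H,K}$ forward, and uniqueness follows by pulling any solution of \eqref{Sec2.O_Hamil_Sys} back to a solution of \eqref{Sec3.O_Hamil_Sys_HK}, which is unique.

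It then remains to identify the optimal pair. By Lemma \ref{Sec3_Lem_Cost_Relation}, $J^{H,K}(x;u(\cdot))=J(x;u(\cdot))-\langle K(0)x,x\rangle$ for every $u(\cdot)\in\mathscr U[0,T]$, so the two cost functionals differ by a control-independent constant and Problem (MF-LQ) and Problem (MF-LQ)$^{H,K}$ have the same set of optimal controls. As $(X^{H,K}(\cdot),u^{H,K}(\cdot))$ is the unique optimal pair of Problem (MF-LQ)$^{H,K}$ and coincides with the $(X^*,u^*)$-components produced above, it is the unique optimal pair of Problem (MF-LQ).

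The step I expect to be the main obstacle is the coefficient-by-coefficient verification that the transformed $(X^{H,K},u^{H,K},Y,Z)$ satisfies the backward equation and the stationarity relation of \eqref{Sec2.O_Hamil_Sys}: this is where the precise form \eqref{Sec3_Notations_8HK} of the $(H,K)$-modified coefficients must be used, and where careful bookkeeping of the fluctuation and mean components---which the compensators $H$ and $K$ treat asymmetrically---is essential.
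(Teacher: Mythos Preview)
Your proposal is correct and follows essentially the same approach as the paper: establish a bijection between solutions of \eqref{Sec2.O_Hamil_Sys} and \eqref{Sec3.O_Hamil_Sys_HK} via the linear shift of the adjoint pair (the paper records the map in the direction $\Theta^*\mapsto\Theta^{H,K}$ in \eqref{Sec3.O_Trans_Hamil}, you write its inverse, and your $Z$-formula agrees with theirs once one expands $\widehat C=C+\widetilde C$, $\widehat D=D+\widetilde D$), apply Theorem \ref{Sec2.O_THM_Solution} on the $(H,K)$-side where Condition (PD) holds, and then invoke Lemma \ref{Sec3_Lem_Cost_Relation} to transfer optimality. The paper compresses your It\^o/stationarity verification into ``a straightforward calculation,'' but the content is identical.
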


\begin{proof}
Firstly, for any given $x\in\mathbb R^n$, we prove the
equivalent unique solvability between the Hamiltonian systems
\eqref{Sec2.O_Hamil_Sys} and \eqref{Sec3.O_Hamil_Sys_HK}. In fact,
on the one hand, if $\Theta^* (\cdot) =
(X^*(\cdot),u^*(\cdot),Y(\cdot),Z(\cdot))$ is a solution to
\eqref{Sec2.O_Hamil_Sys}, then a straightforward calculation leads to
\begin{equation}\label{Sec3.O_Trans_Hamil}
\left\{
\begin{aligned}
& X^{H,K} = X^*,\qquad u^{H,K} =u^*,\\
& Y^{H,K} = Y -H\big( X^* -\mathbb E [X^*] \big) -K\mathbb
E[X^*],\\
& Z^{H,K} = Z -H\big( CX^* +\widetilde C\mathbb E [X^*] +Du^*
+\widetilde D\mathbb E [u^*] \big),
\end{aligned}
\right. \qquad t\in [0,T]
\end{equation}
is a solution to \eqref{Sec3.O_Hamil_Sys_HK}. On the other hand, if
$\Theta^{H,K}(\cdot) =
(X^{H,K}(\cdot),u^{H,K}(\cdot),Y^{H,K}(\cdot),Z^{H,K}(\cdot))$ is a
solution to \eqref{Sec3.O_Hamil_Sys_HK}, then due to the
invertibility, the transformation \eqref{Sec3.O_Trans_Hamil} yields
also a solution to \eqref{Sec2.O_Hamil_Sys}. Therefore, the
existence and uniqueness between \eqref{Sec2.O_Hamil_Sys} and
\eqref{Sec3.O_Hamil_Sys_HK} are equivalent.

Secondly, since $(H(\cdot),K(\cdot))$ is a relaxed compensator, by
Definition \ref{Sec3_Def_RC}, the quadruple $(\mathbf
Q^{H,K}(\cdot), \mathbf S^{H,K}(\cdot), \mathbf R^{H,K}(\cdot),
\mathbf G^{H,K})$ satisfies Condition (PD). By Theorem
\ref{Sec2.O_THM_Solution}, the stochastic Hamiltonian system
\eqref{Sec3.O_Hamil_Sys_HK} related to Problem (MF-LQ)$^{H,K}$ admits a unique solution $\Theta^{H,K}(\cdot)$.
Moreover $(X^{H,K}(\cdot), u^{H,K}(\cdot))$ is the unique optimal
pair of Problem (MF-LQ)$^{H,K}$. By the analysis in the
above paragraph, the stochastic Hamiltonian system
\eqref{Sec2.O_Hamil_Sys} related to Problem (MF-LQ)
admits also a unique solution $\Theta^*(\cdot)$. Moreover,
$(X^*(\cdot),u^*(\cdot)) = (X^{H,K}(\cdot),u^{H,K}(\cdot))$. By the
equivalence between the cost functionals $J^{H,K}(u(\cdot))$
and $J(x;u(\cdot))$ (see Lemma \ref{Sec3_Lem_Cost_Relation}),
the unique optimal pair $(X^*(\cdot),u^*(\cdot)) =
(X^{H,K}(\cdot),u^{H,K}(\cdot))$ of Problem (MF-LQ)$^{H,K}$ (which is the conclusion of Theorem
\ref{Sec2.O_THM_Solution}) is also the unique optimal pair of
Problem (MF-LQ). The proof is completed.
\end{proof}

Theorem \ref{th Hamil} solves the MF-LQ problem in  indefinite  condition. Moreover, it also gives a new condition about the solvability of MF-FBSDEs. Please see an example about MF-FBSDEs not satisfying monotonicity condtion in Section 5.2 for details.

\medskip

Next, we turn to the issue of the feedback representation for the
optimal control in the indefinite case. Similar to
\eqref{Sec2.F_Gamma}, we define $\Gamma^{H,K}: [0,T]\times\mathbb
S^n \rightarrow \mathbb R^{m\times n}$ and $\widehat\Gamma^{H,K}:
[0,T] \times \mathbb S^n \times \mathbb S^n \rightarrow \mathbb
R^{m\times n}$ as follows:
\begin{equation*}
\left\{
\begin{aligned}
& \Gamma^{H,K}\big(t, P\big) = - \big[D(t)^\top PD(t)
+R^{H,K}(t)\big]^{-1} \big[PB(t) +C(t)^\top
PD(t) +S^{H,K}(t)\big]^\top,\\
& \widehat\Gamma^{H,K}\big(t, P, \widehat P\big) = - \big[ \widehat
D(t)^\top P \widehat D(t) +\widehat R^{H,K}(t) \big]^{-1} \big[ \widehat P
\widehat B(t) +\widehat C(t)^\top P \widehat D(t) +\widehat
S^{H,K}(t) \big]^\top.
\end{aligned}
\right.
\end{equation*}
Then the system of Riccati equations related to Problem
(MF-LQ)$^{H,K}$ is given by
\begin{equation}\label{Sec3.F_Riccati_HK_Eq1}
\left\{
\begin{aligned}
& \dot P^{H,K} +P^{H,K}A +A^\top P^{H,K} +C^\top P^{H,K}C +Q^{H,K}\\
& \qquad -\Gamma^{H,K}(P^{H,K})^\top \big[D^\top P^{H,K}D +R^{H,K}\big] \Gamma^{H,K}(P^{H,K}) = 0,\quad t\in [0,T],\\
& P^{H,K}(T) = G^{H,K},\\
& D^\top P^{H,K}D +R^{H,K} \gg 0, \qquad t\in [0,T]
\end{aligned}
\right.
\end{equation}
and
\begin{equation}\label{Sec3.F_Riccati_HK_Eq2}
\left\{
\begin{aligned}
& \dot {\widehat P}^{H,K} +\widehat P^{H,K} \widehat A +\widehat A^\top \widehat P^{H,K}
+\widehat C^\top P^{H,K} \widehat C +\widehat Q^{H,K}\\
& \qquad -\widehat \Gamma^{H,K}(P^{H,K}, \widehat P^{H,K})^\top \big[
\widehat D^\top P^{H,K} \widehat D +\widehat R^{H,K} \big]
\widehat\Gamma^{H,K}(P^{H,K}, \widehat P^{H,K})
= 0,\quad t\in [0,T],\\
& \widehat P^{H,K} (T) = \widehat G^{H,K}, \\
& \widehat D^\top P^{H,K} \widehat D + \widehat R^{H,K}  \gg 0, \qquad t\in [0,T].
\end{aligned}
\right.
\end{equation}

\medskip

\begin{theorem}\label{Th Rel} \sl
If there exists a relaxed compensator $(H(\cdot),K(\cdot)) \in
\Lambda [0,T] \times \Lambda[0,T]$, then the system of Riccati
equations \eqref{Sec2.F_Riccati_Eq1} and \eqref{Sec2.F_Riccati_Eq2}
admits a unique pair of solutions $(P(\cdot), \widehat P(\cdot))$ taking
values in $\mathbb S^n \times \mathbb S^n$. Moreover for the initial
state $x\in \mathbb R^n$, the unique optimal pair $(X^*(\cdot),
u^*(\cdot))$ of Problem (MF-LQ) admits the
feedback form given by \eqref{Sec2.F_Optimal_Control} and
\eqref{Sec2.F_Optimal_State}.
\end{theorem}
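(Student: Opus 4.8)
The plan is to deduce everything from the positive-definite results of Section~\ref{sec3}, applied to the auxiliary Problem (MF-LQ)$^{H,K}$, by means of the shift $P(\cdot) = P^{H,K}(\cdot) + H(\cdot)$, $\widehat P(\cdot) = \widehat P^{H,K}(\cdot) + K(\cdot)$. First I would observe that, since $(H(\cdot),K(\cdot))$ is a relaxed compensator, Definition~\ref{Sec3_Def_RC} tells us that $(\mathbf Q^{H,K}(\cdot), \mathbf S^{H,K}(\cdot), \mathbf R^{H,K}(\cdot), \mathbf G^{H,K})$ satisfies Condition (PD). Hence Theorem~\ref{Sec2.F_THM_Solution}, applied to Problem (MF-LQ)$^{H,K}$, furnishes a unique pair $(P^{H,K}(\cdot), \widehat P^{H,K}(\cdot)) \in \big(C^1([0,T];\mathbb S^n_+)\big)^2$ solving the Riccati system \eqref{Sec3.F_Riccati_HK_Eq1}--\eqref{Sec3.F_Riccati_HK_Eq2}, together with the feedback representation of the optimal control of Problem (MF-LQ)$^{H,K}$.

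Next comes the key algebraic step. Using the defining identities \eqref{Sec3_Notations_8HK} one verifies directly that $D^\top PD + R = D^\top P^{H,K}D + R^{H,K}$ and $PB + C^\top PD + S = P^{H,K}B + C^\top P^{H,K}D + S^{H,K}$, so that $\Gamma(P) = \Gamma^{H,K}(P^{H,K})$; substituting $P = P^{H,K} + H$ into the left-hand side of \eqref{Sec2.F_Riccati_Eq1} and invoking $Q^{H,K} = \dot H + HA + A^\top H + C^\top HC + Q$, the $H$-terms cancel and the equation reduces to exactly \eqref{Sec3.F_Riccati_HK_Eq1}; the terminal data $G^{H,K} = G - H(T)$ turns $P^{H,K}(T) = G^{H,K}$ into $P(T) = G$, and $D^\top PD + R = D^\top P^{H,K}D + R^{H,K} \gg 0$. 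The same bookkeeping with $\widehat Q^{H,K}$, $\widehat S^{H,K}$, $\widehat R^{H,K}$, $\widehat G^{H,K}$ gives $\widehat D^\top P \widehat D + \widehat R = \widehat D^\top P^{H,K}\widehat D + \widehat R^{H,K}$ and $\widehat\Gamma(P,\widehat P) = \widehat\Gamma^{H,K}(P^{H,K},\widehat P^{H,K})$, and shows that $\widehat P = \widehat P^{H,K} + K$ solves \eqref{Sec2.F_Riccati_Eq2}. Since $H, K \in \Lambda[0,T]$ are differentiable with bounded derivatives and $P^{H,K}, \widehat P^{H,K}$ are $C^1$, the pair $(P,\widehat P)$ lies in $\big(C^1([0,T];\mathbb S^n)\big)^2$. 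For uniqueness I would run the shift backwards: any $\mathbb S^n\times\mathbb S^n$-valued solution of \eqref{Sec2.F_Riccati_Eq1}--\eqref{Sec2.F_Riccati_Eq2} yields, after subtracting $H$ resp.\ $K$, a solution of \eqref{Sec3.F_Riccati_HK_Eq1}--\eqref{Sec3.F_Riccati_HK_Eq2}, which is unique by Theorem~\ref{Sec2.F_THM_Solution}; hence $(P,\widehat P)$ is the only solution.

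For the feedback representation, Theorem~\ref{th Hamil} together with the transformation \eqref{Sec3.O_Trans_Hamil} already identifies the unique optimal pair $(X^*(\cdot),u^*(\cdot))$ of Problem (MF-LQ) with the unique optimal pair $(X^{H,K}(\cdot),u^{H,K}(\cdot))$ of Problem (MF-LQ)$^{H,K}$. By Theorem~\ref{Sec2.F_THM_Solution} applied to the latter, $u^{H,K} = \Gamma^{H,K}(P^{H,K})\big(X^{H,K} - \mathbb E[X^{H,K}]\big) + \widehat\Gamma^{H,K}(P^{H,K},\widehat P^{H,K})\,\mathbb E[X^{H,K}]$ with $X^{H,K}$ governed by the closed-loop equation whose coefficients are these feedback gains. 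Since the preceding step gives $\Gamma^{H,K}(P^{H,K}) = \Gamma(P)$ and $\widehat\Gamma^{H,K}(P^{H,K},\widehat P^{H,K}) = \widehat\Gamma(P,\widehat P)$, substituting back produces precisely \eqref{Sec2.F_Optimal_Control} with $X^*$ determined by \eqref{Sec2.F_Optimal_State}.

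The step I expect to be the real work is the bookkeeping above: the whole argument hinges on the invariance of the quadratic (Riccati) part of the equation under $P \mapsto P + H$, and it is exactly this invariance that dictates the definitions \eqref{Sec3_Notations_8HK} of $(Q^{H,K}, S^{H,K}, R^{H,K}, G^{H,K})$. Once that cancellation is checked with care, everything else is a transcription of the positive-definite Theorem~\ref{Sec2.F_THM_Solution}.
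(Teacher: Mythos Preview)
Your proposal is correct and follows essentially the same route as the paper's proof: both establish that the shift $P^{H,K}=P-H$, $\widehat P^{H,K}=\widehat P-K$ gives a bijection between solutions of \eqref{Sec2.F_Riccati_Eq1}--\eqref{Sec2.F_Riccati_Eq2} and \eqref{Sec3.F_Riccati_HK_Eq1}--\eqref{Sec3.F_Riccati_HK_Eq2}, invoke Theorem~\ref{Sec2.F_THM_Solution} on the latter, and then use the identity $\Gamma(P)=\Gamma^{H,K}(P^{H,K})$, $\widehat\Gamma(P,\widehat P)=\widehat\Gamma^{H,K}(P^{H,K},\widehat P^{H,K})$ to carry the feedback back. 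The only cosmetic difference is that you appeal to Theorem~\ref{th Hamil} to identify the optimal pairs, whereas the paper re-invokes Lemma~\ref{Sec3_Lem_Cost_Relation} directly; since Theorem~\ref{th Hamil} itself rests on that lemma, the two arguments are equivalent.
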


\begin{proof}
Firstly, we prove the equivalent unique solvability between the
system of Riccati equations
\eqref{Sec2.F_Riccati_Eq1}-\eqref{Sec2.F_Riccati_Eq2} and
\eqref{Sec3.F_Riccati_HK_Eq1}-\eqref{Sec3.F_Riccati_HK_Eq2}. In
fact, on one hand, if $(P(\cdot),\widehat P(\cdot))$ taking values in
$\mathbb S^n \times \mathbb S^n$ is a solution to
\eqref{Sec2.F_Riccati_Eq1}-\eqref{Sec2.F_Riccati_Eq2}, then by a
straightforward calculation,
\begin{equation}\label{Sec3.F_Trans_Riccati}
P^{H,K}(\cdot) = P(\cdot) -H(\cdot),\qquad \widehat P^{H,K}(\cdot) =
\widehat P(\cdot) -K(\cdot)
\end{equation}
is a solution to
\eqref{Sec3.F_Riccati_HK_Eq1}-\eqref{Sec3.F_Riccati_HK_Eq2}. On the
other hand, if $(P^{H,K}(\cdot), \widehat P^{H,K}(\cdot))$ taking values in
$\mathbb S^n \times\mathbb S^n$ is a solution to
\eqref{Sec3.F_Riccati_HK_Eq1}-\eqref{Sec3.F_Riccati_HK_Eq2}, then
the inverse transformation of \eqref{Sec3.F_Trans_Riccati} provides
a solution to \eqref{Sec2.F_Riccati_Eq1}-\eqref{Sec2.F_Riccati_Eq2}.
Therefore, the existence and uniqueness between
\eqref{Sec2.F_Riccati_Eq1}-\eqref{Sec2.F_Riccati_Eq2} and
\eqref{Sec3.F_Riccati_HK_Eq1}-\eqref{Sec3.F_Riccati_HK_Eq2} are
equivalent.

\medskip

Since $(H(\cdot),K(\cdot))$ is a relaxed compensator, then the
quadruple $(\mathbf Q^{H,K}(\cdot), \mathbf S^{H,K}(\cdot), \mathbf
R^{H,K}(\cdot)$, $\mathbf G^{H,K} )$ satisfies Condition (PD). By
Theorem \ref{Sec2.F_THM_Solution}, the system of Riccati equations
\eqref{Sec3.F_Riccati_HK_Eq1}-\eqref{Sec3.F_Riccati_HK_Eq2} admits a
unique solution. By the analysis in the previous paragraph, the same
is true for the system
\eqref{Sec2.F_Riccati_Eq1}-\eqref{Sec2.F_Riccati_Eq2}.

\medskip

Let
\begin{equation}\label{Sec3.F_Optimal_Control_HK}
u^{H,K} = \Gamma^{H,K}\big(P^{H,K}\big) \big( X^{H,K} -\mathbb
E[X^{H,K}] \big) +\widehat\Gamma^{H,K}\big(P^{H,K},\Pi^{H,K}\big)
\mathbb E [X^{H,K}],\quad t \in [0,T],
\end{equation}
where $X^{H,K}(\cdot)$ satisfies
\begin{equation}\label{Sec3.F_Optimal_State_HK}
\left\{
\begin{aligned}
& dX^{H,K} = \Big\{ \big( A+B\Gamma^{H,K}(P^{H,K}) \big) \big(
X^{H,K} -\mathbb E [X^{H,K}] \big)\\
& \qquad  +\big( \widehat A +\widehat
B\widehat\Gamma^{H,K}(P^{H,K},\Pi^{H,K}) \big) \Big\} dt +\Big\{
\big( C+D\Gamma^{H,K}(P^{H,K}) \big) \big( X^{H,K} -\mathbb
E[X^{H,K}] \big)\\
& \qquad +\big( \widehat C +\widehat
D\widehat\Gamma^{H,K}(P^{H,K},\Pi^{H,K})
\big) \Big\} dW,\quad t\in [0,T],\\
& X^{H,K}(0) = x.
\end{aligned}
\right.
\end{equation}
Theorem \ref{Sec2.F_THM_Solution} implies that the admissible pair
$(X^{H,K}(\cdot),u^{H,K}(\cdot))$ is optimal for Problem
(MF-LQ)$^{H,K}$. It is easy to verify that
\[
\Gamma\big( P \big) = \Gamma^{H,K}\big( P^{H,K} \big), \qquad
\widehat\Gamma\big( P, \widehat P \big) = \widehat\Gamma^{H,K} \big(
P^{H,K}, \widehat P^{H,K} \big).
\]
Therefore, the admissible pair $(X^*(\cdot),u^*(\cdot))$ defined by
\eqref{Sec2.F_Optimal_Control}-\eqref{Sec2.F_Optimal_State} is the
same as $(X^{H,K}(\cdot), u^{H,K}(\cdot))$ defined by
\eqref{Sec3.F_Optimal_Control_HK}-\eqref{Sec3.F_Optimal_State_HK}.
By Lemma \ref{Sec3_Lem_Cost_Relation}, the unique optimal pair
$(X^*(\cdot),u^*(\cdot)) = (X^{H,K}(\cdot),u^{H,K}(\cdot))$ of
Problem (MF-LQ)$^{H,K}$ is also the unique optimal pair
of Problem (MF-LQ). The proof is completed.
\end{proof}

If there exist nonhomogeneous terms in system \eqref{Sec2_Sys} and linear terms in cost functional \eqref{Sec2_Cost}, these terms do not affect the well-posedness of Problem (MF-LQ). We can parallely derive the corresponding results similar to the theoretical ones established in this paper. For example, consider $J(x;u(\cdot))$  in the form of \eqref{Sec2_Cost} plus a linear term $\langle g,~\mathbb E [X(T)]\rangle$ with an $n$-dimensional constant vector $g$ as 
\begin{equation}\label{tilde J}
\widetilde J(x;u(\cdot))=J(x;u(\cdot))+2\langle g,~\mathbb E [X(T)]\rangle.
\end{equation}
The similar results can be parallely obtained. We present the following corollary as one example in details. For convenience, we call MF-LQ problem with respect to system \eqref{Sec2_Sys} and cost functional \eqref{tilde J} as {\bf Problem (MF-LQ)$^L$} and denote $\mathcal D(\varphi)=-\big[ \widehat D^\top P
\widehat D +\widehat R \big]^{-1}  \widehat B^\top \varphi$. 

\begin{corollary}\label{corollary-linear} \sl
If there exists a relaxed compensator $(H(\cdot),K(\cdot)) \in
\Lambda [0,T] \times \Lambda[0,T]$, Problem (MF-LQ)$^L$  admits  
a unique optimal feedback control
 \begin{equation*}\label{Sec2.F_Optimal_Control-linear}
 \begin{aligned}
u^* &= \Gamma\big(P\big) \big( X^* -\mathbb E [X^*] \big)
+\widehat\Gamma\big(P,\widehat P\big) \mathbb E [X^*]+\mathcal D(\varphi)
 \end{aligned}
\end{equation*}
with $\varphi$ satisfying 
\begin{equation*}
\left\{\begin{aligned}
		&\dot \varphi+\big[\widehat A+\widehat B\widehat \Gamma(P,\widehat P)\big]^\top \varphi=0,\quad t\in [0,T],\\
		&\varphi(T)=g,
	\end{aligned}
	\right.
\end{equation*}
and $(P, \widehat P)$ being the unique solution of Riccati equations \eqref{Sec2.F_Riccati_Eq1}-\eqref{Sec2.F_Riccati_Eq2}, where the optimal state $X^*(\cdot)$ satisfies 
\begin{equation*}\label{Sec2.F_Optimal_State-linear}
\left\{
\begin{aligned}
& dX^* = \Big\{ \big( A+B\Gamma(P) \big) \big( X^* -\mathbb E [X^*]\big) +\big( \widehat A +\widehat B\widehat\Gamma(P,\widehat P) \big)\mathbb E[X^*]  -\widehat B\big[ \widehat D^\top P
\widehat D +\widehat R \big]^{-1}  \widehat B^\top \varphi \Big\} dt\\
& \quad  +\Big\{ \big( C+D\Gamma(P) \big) \big( X^* -\mathbb E[X^*] \big)+\big( \widehat C +\widehat D\widehat\Gamma(P,\widehat P)\big)\mathbb E[X^*]-\widehat D\big[ \widehat D^\top P
\widehat D +\widehat R \big]^{-1}  \widehat B^\top \varphi \Big\}dW(t),\\
& X^*(0) = x.
\end{aligned}
\right.
\end{equation*}
 \end{corollary}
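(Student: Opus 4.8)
The plan is to follow the two-step square-completion argument in the proof of Theorem \ref{Sec2.F_THM_Solution}, with the linear term $2\langle g,\ \mathbb E[X(T)]\rangle$ absorbed by the auxiliary adjoint function $\varphi(\cdot)$. Since this extra term is affine in the state, it changes neither the convexity of the cost functional nor the well-posedness of the problem; the existence of a relaxed compensator together with Theorem \ref{Th Rel} already provides the unique pair $(P(\cdot),\widehat P(\cdot))$ solving \eqref{Sec2.F_Riccati_Eq1}--\eqref{Sec2.F_Riccati_Eq2}, so in particular $D^\top PD+R\gg0$ and $\widehat D^\top P\widehat D+\widehat R\gg0$. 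Consequently $\Gamma(P)$, $\widehat\Gamma(P,\widehat P)$, $\mathcal D(\varphi)$ and $\varphi(\cdot)$ (the solution of its homogeneous linear backward ODE) are all well-defined. As before I would set $X_1=X-\mathbb E[X]$, $X_2=\mathbb E[X]$, $u_1=u-\mathbb E[u]$, $u_2=\mathbb E[u]$, apply It\^o's formula to $\langle P(\cdot)X_1(\cdot),\ X_1(\cdot)\rangle$, substitute \eqref{Sec2.F_Riccati_Eq1}, and obtain
\begin{equation*}
\widetilde J(x;u(\cdot)) = \mathbb E\int_0^T \big\langle (D^\top PD+R)[u_1-\Gamma(P)X_1],\ u_1-\Gamma(P)X_1 \big\rangle\,dt + \widehat J_2(x;u(\cdot)) + 2\langle g,\ X_2(T)\rangle ,
\end{equation*}
where $\widehat J_2(x;u(\cdot))$ denotes the deterministic functional of $(X_2(\cdot),u_2(\cdot))$ that appears in the proof of Theorem \ref{Sec2.F_THM_Solution}. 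Since $2\langle g,\ X_2(T)\rangle$ involves neither $X_1$ nor $u_1$, the first component of the optimal control is still forced to be $u_1^*=\Gamma(P)X_1^*$.

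\textbf{Deterministic part.} It remains to minimise $\widehat J_2(x;u(\cdot))+2\langle g,\ X_2(T)\rangle$ over $u_2$ subject to $\dot X_2=\widehat AX_2+\widehat Bu_2$, $X_2(0)=x$. Here I would differentiate $\langle\widehat P X_2,\ X_2\rangle+2\langle\varphi,\ X_2\rangle$, integrate over $[0,T]$, and add the resulting identity to $\widehat J_2+2\langle g,\ X_2(T)\rangle$. The terminal value $\varphi(T)=g$ reproduces the linear terminal term; Riccati equation \eqref{Sec2.F_Riccati_Eq2} cancels the purely quadratic $X_2$-terms; and the ODE $\dot\varphi+[\widehat A+\widehat B\widehat\Gamma(P,\widehat P)]^\top\varphi=0$ is chosen precisely so that the terms linear in $X_2$ vanish after the square completion in $u_2$ (with weight $\widehat D^\top P\widehat D+\widehat R$). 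This yields
\begin{align*}
\widehat J_2(x;u(\cdot))+2\langle g,\ X_2(T)\rangle ={}& \langle\widehat P(0)x,\ x\rangle+2\langle\varphi(0),\ x\rangle+c(\varphi)\\
&{}+\int_0^T \big\langle (\widehat D^\top P\widehat D+\widehat R)\big[u_2-\widehat\Gamma(P,\widehat P)X_2-\mathcal D(\varphi)\big],\ u_2-\widehat\Gamma(P,\widehat P)X_2-\mathcal D(\varphi) \big\rangle\,dt ,
\end{align*}
with $c(\varphi)=-\int_0^T\big\langle\big[\widehat D^\top P\widehat D+\widehat R\big]^{-1}\widehat B^\top\varphi,\ \widehat B^\top\varphi\big\rangle\,dt$ independent of the control. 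As $\widehat D^\top P\widehat D+\widehat R\gg0$, the integral is nonnegative and vanishes exactly when $u_2=\widehat\Gamma(P,\widehat P)X_2+\mathcal D(\varphi)$, the corresponding closed-loop state being the one displayed in the statement. Combining the two steps, $u^*=u_1^*+u_2^*=\Gamma(P)(X^*-\mathbb E[X^*])+\widehat\Gamma(P,\widehat P)\mathbb E[X^*]+\mathcal D(\varphi)$ is the unique optimal feedback control, and $\widetilde J(x;u^*(\cdot))=\langle\widehat P(0)x,\ x\rangle+2\langle\varphi(0),\ x\rangle+c(\varphi)$.

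\textbf{Main obstacle.} There is no genuinely new analytic difficulty, since the relaxed compensator reduces everything to the positive definite machinery of Section \ref{sec3}. The delicate point is the bookkeeping in the deterministic step: one must track the cross terms generated by $2\langle\varphi,\ X_2\rangle$ in the integration-by-parts identity and verify that the stated $\varphi$-ODE --- governed by the transpose of the closed-loop drift $\widehat A+\widehat B\widehat\Gamma(P,\widehat P)$ --- is exactly what annihilates the linear-in-$X_2$ remainder after completing the square. One should also note that $\mathcal D(\varphi)\in\mathscr U[0,T]$ and that strict convexity, guaranteed by $D^\top PD+R\gg0$ and $\widehat D^\top P\widehat D+\widehat R\gg0$, yields uniqueness of the optimum, exactly as in Theorem \ref{Sec2.F_THM_Solution}.
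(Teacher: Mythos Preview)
Your proposal is correct and follows essentially the same route the paper indicates: the paper does not give a detailed argument but simply states that the corollary ``can be proved, similar to the proof of Theorem \ref{Sec2.F_THM_Solution}'' (invoking also Theorem~5.2 of \cite{Sun-2017}), and your two-step square-completion with the auxiliary function $\varphi$ is precisely that programme carried out in full. In fact you supply more detail than the paper, correctly identifying that the $\varphi$-ODE $\dot\varphi+[\widehat A+\widehat B\widehat\Gamma(P,\widehat P)]^\top\varphi=0$ is exactly what kills the linear-in-$X_2$ remainder after completing the square, and that strict positivity of $D^\top PD+R$ and $\widehat D^\top P\widehat D+\widehat R$ (furnished by Theorem~\ref{Th Rel}) gives uniqueness.
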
 
Based on the result of Theorem 5.2 in \cite{Sun-2017}, this corollary can be proved, similar to the proof of Theorem \ref{Sec2.F_THM_Solution}. We omit the proof here. 

\begin{remark} \sl
When there exists a relaxed compensator $(H(\cdot),K(\cdot))\in
\Lambda [0,T] \times \Lambda[0,T]$, from
\eqref{Sec3.F_Trans_Riccati}, we can derive the following
inequalities:
\begin{equation}
H(\cdot) \leq P(\cdot),\qquad K(\cdot) \leq \widehat P(\cdot),
\end{equation}
where $(P(\cdot),\widehat P(\cdot))$ is the solution to the system of
Riccati equations.
\end{remark}

In the rest of this section, we shall propose a necessary and
sufficient condition for a relaxed compensator. For this aim, we
borrow a basic result from the theory of linear algebra.

\begin{lemma}[Schur's lemma ]\label{Sec3_Lem_Schur} \sl
Let $A\in \mathbb S^n$, $B\in \mathbb S^m$, and $C\in \mathbb
R^{n\times m}$. Then the following two statements are equivalent:
\begin{enumerate}[\rm(i).]
\item $B>0$ and $A-CB^{-1}C^\top \geq 0$;
\item $B>0$ and $\left( \begin{array}{ccc} A & C \\ C^\top & B \end{array}
\right) \geq 0$.
\end{enumerate}
\end{lemma}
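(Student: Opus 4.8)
The plan is to prove the two implications (i)$\Rightarrow$(ii) and (ii)$\Rightarrow$(i) directly, using the congruence transformation that diagonalizes the block matrix. The key observation is that, since $B>0$ is part of both statements, $B^{-1}$ exists and the block $\bigl(\begin{smallmatrix} A & C \\ C^\top & B \end{smallmatrix}\bigr)$ can be block-diagonalized by a congruence: one checks the factorization
\begin{equation*}
\left( \begin{array}{cc} I_n & -CB^{-1} \\ O & I_m \end{array} \right)
\left( \begin{array}{cc} A & C \\ C^\top & B \end{array} \right)
\left( \begin{array}{cc} I_n & O \\ -B^{-1}C^\top & I_m \end{array} \right)
= \left( \begin{array}{cc} A-CB^{-1}C^\top & O \\ O & B \end{array} \right).
\end{equation*}
The left and right outer factors are transposes of each other and each is invertible (indeed unipotent), so this is a congruence transformation. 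Hence the middle matrix on the left is positive semidefinite if and only if the block-diagonal matrix on the right is positive semidefinite, because congruence by an invertible matrix preserves the inertia (Sylvester's law of inertia), and in particular preserves positive semidefiniteness.

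First I would record the congruence identity above, verifying it by a direct block multiplication (this is the only computation, and it is routine). Next, note that the block-diagonal matrix $\bigl(\begin{smallmatrix} A-CB^{-1}C^\top & O \\ O & B \end{smallmatrix}\bigr)$ is positive semidefinite precisely when both diagonal blocks are positive semidefinite, i.e. when $A-CB^{-1}C^\top \geq 0$ and $B\geq 0$; under our standing hypothesis $B>0$ the latter is automatic. Combining this with the congruence invariance gives: assuming $B>0$,
\begin{equation*}
\left( \begin{array}{cc} A & C \\ C^\top & B \end{array} \right) \geq 0
\iff A-CB^{-1}C^\top \geq 0,
\end{equation*}
which is exactly the equivalence of (i) and (ii).

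There is no real obstacle here; the only point requiring a modicum of care is justifying that congruence by an invertible matrix preserves positive semidefiniteness, which follows immediately from the definition: for any $v$, writing $w = M v$ with $M$ the right outer factor, we have $v^\top (M^\top N M) v = w^\top N w$, and as $v$ ranges over $\mathbb{R}^{n+m}$ so does $w$ since $M$ is invertible. Thus $M^\top N M \geq 0 \iff N \geq 0$. Applying this with $N = \bigl(\begin{smallmatrix} A & C \\ C^\top & B \end{smallmatrix}\bigr)$ completes the argument.
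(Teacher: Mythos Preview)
Your proof is correct and is the standard argument via the block-congruence factorization; the identity you wrote down checks out, and your justification that congruence by an invertible matrix preserves positive semidefiniteness is fine. Note that the paper itself does not prove this lemma at all---it simply states it as ``a basic result from the theory of linear algebra'' and uses it as a black box---so there is nothing to compare against; your write-up supplies exactly the missing proof one would expect.
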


\medskip

\noindent Let $(H(\cdot), K(\cdot)) \in \Lambda[0,T] \times \Lambda
[0,T]$. We introduce

\medskip

\noindent{\bf Condition (RC).} The following two groups of
inequalities hold (the argument $t$ is suppressed):
\begin{equation}\label{Sec3_RC_Ineq1}
\mbox{(i).} \  \left\{
\begin{aligned}
& \dot H +HA +A^\top H +C^\top HC +Q\\
& \quad -\big[HB +C^\top HD +S\big] \big[D^\top HD +R\big]^{-1}
\big[HB
+C^\top HD +S\big]^\top \geq 0,\quad t\in [0,T],\\
& H(T) \leq G,\\
& D^\top HD +R \gg 0, \qquad t\in [0,T]
\end{aligned}
\right.
\end{equation}
and
\begin{equation}\label{Sec3_RC_Ineq2}
\mbox{(ii).}\ \left\{
\begin{aligned}
& \dot K +K\widehat A +\widehat A^\top K
+\widehat C^\top H \widehat C +\widehat Q\\
& \quad -\big[ K\widehat B +\widehat C^\top H \widehat D +\widehat S
\big] \big[ \widehat D^\top H \widehat D +\widehat R \big]^{-1}
\big[ K\widehat B +\widehat C^\top H \widehat D +\widehat S
\big]^\top
\geq 0,\quad t\in [0,T], \\
& K(T) \leq \widehat G, \\
& \widehat D^\top H\widehat D +\widehat R \gg 0,\qquad t\in [0,T].
\end{aligned}
\right.
\end{equation}

\begin{proposition}\label{Pro} \sl
A pair of functions $(H(\cdot),K(\cdot)) \in \Lambda[0,T] \times
\Lambda[0,T]$ is a relaxed compensator for Problem (MF-LQ) if and
only if Condition (RC) holds.
\end{proposition}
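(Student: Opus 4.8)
The plan is to unfold Definition~\ref{Sec3_Def_RC}, use the block-diagonal structure of the compensated data to split Condition~(PD) into two independent pieces, and then apply Schur's lemma to each piece to recover exactly the two groups of inequalities in Condition~(RC). By Definition~\ref{Sec3_Def_RC}, $(H(\cdot),K(\cdot))$ is a relaxed compensator if and only if the quadruple $(\mathbf Q^{H,K}(\cdot),\mathbf S^{H,K}(\cdot),\mathbf R^{H,K}(\cdot),\mathbf G^{H,K})$ satisfies Condition~(PD), i.e.
\[
\begin{pmatrix} \mathbf Q^{H,K} & \mathbf S^{H,K} \\ (\mathbf S^{H,K})^\top & \mathbf R^{H,K} \end{pmatrix}\geq 0,\qquad \mathbf R^{H,K}\gg 0,\qquad \mathbf G^{H,K}\geq 0,\qquad t\in[0,T].
\]

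First I would exploit the block-diagonal form~\eqref{Sec3_Notations_bf_4HK}: each of $\mathbf Q^{H,K}$, $\mathbf S^{H,K}$, $\mathbf R^{H,K}$, $\mathbf G^{H,K}$ decouples into an ``$X-\mathbb E[X]$'' block and an ``$\mathbb E[X]$'' block. Permuting the coordinates of the doubled state/control from $(X-\mathbb E[X],\mathbb E[X],u-\mathbb E[u],\mathbb E[u])$ to $(X-\mathbb E[X],u-\mathbb E[u],\mathbb E[X],\mathbb E[u])$ is a similarity by a permutation matrix, hence preserves (semi)definiteness, and it turns the coupling matrix above into the block-diagonal matrix with blocks
\[
\mathcal M_1=\begin{pmatrix} Q^{H,K} & S^{H,K} \\ (S^{H,K})^\top & R^{H,K} \end{pmatrix},\qquad \mathcal M_2=\begin{pmatrix} \widehat Q^{H,K} & \widehat S^{H,K} \\ (\widehat S^{H,K})^\top & \widehat R^{H,K} \end{pmatrix}.
\]
Therefore Condition~(PD) for the compensated data is equivalent to the conjunction of $\mathcal M_1\geq 0$, $\mathcal M_2\geq 0$, $R^{H,K}\gg 0$, $\widehat R^{H,K}\gg 0$, $G^{H,K}\geq 0$ and $\widehat G^{H,K}\geq 0$.

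Next I would apply Schur's lemma (Lemma~\ref{Sec3_Lem_Schur}) to each of $\mathcal M_1$ and $\mathcal M_2$. Since $R^{H,K}\gg 0$ entails in particular $R^{H,K}>0$, Lemma~\ref{Sec3_Lem_Schur} gives $\mathcal M_1\geq 0\iff Q^{H,K}-S^{H,K}(R^{H,K})^{-1}(S^{H,K})^\top\geq 0$, and similarly for $\mathcal M_2$ with pivot $\widehat R^{H,K}$. Substituting the explicit formulas from~\eqref{Sec3_Notations_8HK}, namely $Q^{H,K}=\dot H+HA+A^\top H+C^\top HC+Q$, $S^{H,K}=HB+C^\top HD+S$, $R^{H,K}=D^\top HD+R$, $G^{H,K}=G-H(T)$, together with their hatted counterparts, converts the two Schur complements into exactly the first (matrix) inequalities of groups~(i) and~(ii) of Condition~(RC); the requirements $R^{H,K}\gg 0$ and $\widehat R^{H,K}\gg 0$ become $D^\top HD+R\gg 0$ and $\widehat D^\top H\widehat D+\widehat R\gg 0$; and $G^{H,K}\geq 0$, $\widehat G^{H,K}\geq 0$ become $H(T)\leq G$, $K(T)\leq\widehat G$. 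Reading this chain of equivalences in both directions yields the stated ``if and only if''.

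I do not expect a genuine obstacle here; the argument is essentially bookkeeping built on Lemma~\ref{Sec3_Lem_Schur}. The only point requiring a little care is the distinction between $>0$ and $\gg 0$: Schur's lemma is stated with a positive-definite pivot $B>0$, whereas Conditions~(PD) and~(RC) both impose the stronger uniform positivity $\gg 0$ on the $\mathbf R$-type term. This causes no trouble because $\gg 0$ implies $>0$, so the pivot hypothesis of Lemma~\ref{Sec3_Lem_Schur} is always at hand, while the uniform-positivity conditions on $R^{H,K}$ and $\widehat R^{H,K}$ are simply carried over verbatim between the two formulations rather than being produced by Schur's lemma.
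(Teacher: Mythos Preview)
Your proof is correct and follows essentially the same approach as the paper: unfold Definition~\ref{Sec3_Def_RC}, use the block-diagonal structure of the compensated data, and apply Schur's lemma (Lemma~\ref{Sec3_Lem_Schur}) to each block to obtain Condition~(RC). Your version is in fact more explicit than the paper's, spelling out the permutation argument and the $>0$ versus $\gg 0$ point that the paper leaves to ``straightforward calculations''.
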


\begin{proof}
By Definition \ref{Sec3_Def_RC}, $(H(\cdot),K(\cdot))$ is a relaxed
compensator if and only if Condition (PD) holds for the quadruple
$(\mathbf Q^{H,K}(\cdot), \mathbf S^{H,K}(\cdot), \mathbf
R^{H,K}(\cdot), \mathbf G^{H,K})$. By Lemma \ref{Sec3_Lem_Schur},
the first inequality in Condition (PD) is equivalent to
\[
\left\{
\begin{aligned}
& Q^{H,K} -S^{H,K} (R^{H,K})^{-1}(S^{H,K})^\top \geq 0,\\
& \widehat Q^{H,K} -\widehat S^{H,K} (\widehat
R^{H,K})^{-1}(\widehat S^{H,K})^\top \geq 0.
\end{aligned}
\right.
\]
By some straightforward calculations, we verify that Condition (PD) is equivalent to Condition (RC). The proof is completed.
\end{proof}

\begin{remark} \sl
By comparing the system of Riccati equations
\eqref{Sec2.F_Riccati_Eq1}-\eqref{Sec2.F_Riccati_Eq2} with the
system of inequalities \eqref{Sec3_RC_Ineq1}-\eqref{Sec3_RC_Ineq2}
in Condition (RC), we find the following two facts.
\begin{enumerate}[\rm(i).]
\item If the system of Riccati equations
\eqref{Sec2.F_Riccati_Eq1}-\eqref{Sec2.F_Riccati_Eq2} is solvable,
then the solution $(P(\cdot),\widehat P(\cdot))$ is a relaxed compensator
for Problem (MF-LQ). Consequently, in the indefinite case, the
solvability of the system of Riccati equations
\eqref{Sec2.F_Riccati_Eq1}-\eqref{Sec2.F_Riccati_Eq2} implies the
solvability of Problem (MF-LQ).
\item The first two equations
in \eqref{Sec2.F_Riccati_Eq1} and two equations in
\eqref{Sec2.F_Riccati_Eq2} are relaxed into the corresponding
inequalities in \eqref{Sec3_RC_Ineq1}-\eqref{Sec3_RC_Ineq2}. The solvability of the system of inequalities
\eqref{Sec3_RC_Ineq1}-\eqref{Sec3_RC_Ineq2} also implies the
solvability of Problem (MF-LQ). This can be regarded as an
explanation of the notion of relaxed compensators from the viewpoint of
Riccati equations.
\end{enumerate}
\end{remark}

Then, we present the relationship between relaxed compensator and solutions of Riccati equations by a corollary.
\begin{corollary} \sl
A relaxed compensator $(H(\cdot),K(\cdot)) \in
\Lambda [0,T] \times \Lambda[0,T]$ exists if and only if the system
of Riccati equations \eqref{Sec2.F_Riccati_Eq1} and
\eqref{Sec2.F_Riccati_Eq2} admits a unique pair of solutions
$(P(\cdot), \widehat P(\cdot))$ taking values in $\mathbb S^n \times
\mathbb S^n$.
\end{corollary}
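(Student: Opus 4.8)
The plan is to obtain both implications directly from results already proved, so very little new work is required.

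For the direction ``relaxed compensator $\Rightarrow$ Riccati solvable'', I would simply invoke Theorem \ref{Th Rel}: its hypothesis is exactly the existence of a relaxed compensator $(H(\cdot),K(\cdot))\in\Lambda[0,T]\times\Lambda[0,T]$, and its conclusion includes that the system \eqref{Sec2.F_Riccati_Eq1}--\eqref{Sec2.F_Riccati_Eq2} admits a unique pair of solutions $(P(\cdot),\widehat P(\cdot))$ valued in $\mathbb S^n\times\mathbb S^n$. Nothing further is needed for this half.

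For the converse, suppose $(P(\cdot),\widehat P(\cdot))\in\mathbb S^n\times\mathbb S^n$ solves \eqref{Sec2.F_Riccati_Eq1}--\eqref{Sec2.F_Riccati_Eq2}. I would set $(H(\cdot),K(\cdot)):=(P(\cdot),\widehat P(\cdot))$ and verify that this pair satisfies Condition (RC), after which Proposition \ref{Pro} delivers that it is a relaxed compensator. Checking Condition (RC) is routine: substituting $(H,K)=(P,\widehat P)$ into the first lines of \eqref{Sec3_RC_Ineq1}--\eqref{Sec3_RC_Ineq2} reproduces exactly the left-hand sides of the Riccati equations, which vanish and are therefore $\geq 0$; the terminal requirements become $P(T)=G\leq G$ and $\widehat P(T)=\widehat G\leq\widehat G$; and the nondegeneracy requirements $D^\top PD+R\gg 0$ and $\widehat D^\top P\widehat D+\widehat R\gg 0$ are already imposed inside \eqref{Sec2.F_Riccati_Eq1}--\eqref{Sec2.F_Riccati_Eq2}.

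The one point I would treat with a little care is membership of $P(\cdot),\widehat P(\cdot)$ in $\Lambda[0,T]$, i.e. that $\dot P,\dot{\widehat P}\in L^\infty(0,T;\mathbb S^n)$, since Proposition \ref{Pro} presupposes $(H,K)\in\Lambda[0,T]\times\Lambda[0,T]$. From the Riccati equations, $\dot P$ and $\dot{\widehat P}$ are built from the $L^\infty$ coefficients, from $P,\widehat P$ (continuous, hence bounded on $[0,T]$), and from the inverses $[D^\top PD+R]^{-1}$ and $[\widehat D^\top P\widehat D+\widehat R]^{-1}$; the $\gg 0$ conditions bound these inverses below by some $\delta I_m$, while boundedness of $P$ and $D$ bounds them above, so they are essentially bounded, whence $\dot P,\dot{\widehat P}\in L^\infty$ and $P,\widehat P\in\Lambda[0,T]$. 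Combining the two directions gives the stated equivalence. I do not expect a genuine obstacle here: the substance has been pre-loaded into Theorem \ref{Th Rel} and Proposition \ref{Pro}, and this corollary is essentially a bookkeeping statement, with the $\Lambda[0,T]$-regularity check being the only thing not immediate from inspection.
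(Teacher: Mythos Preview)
Your proposal is correct and follows essentially the same approach as the paper: one direction by Theorem \ref{Th Rel}, the other by taking $(H,K)=(P,\widehat P)$ and invoking Proposition \ref{Pro} after checking Condition (RC). Your additional verification that $P,\widehat P\in\Lambda[0,T]$ is a detail the paper omits (and your wording there is slightly inverted---the $\gg 0$ condition bounds the inverse \emph{above}, not below---but the conclusion that the inverse is essentially bounded is correct).
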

\begin{proof}
The sufficient condition could be obtained  by Theorem \ref{Th Rel}. Next, we prove the necessary condition. If $(P(\cdot), \widehat P(\cdot))$ is the solution of Riccati
equations \eqref{Sec2.F_Riccati_Eq1} and \eqref{Sec2.F_Riccati_Eq2}, which satisfies \eqref{Sec3_RC_Ineq1} and \eqref{Sec3_RC_Ineq2}.  From Proposition \ref{Pro}, $(P(\cdot), \widehat P(\cdot))$ is a relaxed compensator.
\end{proof}

Next, we explain the effect of $K$ as a relaxed compensator.
\begin{remark} \sl
    Different from the classic LQ problem, because of the existence of the mean field item $\mathbb E [X]$ in system, $K(\cdot)$ plays a key role as one of the compensator. Now, we will explain this point.

    For simplicity, we consider the following Problem (MF-LQ) with $t$ suppressed. The system is
\begin{equation}\label{Sec3_SysK}
\left\{
\begin{aligned}
& dX= \Big\{ AX +\widetilde A \mathbb E[X]
 \Big\} dt+\Big\{Du
+\widetilde D\mathbb E [u] \Big\}dW(t), \quad t\in [0,T],\\
& X(0) = x,
\end{aligned}
\right.
\end{equation}
and the cost functional is
\begin{equation}\label{Sec3_CostK}
\begin{aligned}
J\big(x;u(\cdot)\big) =\ & \mathbb E  \int_0^T \Big[
\big\langle \widetilde
Q\mathbb E [X],\ \mathbb E [X] \big\rangle+\big\langle Ru,\ u \big\rangle +\big\langle \widetilde
R\mathbb E [u],\ \mathbb E [u] \big\rangle \Big] dt\\
& +\mathbb E \big\langle GX(T),\ X(T) \big\rangle +\big\langle \widetilde G
\mathbb E [X(T)],\ \mathbb E [X(T)] \big\rangle,
\end{aligned}
\end{equation}
where the coefficients $\widehat Q(\cdot)<0$, $R(\cdot)\leq 0$, $\widehat R(\cdot)\leq 0$, $G>0$ and $\widetilde G\in\mathbb S^n$. Obviously, this MF-LQ problem is indefinite. If there exists $(H(\cdot),K(\cdot)) \in
\Lambda [0,T] \times \Lambda[0,T]$ satisfy
\begin{equation}\label{Sec3_RC_Ineq1K}
\mbox{(i).} \  \left\{
\begin{aligned}
& \dot H +HA +A^\top H +Q \geq 0,\quad t\in [0,T], \\
& H(T) \leq G, \\
& D^\top HD +R \gg 0, \qquad t\in [0,T]
\end{aligned}
\right.
\end{equation}
and
\begin{equation}\label{Sec3_RC_Ineq2K}
\mbox{(ii).}\ \left\{
\begin{aligned}
& \dot K +K\widehat A +\widehat A^\top K +\widehat Q \geq 0,\quad t\in [0,T], \\
& K(T) \leq \widehat G, \\
& \widehat D^\top H\widehat D +\widehat R \gg 0,\qquad t\in [0,T],
\end{aligned}
\right.
\end{equation}
then $(H(\cdot),K(\cdot))$ is the relaxed compensator.
For the reason of that $H(\cdot)$ does not appear in \eqref{Sec3_RC_Ineq2K}, $H(\cdot)$ can not work on the compensation of $\widehat Q(\cdot)(<0)$, then we have to find another one: $K(\cdot)$, to compensate $\widehat Q(\cdot)$ such that this MF-LQ problem is well-posed.

For giving more details, we simplify the coefficients as constants in system \eqref{Sec3_SysK} and  \eqref{Sec3_CostK}. Although $R$ and $\widehat R$ could be negative, they can not be too negative, say, $R>-D^2G\exp\{2A(T-t)\}$ and $\widehat R>-\widehat D^2G\exp\{2A(T-t)\}$. We choose $H=G\exp\{2A(T-t)\}$ and
$K=-\widetilde Q/2\widehat A+(\widehat G+\widetilde Q/2\widehat A)\exp\{2\widehat A(T-t)\}$, by some calculations, $(H(\cdot),K(\cdot))$ satisfies conditions \eqref{Sec3_RC_Ineq1K}-\eqref{Sec3_RC_Ineq2K}, then $(H(\cdot),K(\cdot))$ is a relaxed compensator, this MF-LQ problem is well-posed.

\end{remark}

\section{Applications}\label{sec5}

\subsection{Mean-variance Portfolio Selection Problem}\label{example1}
In this subsection, a dynamic mean-variance portfolio problem is considered within the framework of indefinite MF-LQ. In the market,  we suppose that there are $m+1$ assets traded continuously under self-financing assumption. 
One asset is risk-free (for example, a default-free bond without coupons), whose price process $S_0(t)$ is governed by the following ordinary differential equation (ODE):
\begin{equation*}
\left\{\begin{aligned}
dS_0(t)&=r(t)S_0(t)dt,~~~t\in[0,T],\\
S_0(0)&=s_0,
    \end{aligned}\right.
\end{equation*}
where $s_0>0$ is the initial price and $r(\cdot)$ is nonnegative bounded function and presents the interest rate of bond. Additionally, the
other $m$ assets are securities (for example, stocks), whose price
processes $S_i(\cdot)$ ($i=1,2,\cdots,m$) satisfy the following SDE:
\begin{equation*}
\left\{\!\begin{aligned}
        dS_i(t)&=S_i(t)\Big\{\mu_i(t)dt+\sum_{j=1}^{m}\sigma_{ij}(t)dW^j(t)\Big\},  ~ t\in[0,T], \\
        S_i(0)&=s_i,
\end{aligned}\right.
\end{equation*}
where $s_i>0$ is the initial price, $\mu(\cdot):=(\mu_1(\cdot),\mu_2(\cdot),\cdots,\mu_m(\cdot))^\top$ with $\mu_i(\cdot)>0$ is
the appreciation rate, and
$\sigma_i(\cdot):=(\sigma_{i1}(\cdot),\sigma_{i2}(\cdot),\cdots$, $\sigma_{im}(\cdot))$ $(i=1,2,\cdots,m)$ is the
volatility of stocks. Define the covariance matrix
$\sigma(\cdot)
:=(\sigma_{ij}(\cdot))_{m\times m}$. Assume that $\mu(\cdot)$ and $\sigma(\cdot)$  are bounded functions. Furthermore, we assume that there exists
a constant $\delta>0$ such that
\[
\sigma(t)\sigma(t)^\top \geq \delta I,~~~\mbox{for all } t\in [0,T],
\]
where $I$ denotes the identity $m\times m$ matrix.

In financial investment, the investor's total wealth 
is denoted by $X(\cdot)$, and the amount of the  wealth invested
in the  $i$-th stock is denoted by $\pi_i(\cdot)$ ($i=1,2,\cdots,m$). Since the strategy $\pi(\cdot):=(\pi_1(\cdot),\pi_2(\cdot),\cdots,\pi_m(\cdot))^\top$ is used in a self-financing way, the
wealth invested in the bond is $X(\cdot)-\sum_{i=1}^m\pi_i(\cdot)$. Then, the wealth
process $X(\cdot)$ with the initial endowment $x$ satisfies the following SDE
\begin{equation*}
    \left\{\begin{aligned}
        dX(t)&=\big[r(t)X(t)+b(t)^\top u(t)\big]dt+u(t)^\top dW(t), \\
        X(0)&=x,
    \end{aligned}\right.
\end{equation*}
 where $x>0$ is the initial wealth, 
$u(t)=\sigma(t)^\top\pi(t)$ and
$b(t)=\sigma(t)^{-1}(\mu(t)-r(t)\mathbf 1)$ for all $t\in[0,~T]$.
Here, $\mathbf 1$ denotes the vector of all entries with $1$ and $W(\cdot)=(W^1(\cdot),W^2(\cdot),\cdots,W^m(\cdot))^\top$ is $m$-dimensional standard Brownian motion.  All the theoretical results established in this paper hold true for $m$-dimensional standard Brownian motion case.

The mean-variance problem means that the investor's objective is to maximize the expected terminal wealth $\mathbb E[X(T)]$ as well as to minimize the variance of the terminal wealth $\mbox{Var}(X(T))$.
 Let $\nu$ be a positive constant. Then, the cost functional is
\begin{equation}\label{J-MV}
    \begin{aligned}
        J(x;u(\cdot))=\frac{\nu}{2}\mbox{Var}(X(T))-\mathbb E[X(T)].
    \end{aligned}
\end{equation}

\noindent\textbf{Problem (MV). } The mean-variance portfolio selection problem is to find an
admissible control $u^*(\cdot)\in\mathscr{U}[0,T]$ satisfying $$
{J}(x;u^*(\cdot))=\inf_{u(\cdot)\in\mathscr{U}[0,T]}{J}(x;u(\cdot)).$$ 
Such an admissible control $u^*(\cdot)$ is called an optimal control,
and $X^*(\cdot)=X^*(\cdot;x, u^*(\cdot))$ is called the corresponding optimal
trajectory.

We deal with
Problem (MV) as a special case of Problem (MF-LQ)$^L$ with indefinite matrices. In this example, 
\eqref{J-MV} can be rewritten as $$
        J(x;u(\cdot))=\frac{\nu}{2}\mathbb E[X^2(T)]-\frac{\nu}{2}\big(\mathbb E[X(T)]\big)^2-\mathbb E[X(T)],$$ then, $\mathbf Q(\cdot)=\mathbf S(\cdot)=\mathbf R(\cdot)=0$, $G=\frac{\nu}{2}$, $\widetilde G=-\frac{\nu}{2}$ and $g=\frac{1}{2}$ . From Corollary \ref{corollary-linear}, we present the closed-loop form of optimal control by the
following proposition.

 \begin{proposition}\sl	
Problem (MV) admits a unique optimal control in the following closed-loop form: 
\[
\begin{aligned}
& u^*(t) = -b(t) \bigg\{ X^*(t) -\mathbb E[X^*(t)]-\frac{1}{\nu} \exp\bigg[ \int_t^T \Big( |b(s)|^2 -r(s) \Big) ds \bigg] \bigg\},\quad t\in [0,T],
\end{aligned}
\]
where $X^*(\cdot)$ satisfies
\[
\left\{
\begin{aligned}
        dX^* &=\  \Big\{\big(r(t)-|b(t)|^2)X^*+|b(t)|^2 \mathbb E[X^*]+\frac{|b(t)|^2}{\nu} \exp\bigg[ \int_t^T \Big( |b(s)|^2 -r(s) \Big) ds \bigg]\Big\}dt\\
                   &-b(t)^\top \bigg\{ X^*(t) -\mathbb E[X^*(t)]-\frac{1}{\nu} \exp\bigg[ \int_t^T \Big( |b(s)|^2 -r(s) \Big) ds \bigg] \bigg\}dW(t),~~~t\in [0,T],\\
         X(0)& =\  x.
\end{aligned}
\right.
\]
\end{proposition}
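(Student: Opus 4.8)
The plan is to recognize Problem~(MV) as a special case of Problem~(MF-LQ)$^L$ (state dimension $n=1$, control and noise dimension $m$), to exhibit a relaxed compensator by solving the associated Riccati system \eqref{Sec2.F_Riccati_Eq1}--\eqref{Sec2.F_Riccati_Eq2} explicitly, and then to read off the optimal feedback and optimal state from Corollary~\ref{corollary-linear}. First I would match the data: writing the wealth equation in the form of \eqref{Sec2_Sys} gives $A=r$, $B=b^\top$, $C=0$ and the control diffusion coefficient $D$ acting as $u\mapsto u^\top$ (so that $D^\top D=I_m$ and $D^\top PD=P\,I_m$ for scalar $P$), with all mean-field coefficients vanishing, hence $\widehat A=r$, $\widehat B=b^\top$, $\widehat C=0$, $\widehat D=D$. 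Rewriting the cost functional as $\frac{\nu}{2}\mathbb E[X^2(T)]-\frac{\nu}{2}(\mathbb E[X(T)])^2-\mathbb E[X(T)]$ identifies $\mathbf Q=\mathbf S=\mathbf R=0$, $G=\frac{\nu}{2}$, $\widetilde G=-\frac{\nu}{2}$ (so $\widehat G=0$), and the linear coefficient $g=-\frac12$ coming from $-\mathbb E[X(T)]=2\langle g,\mathbb E[X(T)]\rangle$. (The abstract theory was stated for scalar noise, but, as the paper notes, the same identities hold for $m$-dimensional noise; the only structural fact used below is that $D^\top PD$ is a scalar multiple of $I_m$.)

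Next I would produce a relaxed compensator. By the characterization stated just after Proposition~\ref{Pro}, it suffices to solve \eqref{Sec2.F_Riccati_Eq1}--\eqref{Sec2.F_Riccati_Eq2}; the solution $(P,\widehat P)$ is then automatically a relaxed compensator. With the data above, $D^\top PD+R=P\,I_m$ and $PB+C^\top PD+S=Pb$, so $\Gamma(t,P)\equiv -b(t)$ independently of $P$, and $\Gamma(P)^\top[D^\top PD+R]\Gamma(P)=P|b|^2$; thus \eqref{Sec2.F_Riccati_Eq1} collapses to the linear ODE $\dot P+(2r-|b|^2)P=0$, $P(T)=\frac{\nu}{2}$, with the strictly positive bounded solution $P(t)=\frac{\nu}{2}\exp\!\big[\int_t^T(2r(s)-|b(s)|^2)\,ds\big]$, for which $D^\top PD+R=P\,I_m\gg0$. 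Likewise $\widehat\Gamma(t,P,\widehat P)=-P^{-1}\widehat P\,b$, so \eqref{Sec2.F_Riccati_Eq2} becomes $\dot{\widehat P}+2r\widehat P-P^{-1}|b|^2\widehat P^2=0$, $\widehat P(T)=0$, whose solution on $[0,T]$ is identically zero, and $\widehat D^\top P\widehat D+\widehat R=P\,I_m\gg0$. Since $r$ and $b$ are bounded, $(P,\widehat P)\in\big(C^1([0,T];\mathbb R)\big)^2\subset\Lambda[0,T]\times\Lambda[0,T]$, so $(H,K):=(P,\widehat P)$ is a relaxed compensator and Corollary~\ref{corollary-linear} (hence Theorems~\ref{th Hamil} and \ref{Th Rel}) applies, giving existence and uniqueness of the optimal control in the asserted feedback form.

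Finally I would assemble the explicit answer. Because $\widehat P\equiv 0$ we have $\widehat\Gamma(P,\widehat P)=0$ and $\widehat A+\widehat B\widehat\Gamma(P,\widehat P)=r$, so the equation for $\varphi$ in Corollary~\ref{corollary-linear} reads $\dot\varphi+r\varphi=0$, $\varphi(T)=-\frac12$, giving $\varphi(t)=-\frac12\exp\!\big[\int_t^T r(s)\,ds\big]$ and therefore $P(t)^{-1}\varphi(t)=-\frac1\nu\exp\!\big[\int_t^T(|b(s)|^2-r(s))\,ds\big]$. Substituting $\Gamma(P)=-b$, $\widehat\Gamma(P,\widehat P)=0$ and $\mathcal D(\varphi)=-P^{-1}b\varphi$ into the feedback formula of Corollary~\ref{corollary-linear} yields exactly the stated $u^*$; substituting $A+B\Gamma(P)=r-|b|^2$, $\widehat A+\widehat B\widehat\Gamma(P,\widehat P)=r$, $C+D\Gamma(P)=-b^\top$, together with $-P^{-1}|b|^2\varphi=\frac{|b|^2}{\nu}\exp\!\big[\int_t^T(|b(s)|^2-r(s))\,ds\big]$, into the optimal-state SDE of the corollary gives the stated dynamics for $X^*$ (and $\mathbb E[X^*]$ follows by taking expectations, solving a linear ODE). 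The only point that needs any care is the bookkeeping for the $m$-dimensional noise and the sign of $g$; once the identity $D^\top PD=P\,I_m$ is used the whole Riccati system degenerates to two scalar ODEs and there is no genuine obstacle — the problem is honestly indefinite ($G>0$, $\widehat G=0$, all running weights zero), yet the relaxed compensator $(P,\widehat P)$ resolves it cleanly.
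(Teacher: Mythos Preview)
Your proposal is correct and follows essentially the same route as the paper: identify the data of Problem~(MV) within the framework of Problem~(MF-LQ)$^L$, solve the Riccati system \eqref{Sec2.F_Riccati_Eq1}--\eqref{Sec2.F_Riccati_Eq2} explicitly to obtain $P(t)=\tfrac{\nu}{2}\exp\!\big[\int_t^T(2r-|b|^2)\,ds\big]$ and $\widehat P\equiv 0$, use this pair as a relaxed compensator, and read off $u^*$ from Corollary~\ref{corollary-linear} after solving the linear ODE for $\varphi$. Your identification $g=-\tfrac12$ is in fact the correct one (consistent with $\varphi(T)=-\tfrac12$ used in the computation), and your explicit handling of $D^\top PD=P\,I_m$ for the $m$-dimensional noise is a minor clarification the paper only alludes to.
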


\begin{proof}
	 The corresponding Riccati equations of Problem (MV) are
\begin{equation*}
	    \left\{
        \begin{aligned}
            &~\dot P(t)+2r(t)P(t)-\vert b(t)\vert^2P(t)= 0,\quad t\in [0,T],\\
            &~P(T)= \frac{\nu}{2},
        \end{aligned}
        \right.
\end{equation*}
and
\begin{equation*}
    \left\{
        \begin{aligned}
            &~\dot {\widehat P}(t)+2r(t)\widehat P(t)-\frac{\vert b(t)\vert^2 \widehat P(t)^2}{P(t)}=0,\quad t\in [0,T],\\
            &~\widehat P(T)=0,
        \end{aligned}
        \right.
    \end{equation*}
which admit the solutions
\begin{equation}\label{PPP}
P(t)=\frac{\nu}{2}\exp\Big(\int_t^T[2r(s)-\vert b(s)\vert^2]ds\Big),\quad t\in [0,T] 
\end{equation}
and
\[
\widehat P(t)=0,\quad t\in [0,T],
\]
respectively. We choose $(P(\cdot), \widehat P(\cdot))$ as a relaxed compensator. By a direct calculation, we have $\Gamma(t,P)=-\frac{1}{P}\cdot b(t) P=-b(t)$, $\widehat\Gamma(t,P,\widehat P)=-\frac{1}{P}\cdot b(t) \widehat P=0$ and $\mathcal D(t,\varphi)=-\frac{1}{P}\cdot b(t) \varphi$, by Corollary  \ref{corollary-linear}, Problem (MV) admits a unique optimal control:
\begin{equation}\label{uuu}
u^*(t)=-b(t)\Big[X^*(t)-\mathbb E[X^*(t)]+\frac{\varphi(t)}{P(t)}\Big],\quad t\in [0,T], 
\end{equation}
where $\varphi(\cdot)$ is the solution to
\begin{equation*}\label{varphi}
\left\{
\begin{aligned}
& \dot \varphi(t)+r(t)\varphi(t)=0,\quad t\in [0,T],\\
& \varphi(T)=-\frac{1}{2}.
\end{aligned}
\right.
\end{equation*}
Explicitly,
\begin{equation}\label{phi}
\varphi(t) = -\frac{1}{2} \exp\bigg\{ \int_t^T  r(s)ds \bigg\},\quad t\in [0,T]. 	
\end{equation}

Substituting \eqref{PPP} and \eqref{phi} into \eqref{uuu} leads to the desired result. ~\hfill $\Box$
\end{proof}

\subsection{An  Example about Problem (MF-LQ)}\label{example2}

In this part, we consider an example about Problem (MF-LQ). In this example, we not only obtain the optimal control, but also obtain the unique solvability of a kind of MF-FBSDE not satisfying the monotonicity condition in \cite{Bensoussan-Yam-Zhang-2015}. Consider the following system 
\begin{equation*}\label{ex2 sys}
\left\{\begin{aligned}
        dX(t)&=\big\{a(t)X(t)+\tilde a(t)\mathbb E[X(t)]+b(t)u(t)+\tilde b(t)\mathbb E[u(t)]\big\}dt+u(t)dW(t),\quad t\in [0,T],\\
        X(0)&=x,
    \end{aligned}\right.
\end{equation*}
and the cost functional
\begin{equation*}
    \begin{aligned}
        J(x;u(\cdot))&=\mathbb E\int_0^T\Big\{\alpha \big|X(t)-\mathbb E[X(t)]\big|^2-\beta |u(t)|^2\Big\}dt+\gamma\mathbb E[X^2(T)],
    \end{aligned}
\end{equation*}
where $x\in\mathbb R$, $a(t)$, $\tilde a(t)$, $b(t)$, $\tilde b(t)$ are $1$-dimensional deterministic functions, and $\alpha$, $\beta$, $\gamma$ are  constants. The coefficients satisfy $\alpha\geq0$, $\gamma>\max\{\beta,0\}$ and $a(t) \geq (b(t)^2 \gamma)/(2(\gamma -\beta))$. Denote $\hat a(s)=a(s)+\tilde a(s)$ and $\hat b(s)=b(s)+\tilde b(s)$. The objective of this  problem is to find an admissible
control $u^*(\cdot) \in \mathscr{U}[0,T]$ such that
\begin{equation*}
J\big(x;u^*(\cdot)\big) = \inf_{u(\cdot)\in \mathscr{U}[0,T]}J\big(x;u(\cdot) \big).
\end{equation*}
  When $\beta<0$, this MF-LQ problem is under positive definite case, no more tautology here. We mainly discuss the indefinite case. We verify that $H(t) = \gamma$ and 
\[
\begin{aligned}
K(t) =\ & \bigg[ \frac 1 \gamma \exp\bigg\{ -2\int_t^T \hat a(s) ds \bigg\}+\int_t^T \frac{\hat b(s)^2}{\gamma-\beta} \exp\bigg\{ -2\int_t^s \hat a(\tau)d\tau \bigg\} ds \bigg]^{-1}
\end{aligned}
\]
constitute a relaxed compensator. Therefore, this MF-LQ problem is well-posed.


By Theorem \ref{th Hamil},
this MF-LQ
problem admits a unique solution satisfying the stochastic Hamiltonian system
\begin{equation}\label{Hsystem e2}
\left\{
    \begin{aligned}
    &0=-\beta u^*(t)+b(t) Y(t)+\tilde b(t)\mathbb E[Y(t)]+Z(t), \\
        &dX^*(t)=\big\{a(t)X^*(t)+\tilde a(t)\mathbb E[X^*(t)]+b(t)u(t)+\tilde b(t)\mathbb E[u(t)]\big\}dt+u(t)dW(t),\quad t\in [0,T], \\
        &dY(t)=-\big\{a(t)Y(t)+\tilde a(t)\mathbb E[Y(t)]+\alpha X^*(t)\big\}+Z(t)dW(t),\quad t\in [0,T], \\
        &X(0)=x,~~~Y(T)=\gamma X(T).
    \end{aligned}
\right.
\end{equation}

From the relationship \eqref{Sec2.F_Relat_YZ_Xu} in Proposition \ref{pro34}, we decouple equation  \eqref{Hsystem e2} as follows 
 \begin{equation}\label{example-yz}
\left\{
\begin{aligned}
& Y(t) = P(t) \big( X^*(t) -\mathbb E[X^*(t)] \big) +\widehat P(t) \mathbb E[X^*(t)],\\
&\mathbb E[Y(t)]=\widehat P(t) \mathbb E[X^*(t)],\\
& Z(t) = P (t)u^*(t) ,~~~\mathbb E[Z(t)]= P (t)\mathbb E[u^*(t)],\\
\end{aligned}
\right.
\end{equation}
where $(P, \widehat P)$  is the unique solution to the following pair of Riccati equations 
\begin{equation}\label{R P i}
    \left\{\begin{aligned}
            &~\dot P(t)+2a(t)P(t)+\alpha-\frac{\vert b(t)\vert^2 P(t)^2}{ P(t)-\beta}=0,\\
            &~P(T)= \gamma,~~~~P(t)-\beta>0,
        \end{aligned}\right.
\end{equation}
and
\begin{equation}\label{R P ii}
    \left\{\begin{aligned}
            &~\dot {\widehat P}(t)+  2\hat a(t) \widehat P(t)+\alpha-\frac{\vert \hat b(t)\vert^2 \widehat P(t)^2}{ P(t)-\beta}=0,\quad t\in [0,T],
            \\
            &~\widehat P(T)=\gamma.
        \end{aligned}\right.
\end{equation}
Putting \eqref{example-yz} into the first equation in \eqref{Hsystem e2} yields 
\[
\begin{aligned}
	-\beta u^*(t)+&b(t) [P(t) \big( X^*(t) -\mathbb E[X^*(t)] \big) +\widehat P(t) \mathbb E[X^*(t)]]+\tilde b(t)\widehat P(t) \mathbb E[X^*(t)]+P u^*(t)=0,
	\end{aligned}
\]
then the optimal control can be presented by 
\begin{equation}\label{u-x-pp}
\begin{aligned}
            u^*(t)&=-\frac{1}{P(t)-\beta}\big[b(t)P(t)\big(X^*(t)-\mathbb E[X^*(t)]\big)+\big(b(t)+\tilde b(t)\big)\widehat P(t)\mathbb E[X^*(t)]\big)\big],
            \end{aligned}
\end{equation}
where $X^*(\cdot)$ satisfies the following equation
\begin{equation}\label{u and x}
\left\{
    \begin{aligned}
       & dX^*(t)=\Big\{\big[a(t)-\frac{\vert b(t)\vert^2P(t)}{P(t)-\beta}\big]X^*(t)+\big[\tilde a(t)-\frac{1}{P(t)-\beta}\big(\vert b(t)+\tilde b(t)\vert^2\widehat P(t)\\
        &\quad-\vert b(t)\vert^2P(t)\big)\big]\mathbb E[X(t)]\Big\}dt-\frac{1}{P(t)-\beta}\big\{b(t)P(t)\big(X^*(t)-\mathbb E[X^*(t)]\big)\\
        &\quad+\big(b(t)+\tilde b(t)\big)\widehat P(t)\mathbb E[X^*(t)]\big)\big\}dW(t),\quad t\in [0,T], \\
        &X^*(0)=x.
    \end{aligned}
\right.
\end{equation}
We can see that the optimal control $u^*$ is determined by the system states $X^*(\cdot)$, $\mathbb E[X^*(\cdot)]$ and the solutions $P(\cdot)$, $\widehat P(\cdot)$ of Riccati equations.

 Moreover, 
combining  \eqref{example-yz} with \eqref{u-x-pp}, the unique solution $(Y,\mathbb E[Y],Z,\mathbb E[Z])$ of MF-FBSDE \eqref{e2 fbsde1}  can be represented as
 \begin{equation}\label{example-yz-Pu}
\left\{
\begin{aligned}
& Y(t) = P(t) \big( X^*(t) -\mathbb E[X^*(t)] \big) +\widehat P(t) \mathbb E[X^*(t)],\\
&\mathbb E[Y(t)]=\widehat P(t) \mathbb E[X^*(t)],\\
& Z(t) = -\frac{P(t)}{P(t)-\beta}\big[b(t)P(t)\big(X^*(t)-\mathbb E[X^*(t)]\big)+\big(b(t)+\tilde b(t)\big)\widehat P(t)\mathbb E[X^*(t)]\big)\big],\\
        &\mathbb E[Z(t)] = -\frac{P(t)}{P(t)-\beta}\big(b(t)+\tilde b(t)\big)\widehat P(t)\mathbb E[X^*(t)],\\
\end{aligned}
\right. 
\end{equation}
which also can be expressed by 
$P(\cdot)$, $\widehat P(\cdot)$, $X^*(\cdot)$ and $\mathbb E[X^*(\cdot)]$. In fact, \eqref{u and x} and \eqref{example-yz-Pu} provide an effective way for solving MF-FBSDE \eqref{Hsystem e2}.

In addition, we would like to discuss more about Hamiltonian system \eqref{Hsystem e2} with cases $\beta > 0$ and $\beta=0$.

 {\bf Case I:} When $\beta > 0$, Hamiltonian system \eqref{Hsystem e2} is rewritten as 
\begin{equation}\label{e2 fbsde1}
\left\{\begin{aligned}
        dX^*(t)&=\bigg\{a(t)X^*(t)+\tilde a(t)\mathbb E[X^*(t)]+\frac{b(t)}{\beta}\big[b(t)Y(t)+\tilde b(t)\mathbb E[Y(t)]+Z(t)\big]\\
       &~~~+\frac{\tilde b(t)}{\beta}\big[(b(t)+\tilde b(t))\mathbb E[Y(t)]+\mathbb E[Z(t)]\big]\bigg\}dt+\frac{1}{\beta}\big\{b(t)Y(t)+\tilde b(t)\mathbb E[Y(t)]\\
        &~~~+Z(t)\big\}dW(t), \quad t\in [0,T], \\
        dY(t)&=-\big\{a(t)Y(t)+\tilde a(t)\mathbb E[Y(t)]+\alpha X^*(t)\big\}dt\\
        &~~~+Z(t)dW(t), \quad t\in [0,T], \\
        X(0)&=x,~~~Y(T)=\gamma X(T).
    \end{aligned}\right.
\end{equation}
 
It is obvious that MF-FBSDE \eqref{e2 fbsde1} does not
satisfy the  monotonicity condition in \cite{Bensoussan-Yam-Zhang-2015}. Based on the above discussion, it follows from Theorem
\ref{th Hamil} that equation \eqref{e2 fbsde1} admits a unique solution. Moreover, the optimal control 
 \begin{equation}\label{example-u}
	u^*(t)=\frac{1}{\beta}\big[b(t) Y(t)+\tilde b(t)\mathbb E[Y(t)]+Z(t)\big]
\end{equation}
 can be expressed by $(Y(\cdot),\mathbb E[Y(\cdot)],Z(\cdot))$  in terms of \eqref{example-yz-Pu}. In fact, \eqref{example-u} is equivalent to \eqref{u-x-pp}. 
 
 {\bf Case II:}  When $\beta=0$, the Hamiltonian system \eqref{Hsystem e2} can be reduced to the following MF-FBSDE
\begin{equation}\label{e2 fbsde2}
\left\{
    \begin{aligned}
        dX(t)&=\{a(t)X(t)+\tilde a(t)\mathbb E[X(t)]+b(t)u(t)+\tilde b(t)\mathbb E[u(t)]\}dt+u(t)dW(t),\quad t\in [0,T],\\
        dY(t)&=-\big\{a(t)Y(t)+\tilde a(t)\mathbb E[Y(t)]+\alpha X(t)\big\}dt-\big\{b(t)Y(t)+\tilde b(t)\mathbb E[Y(t)]\big\}dW(t),~~ t\in [0,T],\\
        X(0)&=x,~~~Y(T)=\gamma X(T).
    \end{aligned}
\right.
\end{equation}
In \eqref{e2 fbsde2}, there are three unknown processes
$X(\cdot),Y(\cdot),\\u(\cdot)$, and the diffusion of the backward equation
depending on $Y(\cdot)$ and $\mathbb E [Y(\cdot)]$ while not  $Z(\cdot)$. This implies that \eqref{e2 fbsde2} is not
a classic FBSDE. To the best of our knowledge, this kind of equations are largely underexplored.
In this paper, because of the presence of
relaxed compensator, from Theorem \ref{th Hamil}, MF-FBSDE \eqref{e2
fbsde2} admits a unique solution. Moreover, the state solution $X^*(\cdot)$ is presented in \eqref{u and x}, $Y(\cdot)$ is solved by \eqref{example-yz-Pu}, and the optimal feedback $u^*(\cdot)$ is in the form of \eqref{u-x-pp} with $\beta=0$. 

For illustrating intuitively, we give simulations of numerical solutions by the Fig. 1. Taking $T=1$, $x=1$, $a=0.8$, $\tilde a=0.6$, $b=0.4$, $\tilde b=0.1$, $\alpha=0.5$, $\beta=0.2$ and $\gamma=1$ 
. Fig.1 (a) shows the numerical solutions $P$ and $\widehat P$ of Riccati equations \eqref{R P i}-\eqref{R P ii}, which are solved by Euler's method; Fig.1 (b) shows the optimal state $X^*$ and mean-value $\mathbb E[X^*]$; Fig.1 (c) presents the optimal control $u^*$ determined by $P$, $\widehat P$, $X^*$ and $\mathbb E[X^*]$. Moreover, $Y$, $\mathbb E[Y]$, $Z$ and $\mathbb E[Z]$ in Fig.1 (d)-(e) are described by $P$, $\widehat P$, $X^*$ and $\mathbb E[X^*]$ in Fig.1 (a)-(b). 

\begin{figure}[htbp]
\centering

\subfigure[]{
\begin{minipage}[t]{0.35\linewidth}
\centering
\includegraphics[width=2.2in]{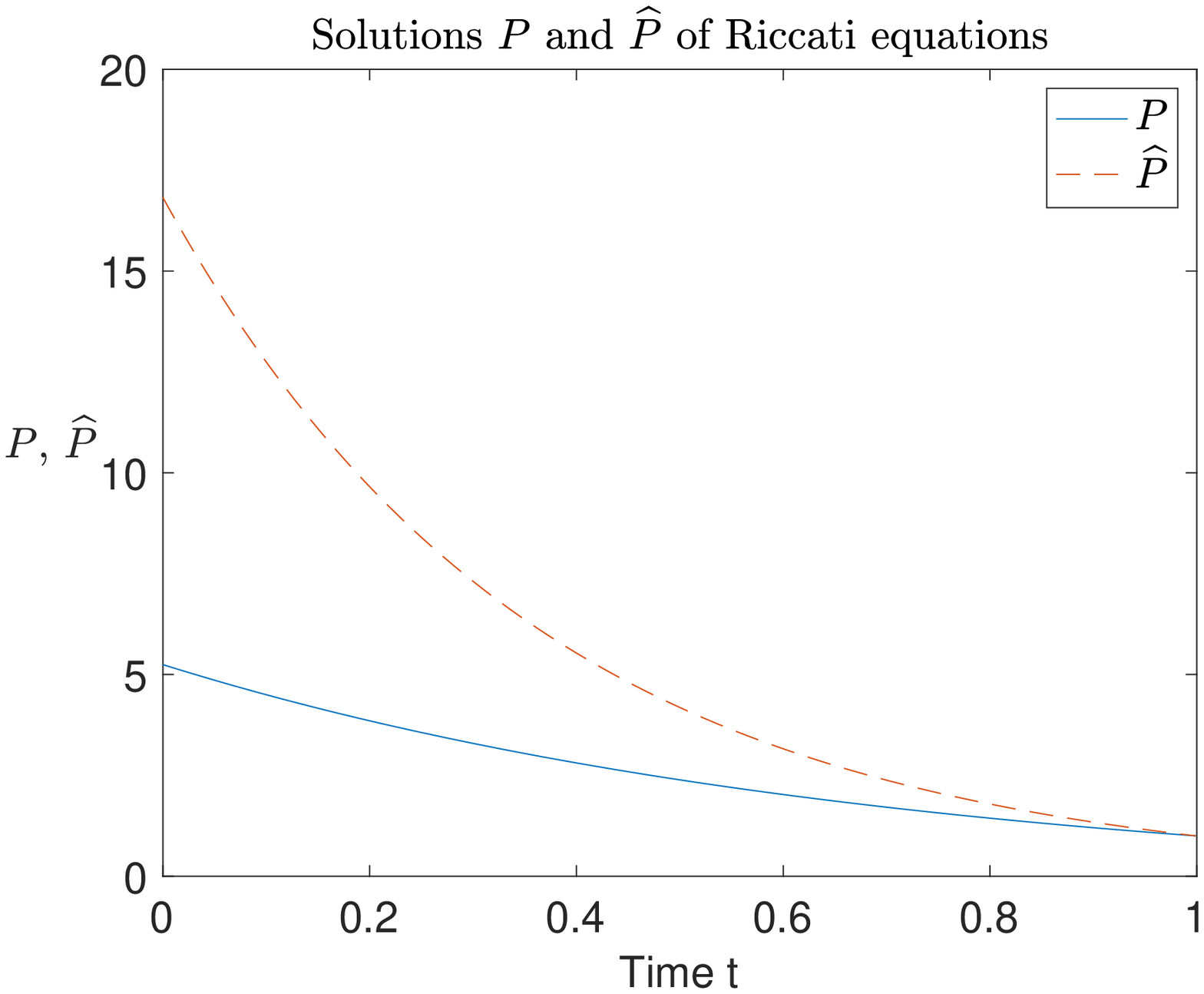}
\end{minipage}%
}%
\subfigure[]{
\begin{minipage}[t]{0.35\linewidth}
\centering
\includegraphics[width=2.2in]{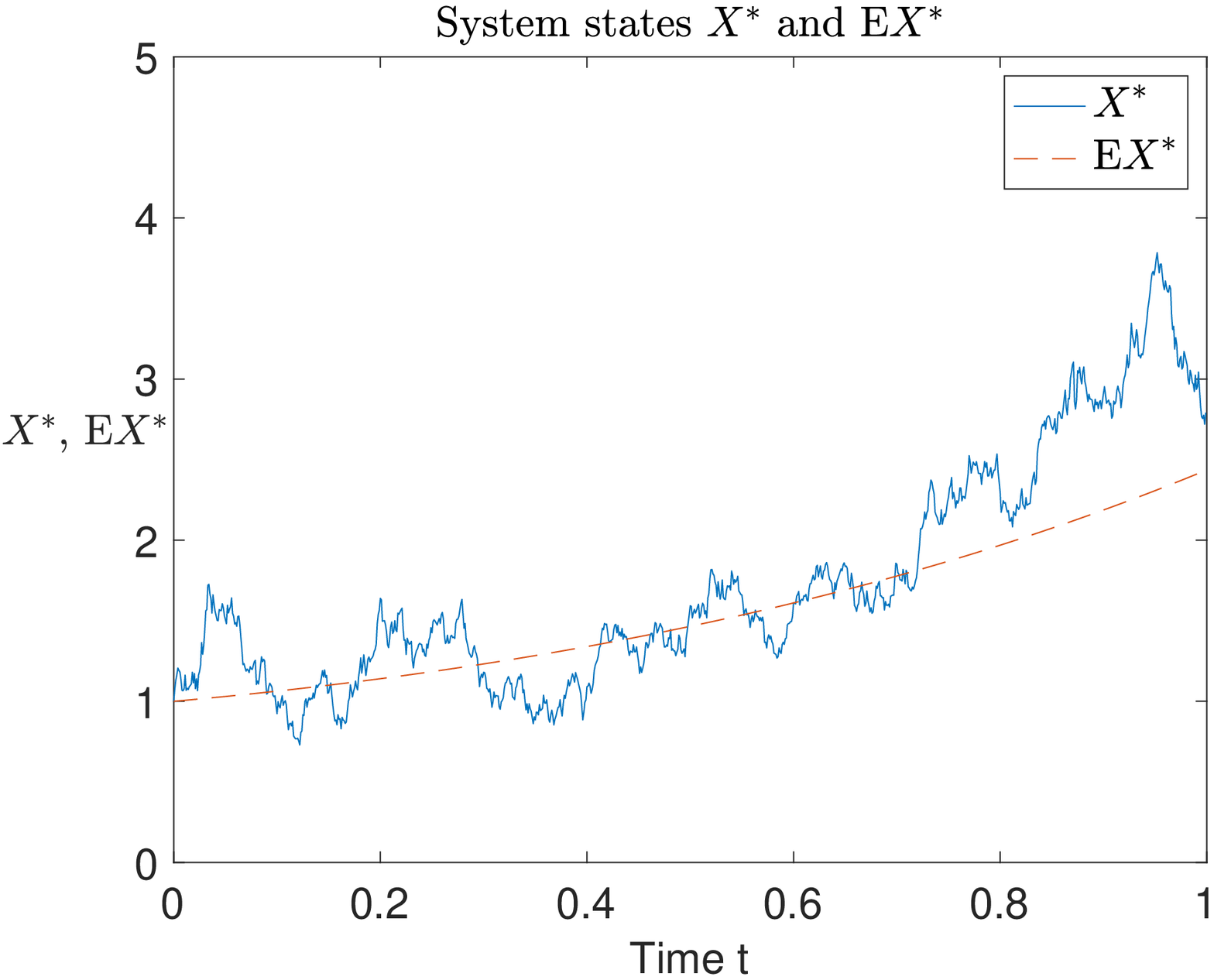}
\end{minipage}%
}%

\subfigure[]{
\begin{minipage}[t]{0.35\linewidth}
\centering
\includegraphics[width=2.2in]{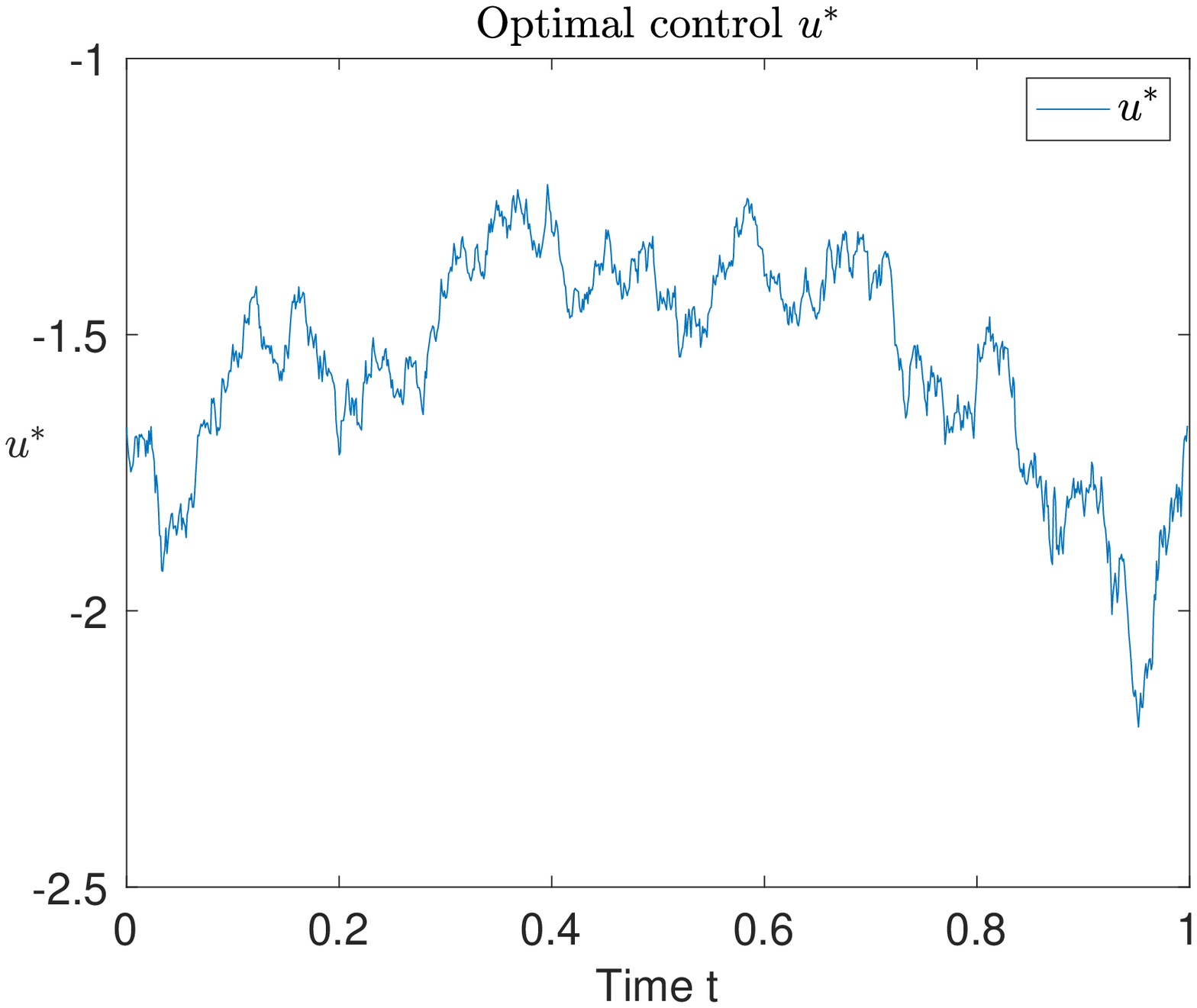}
\end{minipage}
}%
\subfigure[]{
\begin{minipage}[t]{0.35\linewidth}
\centering
\includegraphics[width=2.2in]{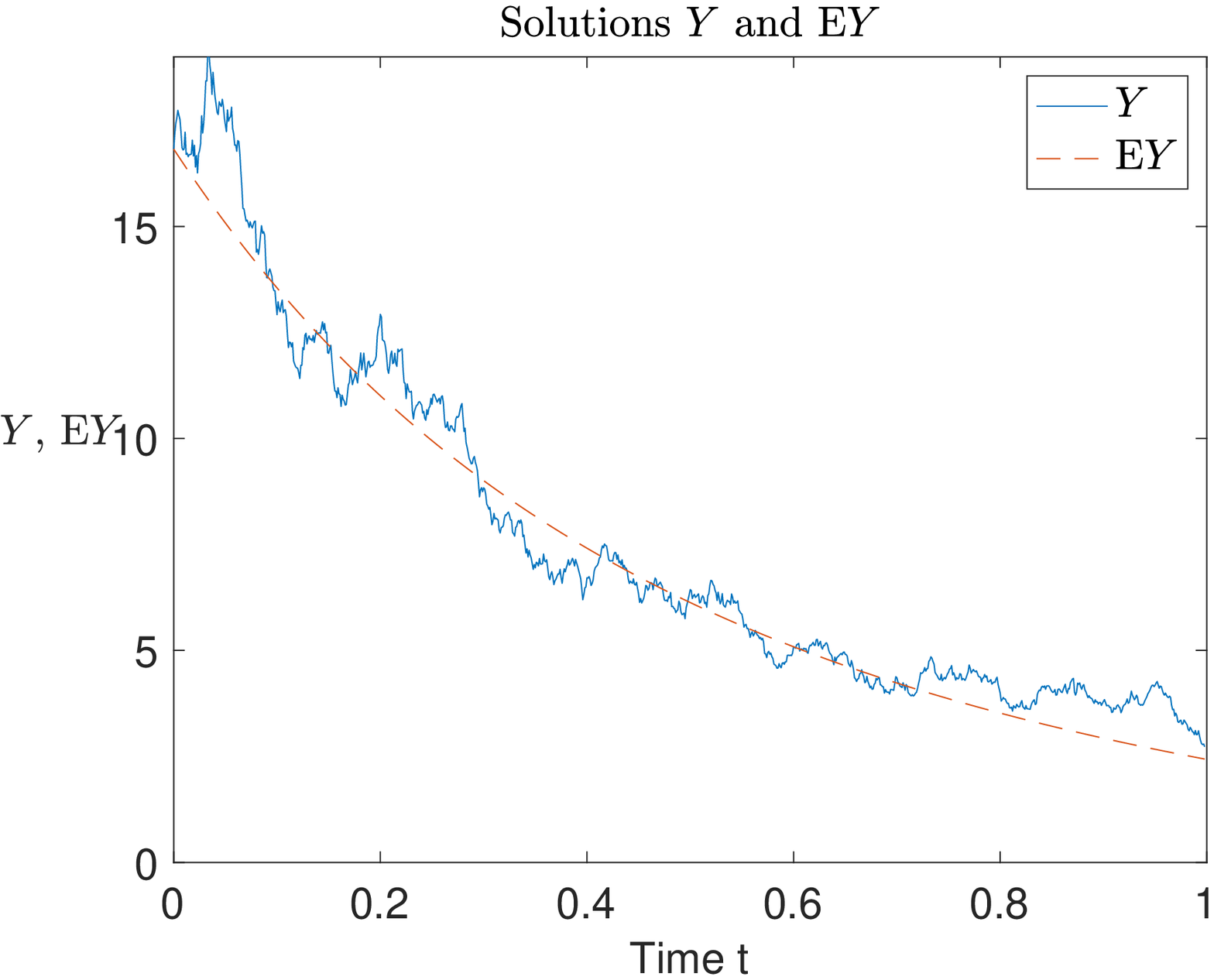}
\end{minipage}
}%
\subfigure[]{
\begin{minipage}[t]{0.35\linewidth}
\centering
\includegraphics[width=2.2in]{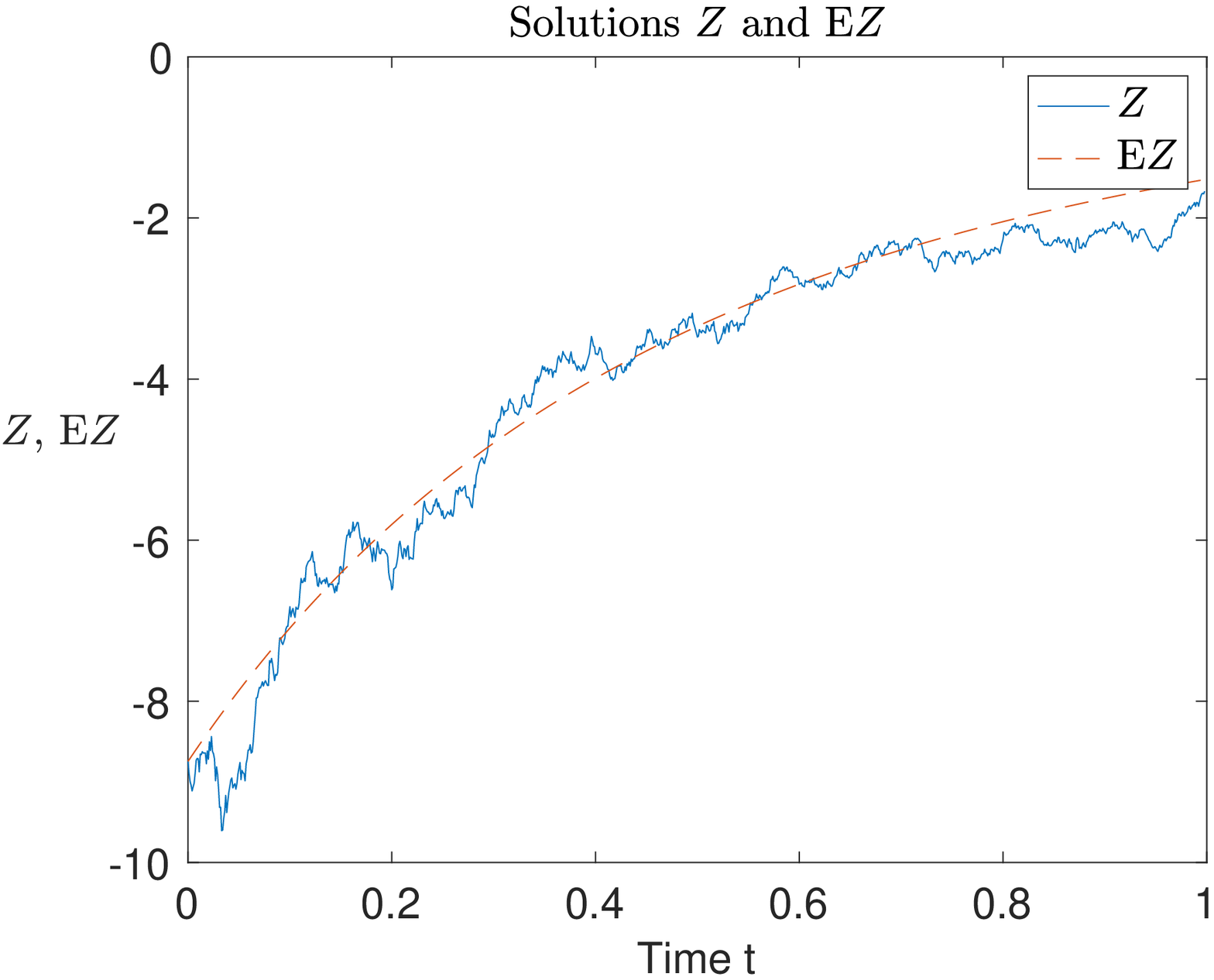}
\end{minipage}
}%
\centering
\caption{ Simulation results for solutions. (a): Solutions  $P$ and $\widetilde P$ of Riccati equations; (b):  Optimal system states $X^*$ and $\mathbb EX^*$; (c): Optimal control $u^*$; (d): Solutions $Y$, $\mathbb EY$; (e): Solutions $Z$ and $\mathbb EZ$.}
\end{figure}

 Let $\alpha\geq0$, $\gamma>\max\{\beta,0\}$ and $a(t) \geq (b(t)^2 \gamma)/(2(\gamma -\beta))$ as before. From another viewpoint, when $\gamma > \beta$, one can check the cost functional is uniformly convex in the control variable, which leads to the unique existence of optimal control. However, the uniform convexity is broken when $\gamma \leq \beta$. One can refer to Sun \cite{Sun-2017} for more details on this viewpoint.

\subsection{An example with negative definite cost weighting of control}\label{example3}

In this subsection, we study Example 1.2 presented in
Section 1. Firstly, we obtain the optimal controls in open-loop
form and closed-loop, respectively. Secondly, the
explicit solutions of MF-FBSDE and Riccati equations
are presented. Consider the following system
\begin{equation*}
\left\{
    \begin{aligned}
        dX(t)&=\big\{\alpha(t)X(t)+\widetilde \alpha(t)\mathbb E[X(t)]\big\}dt+\beta(t)u(t)dW(t), \\
        X(0)&=x,
    \end{aligned}
    \right.
\end{equation*}
and the cost functional
\begin{equation*}
    \begin{aligned}
        J(x;u(\cdot))=\mathbb E\int_0^T\big\{\gamma(t)X^2(t)+\widetilde\gamma(t)(\mathbb E[X(t)])^2-\theta(t) u^2(t)\big\}dt+G\mathbb E[X^2(T)].
    \end{aligned}
\end{equation*}
Here, assume that all the coefficients are deterministic.
Moreover, $\alpha(\cdot)$, $\widetilde\alpha(\cdot)$,
$\beta(\cdot)$, $\widetilde\beta(\cdot)$, $\gamma(\cdot)$,
$\widetilde\gamma(\cdot)$ are non-negative. In particular, $\theta(\cdot)$ is
positive but not be too large, and satisfies
\[
\theta(t)<G\beta^2(t) e^{\int_0^T2\alpha(s)ds}-\int_0^T\gamma(s)e^{\int_t^s2\alpha(\tau)d\tau}ds,
\]
and $G$ is non-negative.
%
The corresponding Riccati equations follow
\begin{equation*}\label{P i1}
\left\{\begin{aligned}
            &~\dot P(t)+2\alpha(t)P(t)+\gamma(t)=0,\\
            &~P(T)= G,\\
            &~\beta^2(t)P(t)-\theta(t)>0,
\end{aligned}\right.
\end{equation*}
and
\begin{equation*}\label{hatP ii1}
\left\{\begin{aligned}
            &~\dot {\widehat P}(t)+2[\alpha(t)+\widetilde \alpha(t)]\widehat P(t)+\gamma(t)+\widetilde\gamma(t)=0,
            \\
            &~\widehat P(T)= G.
\end{aligned}\right.
\end{equation*}
A short calculation yields
\begin{equation*}
    P(t)=G e^{\int_0^T2\alpha(s)ds}-\int_0^T\gamma(s)e^{\int_t^s2\alpha(\tau)d\tau}ds,
\end{equation*}
and
\begin{equation*}
    \widehat P(t)=G e^{\int_0^T2(\alpha(s)+\widetilde\alpha(s))ds}-\int_0^T(\gamma(s)+\widetilde\gamma(s))e^{\int_t^s2(\alpha(\tau)+\widetilde\alpha(\tau))d\tau}ds.
\end{equation*}
We choose $(P(\cdot),\widehat P(\cdot))$ as the relaxed compensator. This problem is well-posed.

From Theorem \ref{Th Rel}, the closed-loop optimal control is taken by $u^*(t)=0$. Also,
from Theorem \ref{th Hamil}, the open-loop optimal control can be presented by
\begin{equation}\label{ue3open}
    u^*(t)=\frac{\beta(t)}{\theta(t)}Z(t),
\end{equation}
where $Z(\cdot)$ is determined by
\begin{equation}\label{FBSDEe3}
\left\{\begin{aligned}
        dX(t)&=\big\{\alpha(t)X(t)+\widetilde \alpha(t)\mathbb E[X(t)]\big\}dt+\frac{\beta^2(t)}{\theta(t)}Z(t)dW(t),\\
        dY(t)&=-\big\{\alpha(t)Y(t)+\widetilde\alpha(t)\mathbb E[Y(t)]+\gamma(t)X(t)+\widetilde\gamma(t)\mathbb E[X(t)]\big\}dt+Z(t)dW(t),\\
        X(0)&=x,~~~Y(T)=G X(T).
\end{aligned}\right.
\end{equation}
Comparing two forms of optimal control, we get $Z(\cdot)=0$.

Next, we solve $\mathbb E[X(t)] $ and $X(t)$ from \eqref{FBSDEe3}, there are
\begin{equation*}
    \mathbb E [X(t)]=xe^{\int_0^t\big(\alpha(s)+\widetilde\alpha(s)\big)ds}
\end{equation*}
and
\begin{equation}\label{xe3}
     X(t)=xe^{\int_0^t\alpha(s)ds}\big(1+\int_0^t\widetilde\alpha(s)e^{\int_0^s\widetilde\alpha(\tau)d\tau}ds\big).
\end{equation}
It follows from \eqref{Sec2.F_Relat_YZ_Xu} in Proposition \ref{pro34} that
\begin{equation}\label{ye3}
\begin{aligned}
    Y(t)&=\big(G e^{\int_0^T2\alpha(s)ds}-\int_0^T\gamma(s)e^{\int_t^s2\alpha(\tau)d\tau}ds\big)(X(t)-\mathbb E[X(t)])\\
    &~~~+\big(G e^{\int_0^T2(\alpha(s)+\widetilde\alpha(s))ds}-\int_0^T(\gamma(s)+\widetilde\gamma(s))e^{\int_t^s2(\alpha(\tau)+\widetilde\alpha(\tau))d\tau}ds\big)\mathbb E[X(t)]\\
    &=xe^{\int_0^t\alpha(s)ds}\big(G e^{\int_0^T2\alpha(s)ds}-\int_0^T\gamma(s)e^{\int_t^s2\alpha(\tau)d\tau}ds\big)\big(1+\int_0^t\widetilde\alpha(s)e^{\int_0^s\widetilde\alpha(\tau)d\tau}ds-e^{\int_0^t\widetilde\alpha(s)ds}\big)\\
    &~~~+xe^{\int_0^t\big(\alpha(s)\widetilde\alpha(s)\big)ds}+\big(G e^{\int_0^T2(\alpha(s)+\widetilde\alpha(s))ds}-\int_0^T(\gamma(s)+\widetilde\gamma(s))e^{\int_t^s2(\alpha(\tau)+\widetilde\alpha(\tau))d\tau}ds\big).
    \end{aligned}
\end{equation}
Now, it follows from \eqref{xe3}, \eqref{ye3} and $Z(\cdot)=0$ that $(X(\cdot),Y(\cdot),Z(\cdot))$ is the solution to \eqref{FBSDEe3}.

\end{document}